\newtheorem{thm}{Theorem}[section]
\newtheorem{lem}[thm]{Lemma}
\newtheorem{prop}[thm]{Proposition}
\newtheorem{cor}[thm]{Corollary}
\theoremstyle{definition}
\newtheorem{dfn}[thm]{Definition}
\newtheorem{ques}[thm]{Question}
\newtheorem{rem}[thm]{Remark}
\newtheorem{ex}[thm]{Example}
\theoremstyle{remark}
\newtheorem*{claim*}{Claim}
\newtheorem*{ac}{Acknowlegments}
\newtheorem*{conv}{Convention}
\numberwithin{equation}{thm}
\def\ass{\operatorname{Ass}}
\def\c{\mathfrak{c}}
\def\cm{\operatorname{\mathsf{CM}}}
\def\depth{\operatorname{depth}}
\def\Ext{\operatorname{Ext}}
\def\ext{\operatorname{\mathsf{ext}}}
\def\fl{\operatorname{\mathsf{fl}}}
\def\ge{\geqslant}
\def\gr{\operatorname{gr}}
\def\height{\operatorname{ht}}
\def\Hom{\operatorname{Hom}}
\def\I{\mathcal{I}}
\def\image{\operatorname{Im}}
\def\kernel{\operatorname{Ker}}
\def\le{\leqslant}
\def\m{\mathfrak{m}}
\def\Max{\operatorname{Max}}
\def\Min{\operatorname{Min}}
\def\mod{\operatorname{\mathsf{mod}}}
\def\N{\mathbb{N}}
\def\p{\mathfrak{p}}
\def\q{\mathfrak{q}}
\def\rank{\operatorname{rank}}
\def\spec{\operatorname{Spec}}
\def\supp{\operatorname{Supp}}
\def\syz{\Omega}
\def\t{\mathrm{t}}
\def\Tor{\operatorname{Tor}}
\def\tf{\operatorname{\mathsf{tf}}}
\def\tr{\operatorname{\mathsf{Tr}}}
\def\V{\operatorname{V}}
\def\X{\mathcal{X}}
\def\Y{\mathcal{Y}}
\begin{document}
\baselineskip 12.44pt
\allowdisplaybreaks
\title{When is a subcategory Serre or torsionfree?}
\author{Kei-ichiro Iima}
\address[Kei-ichoro Iima]{Department of Liberal Studies, National Institute of Technology, Nara College, 22 Yata-cho, Yamatokoriyama, Nara 639-1080, Japan}
\email{iima@libe.nara-k.ac.jp}
\author{Hiroki Matsui}
\address[Hiroki Matsui]{Department of Mathematical Sciences, Faculty of Science and Technology, Tokushima University, 2-1 Minamijosanjima-cho, Tokushima 770-8506, Japan}
\email{hmatsui@tokushima-u.ac.jp}
\author{Kaori Shimada}
\address[Kaori Shimada]{Department of Liberal Studies, National Institute of Technology, Nara College, 22 Yata-cho, Yamatokoriyama, Nara 639-1080, Japan}
\email{shimada@libe.nara-k.ac.jp}
\author{Ryo Takahashi}
\address[Ryo Takahashi]{Graduate School of Mathematics, Nagoya University, Furocho, Chikusaku, Nagoya 464-8602, Japan}
\email{takahashi@math.nagoya-u.ac.jp}
\urladdr{https://www.math.nagoya-u.ac.jp/~takahashi/}
\thanks{2020 {\em Mathematics Subject Classification.} 13C60}
\thanks{{\em Key words and phrases.} Serre subcategory, torsionfree subcategory, IKE-closed subcategory, numerical semigroup ring}
\thanks{The second author was partly supported by JSPS Grant-in-Aid for JSPS Fellows 19J00158. The fourth author was partly supported by JSPS Grant-in-Aid for Scientific Research 19K03443.}
%\dedicatory{}
\begin{abstract}
Let $R$ be a commutative noetherian ring.
Denote by $\mod R$ the category of finitely generated $R$-modules.
In the present paper, we first provide various sufficient (and necessary) conditions for a full subcategory of $\mod R$ to be a Serre subcategory, which include several refinements of theorems of Stanley and Wang and of Takahashi with simpler proofs.
Next we consider when an IKE-closed subcategory of $\mod R$ is a torsionfree class.
We investigate certain modules out of which all modules of finite length can be built by taking direct summands and extensions, and then we apply it to show that the IKE-closed subcategories of $\mod R$ are torsionfree classes in the case where $R$ is a certain numerical semigroup ring.
\end{abstract}
\maketitle
\tableofcontents
%%%%%%%%%%%%%%%%%%%%%%%%%%%%%%%%%%%%%%%%%%%
\section{Introduction}

Let $R$ be a commutative noetherian ring and denote by $\mod R$ the category of finitely generated $R$-modules.
Various full subcategories of $\mod R$ have been studied so far.
Gabriel \cite{G} completely classified the Serre subcategories of $\mod R$ by establishing an explicit one-to-one correspondence between them and the specialization-closed subsets of $\spec R$.
Takahashi \cite{wide} showed that a wide subcategory of $\mod R$ is Serre.
Stanley and Wang \cite{SW} proved that a torsion class and a narrow subcategory of $\mod R$ are Serre, which extends Takahashi's result.
The first main result of the present paper is Theorem \ref{15}, which provides many kinds of sufficient (and necessary) conditions for a given subcategory of $\mod R$ to be Serre.
The following theorem is part of it, which extends the result of Stanley and Wang mentioned above; recall that a full subcategory $\X$ of $\mod R$ is said to be {\em tensor-ideal} if $M\otimes X\in\X$ for all $M\in\mod R$ and $X\in\X$.

\begin{thm}\label{i1}
Let $R$ be a commutative noetherian ring.
Let $\X$ be a tensor-ideal subcategory of $\mod R$ which is closed under direct summands and extensions.
Then $\X$ is a Serre subcategory of $\mod R$.
\end{thm}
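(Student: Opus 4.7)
The plan is to invoke Gabriel's classification of Serre subcategories of $\mod R$ via the specialization-closed subsets of $\spec R$. Set $V := \bigcup_{X \in \X}\supp X$, a specialization-closed set (union of closed subsets), and let $\X_V := \{M \in \mod R : \supp M \subseteq V\}$ be the associated Serre subcategory. The goal is to prove $\X = \X_V$.

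The containment $\X \subseteq \X_V$ is immediate. For the converse, every $M \in \X_V$ admits a prime filtration whose cyclic factors $R/\p_i$ satisfy $\p_i \in \supp M \subseteq V$; by extension-closure, the problem reduces to showing $R/\p \in \X$ for every $\p \in V$. Using the identity $R/\q \otimes_R R/\p = R/\p$ for $\q \subseteq \p$ together with tensor-ideal closure, this reduces further to the claim: \emph{for every $X \in \X$ and every $\q$ minimal in $\supp X$, $R/\q \in \X$}. I would prove this by noetherian induction on the closed subset $\supp X \subseteq \spec R$. When $\supp X = \{\m\}$ with $\m$ a maximal ideal, $X/\m X = R/\m \otimes_R X \in \X$ is a nonzero $R/\m$-vector space (Nakayama), hence isomorphic to $(R/\m)^n$ with $n \ge 1$, and direct-summand closure yields $R/\m \in \X$.

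The main obstacle is the inductive step when $\q$ is minimal but non-maximal in $\supp X$, particularly in the sub-case $\supp X = \V(\q)$ irreducible, where the primary induction does not automatically advance. Two helpful ingredients are: (i) applying the inductive hypothesis to the modules $X/\p X \in \X$ for $\p \supsetneq \q$ in $\supp X$ (whose support $\V(\p)$ is strictly smaller than $\supp X$) gives $R/\p \in \X$ for all such $\p$, hence every finitely generated module supported in $\V(\q) \setminus \{\q\}$ lies in $\X$ by prime filtration and extensions; (ii) since $\q$ is minimal in $\supp X$, the localization $X_\q$ is a nonzero Artinian $R_\q$-module, so $\dim_{k(\q)}(X_\q/\q X_\q) \ge 1$ by Nakayama, i.e., $X/\q X$ has positive generic rank as an $R/\q$-module. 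I would then attack the remaining assembly via the evaluation pairing $\Hom_{R/\q}(X/\q X, R/\q) \otimes_{R/\q} X/\q X \to R/\q$, whose image is a nonzero trace ideal $\tau \subseteq R/\q$: using the short exact sequence $0 \to \tau \to R/\q \to R/\q/\tau \to 0$, together with the fact that $R/\q/\tau$ is supported in $\V(\q) \setminus \{\q\}$ (hence in $\X$ by (i)), extension-closure will yield $R/\q \in \X$ provided one can place $\tau$ in $\X$. Producing $\tau$ inside $\X$ purely by tensor-summand-extension closure from $X/\q X$ and the ambient modules from (i) is the decisive final obstacle.
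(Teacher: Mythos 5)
Your framework matches the paper's: reduce to showing $R/\q\in\X$ for each minimal prime $\q$ of $\supp X$ ($X\in\X$), dispose of everything supported strictly above $\q$ via induction and prime filtrations, and exploit the fact that $Y:=X/\q X=R/\q\otimes_RX\in\X$ is supported on $\V(\q)$ with $\q Y_\q=0$. The gap you flag is genuine, and the trace-ideal route cannot be repaired: placing $\tau\subseteq R/\q$ in $\X$ would require some form of image or quotient closure, which is precisely what is not available. Neither the sequence $0\to\ker f\to Y\to\tau_f\to0$ nor $0\to\tau\to R/\q\to R/\q/\tau\to0$ helps, since the ambient term you would need in $\X$ is either an uncontrolled kernel or $R/\q$ itself (the goal).

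The paper's proof (Lemma \ref{10}(3), applied in Proposition \ref{1}) closes this with two ideas you do not use. First, take the irredundant primary decomposition $0=M\cap N$ of the zero submodule of $Y$, with $\ass(Y/M)=\{\q\}$ and every associated prime of $Y/N$ strictly containing $\q$. The short exact sequence
$0\to Y\to Y/M\oplus Y/N\to Y/(M+N)\to0$
has both outer terms in $\X$ (the right end because $\supp Y/(M+N)\subseteq\V(\q)\setminus\{\q\}$), so extension closure gives $Y/M\oplus Y/N\in\X$ and direct-summand closure gives $Y/M\in\X$. This is exactly the device for isolating a $\q$-torsion-free piece of $Y$ inside $\X$, and it is what you were missing. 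Second, since $(Y/M)_\q\cong\kappa(\q)^{\oplus m}$ with $m>0$, one chooses $m$ homomorphisms $Y/M\to R/\q$ whose localizations at $\q$ form a basis, yielding a map $Y/M\to(R/\q)^{\oplus m}$ that is an isomorphism at $\q$. Its kernel $K$ satisfies $K_\q=0$ and $\ass K\subseteq\ass(Y/M)=\{\q\}$, forcing $K=0$; its cokernel lies in $\supp^{-1}(\V(\q)\setminus\{\q\})\subseteq\X$. Thus $0\to Y/M\to(R/\q)^{\oplus m}\to C\to0$ puts $(R/\q)^{\oplus m}$, hence $R/\q$, in $\X$. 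Note the direction here is the opposite of yours: the paper maps a known member of $\X$ \emph{into} a free $R/\q$-module rather than \emph{onto} an ideal of $R/\q$, because with only direct-summand and extension closure one can control the kernel (via associated primes) and the cokernel (via support), but not the image.
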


The torsionfree classes of $\mod R$ were classified completely by Takahashi \cite{wide}; he constructed an explicit bijection between them and the subsets of $\spec R$.
Following \cite{E}, we say that a full subcategory of $\mod R$ is {\em IKE-closed} if it is closed under images, kernels and extensions.
Evidently, a torsionfree class is an IKE-closed subcategory.
Thus it is natural to ask whether an IKE-closed subcategory of $\mod R$ is torsionfree.
To get answers to this question, we prove the following theorem, which is the same as Theorem \ref{8}.

\begin{thm}\label{i2}
Let $R$ be a Cohen--Macaulay local ring of dimension one with maximal ideal $\m$ and infinite residue field $k$.
Suppose that the associated graded ring $\gr_\m R$ has positive depth (or equivalently, that $\gr_\m R$ is a Cohen--Macaulay ring).
Let $n$ be a positive integer.
Then every $R$-module of finite length can be built out of $R/\m^n$ by taking direct summands and extensions.
\end{thm}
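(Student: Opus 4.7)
The plan is to use the Cohen--Macaulay property of $\gr_\m R$ in the form of a well-behaved superficial element, and then build the modules $R/\m^j$ for $j<n$ out of $R/\m^n$ by iterated extensions and direct-summand extraction.

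\textbf{Setup.} Because $k$ is infinite, $R$ is one-dimensional Cohen--Macaulay, and $\gr_\m R$ has positive depth, one can choose a superficial element $x\in\m\setminus\m^2$ that is a non-zero-divisor of $R$ and whose initial form $x^\ast\in\m/\m^2$ is a non-zero-divisor in $\gr_\m R$. This gives the colon equality $\m^{i+1}:x=\m^i$ for every $i\ge 0$, and hence short exact sequences
\[
0\longrightarrow R/\m^{i}\xrightarrow{\,\cdot x\,}R/\m^{i+1}\longrightarrow R/(xR+\m^{i+1})\longrightarrow 0.
\]

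\textbf{Reduction.} Let $\X$ denote the smallest subcategory of $\mod R$ containing $R/\m^n$ that is closed under direct summands and extensions. Every finite length $R$-module admits a composition series whose factors are all isomorphic to $k=R/\m$, so once $k\in\X$ the extension-closure of $\X$ forces every finite length module into $\X$. Thus the problem is reduced to showing $k\in\X$, which I plan to prove by downward induction on $j\in\{n-1,\dots,1\}$ that $R/\m^{j}\in\X$.

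\textbf{Main construction.} Consider the $R$-linear map
\[
\varphi\colon R/\m^n\longrightarrow R/\m^{n-1}\oplus R/\m^{n+1},\qquad \bar r\longmapsto (\bar r,\,\overline{xr}).
\]
The annihilator of $(1,x)$ equals $\m^{n-1}\cap(\m^{n+1}:x)=\m^{n-1}\cap\m^n=\m^n$, so $\varphi$ is well-defined and injective, yielding
\[
0\longrightarrow R/\m^n\xrightarrow{\;\varphi\;}R/\m^{n-1}\oplus R/\m^{n+1}\longrightarrow Q\longrightarrow 0
\]
with $\ell(Q)=\ell(R/\m^n)+h_n$, where $h_n$ is the $n$-th coefficient of the $h$-polynomial of $\gr_\m R$. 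When $n$ exceeds the reduction number of $\m$, one has $h_n=0$ and $Q\cong R/\m^n$; the displayed sequence then realises $R/\m^{n-1}\oplus R/\m^{n+1}$ as an extension of $R/\m^n$ by itself, and summand-closure puts $R/\m^{n-1}$ and $R/\m^{n+1}$ in $\X$. Iterating this argument at successive indices produces $R/\m^{j}\in\X$ for every $j$ past the reduction number.

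\textbf{Main obstacle.} The remaining difficulty is to descend past the reduction number of $\m$, that is, to those $j$ for which $h_j>0$. Then the cokernel of the analogous map $\varphi$ is strictly larger than $R/\m^{j}$, so the naive inductive step fails. Overcoming this requires a more delicate construction, for instance analysing the cokernel $Q$ via its own short exact sequence and placing it in $\X$ recursively, or replacing $\varphi$ by an iterated extension that combines several maps determined by different powers $x,x^2,\dots$. This is precisely the step in which the full Cohen--Macaulay property of $\gr_\m R$ should enter decisively, beyond the mere existence of a superficial non-zero-divisor; completing it finishes the inductive descent to $k\in\X$ and thereby the theorem.
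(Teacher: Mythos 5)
Your construction past the reduction number is sound and in fact coincides with a special case of the paper's key lemma (Lemma \ref{4}): when $\m^{i}=x\m^{i-1}$, the short exact sequence $0\to R/\m^{i}\to R/\m^{i-1}\oplus R/\m^{i+1}\to R/\m^{i}\to 0$ is exactly the sequence $0\to R/xI\to R/I\oplus R/x^2I\to R/xI\to 0$ of Lemma \ref{4} with $I=\m^{i-1}$. But, as you say yourself in your final paragraph, this only descends to indices $i$ for which $x\m^{i-1}=\m^i$ holds on the nose, i.e.\ past the reduction number, and the proposal stops there. That is a genuine gap, not a gap in polish: the whole point of the theorem is that one can reach $k=R/\m$, and the indices below the reduction number are precisely where all the content sits.

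The paper's way around the obstacle is to stop insisting that the extension live over $R$ itself. Fix $1\le i\le n$ with $\m^{n+1}=x\m^n$. Write $\m^i=x\m^{i-1}+I$ for some ideal $I\subseteq\m^i$, chosen with $\mu(I)$ minimal. Pass to the factor ring $R/I$: there the image of $\m^i$ is exactly $x$ times the image of $\m^{i-1}$, so the ``$j$ past the reduction number'' situation is \emph{manufactured} by killing $I$. One then verifies the hypothesis $0:_{R/I}x\subseteq x(\m^{i-1}/I)$ of Lemma \ref{4} — this is where positivity of $\depth\gr_\m R$ (via the colon identities $\m^p:\m^q=\m^{p-q}$ of Lemma \ref{6}) and the minimality of $\mu(I)$ are both used — and Lemma \ref{4} applied over $R/I$ gives $R/\m^{i-1}\in\ext_{R/I}R/\m^i$, hence $R/\m^{i-1}\in\ext_RR/\m^i$ by Remark \ref{7}(2). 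Iterating from $i=n$ down to $i=1$ then reaches $k$ for every $i$, with no restriction to the range above the reduction number. So the missing idea in your proposal is: do not try to repair the map $\varphi$ over $R$; instead choose an auxiliary ideal $I$ and run the same two-term extension construction over $R/I$, where the needed equality $\m^i=x\m^{i-1}$ becomes true.

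One smaller point worth flagging: your reduction step asserts that once $k\in\X$, extension-closure forces every finite length module into $\X$. That is true (it is Remark \ref{7}(1) in the paper), but it uses that any finite length module has a composition series with all factors $k$ and that $\X$ is closed under extensions of \emph{arbitrary} length chains — worth saying explicitly, since $\X$ is defined only via two-term extensions and one must iterate.
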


Applying the above theorem to numerical semigroup rings, we obtain the following theorem, which is the combination of Theorems \ref{16} and \ref{17}.
The second assertion of the theorem below provides complete classification of the IKE-closed subcategories and an affirmative answer to the question stated above.

\begin{thm}\label{i3}
Let $k$ be a field.
Let $R=k[\![H]\!]$ be the (completed) numerical semigroup ring, where either
\begin{itemize}
\item
$H=\langle a,b\rangle$ with $a>b>0$ and $\gcd(a,b)=1$, or
\item
$H=\langle a,a+1,\dots,a+r\rangle$ with $a\ge2,\,r\ge1$ and $k$ infinite.
\end{itemize}
Then the following two statements hold true.
\begin{enumerate}[\rm(1)]
\item
Every $R$-module of finite length can be built out of $R/\c$ by taking direct summands and extensions, where $\c$ is the conductor of $R$.
\item
The IKE-closed subcategories of $\mod R$ are the zero subcategory, the subcategory of modules of finite length, the subcategory of torsionfree modules, and $\mod R$ itself.
\end{enumerate}
\end{thm}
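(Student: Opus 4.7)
My plan for Part~(1) is to reduce to Theorem~\ref{i2}: since that theorem exhibits $\fl R$ as the closure of $R/\m^n$ under direct summands and extensions for every $n\ge 1$, it suffices to produce some $n$ with $R/\m^n$ in the corresponding closure of $R/\c$. For $H=\langle a,a+1,\dots,a+r\rangle$, a direct combinatorial check should show $\c=\m^{m_0}$ where $m_0=\lceil(a-1)/r\rceil$: the generators $t^{m_0 a+j}$ for $0\le j\le m_0 r$ of $\m^{m_0}$ span precisely the elements of $H$ of degree $\ge m_0 a$, so the case follows from Theorem~\ref{i2} applied with $n=m_0$. For $H=\langle a,b\rangle$ the conductor is not in general a power of $\m$; here my plan is to fix $n$ with $\m^n\subseteq\c$ and exploit
\[
0\to \c/\m^n\to R/\m^n\to R/\c\to 0.
\]
A description of $\c/\m^n$ via the Apéry set of $H$ with respect to $b$ should show it is annihilated by $\m$, and hence is a direct sum of copies of $R/\m$. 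The problem thus reduces to showing $R/\m$ is built from $R/\c$ by direct summands and extensions, which I expect to be the principal obstacle in Part~(1): I plan to address it by constructing a nonsplit self-extension of $R/\c$ whose middle term decomposes and splits off a copy of $R/\m$ (possibly via an intermediate auxiliary module), using the hypersurface presentation $R=k[\![x,y]\!]/(y^a-x^b)$ and an explicit element of $\Ext^1_R(R/\c,R/\c)$.

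For Part~(2), the four listed subcategories are straightforwardly IKE-closed. For the converse, let $\X$ be a nonzero IKE-closed subcategory; note $\X$ is automatically closed under direct summands, since a summand $Y$ of $X\in\X$ is the image of the idempotent endomorphism of $X$ with image $Y$. Fix $0\ne M\in\X$. If $M$ has finite length, then with $\m=(x_1,\dots,x_s)$ the kernel of the morphism $M\to M^s$ sending $m$ to $(x_1m,\dots,x_sm)$ is the socle of $M$; it lies in $\X$ and is a nonzero $k$-vector space, so $R/\m\in\X$ as a direct summand, and extensions along a composition series yield $\fl R\subseteq\X$. If $M$ has infinite length, choose a parameter $x\in\m$ and take $n$ so large that $\m^n$ kills the torsion submodule $tM$; then $x^nM\cap tM\subseteq x^n\cdot tM=0$, so $x^nM\in\X$ is a nonzero torsionfree module.

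The main obstacle in Part~(2) is to show that a single nonzero torsionfree $N\in\X$ forces $\tf R\subseteq\X$. My plan is first to deduce $R\in\X$ by realizing $R$ as a direct summand of a module built from $N$ via an appropriate extension of torsionfree modules---taking as a prototype the nonsplit sequence $0\to\overline R\to R\oplus\overline R\to\overline R\to 0$ that already works for $H=\langle 2,3\rangle$---and then to produce every torsionfree module as an image, kernel, or extension among modules already in $\X$. Combining the two dichotomies: if $\X\subseteq\fl R$ then $\X=\fl R$; if $\X\subseteq\tf R$ then $\X=\tf R$; otherwise $\X$ contains both a nonzero finite-length module and a module of infinite length, whence $\fl R\subseteq\X$ and $\tf R\subseteq\X$, and every $L\in\mod R$ then lies in $\X$ via the extension $0\to tL\to L\to L/tL\to 0$, giving $\X=\mod R$.
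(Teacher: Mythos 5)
Your reduction for $H=\langle a,a+1,\dots,a+r\rangle$ agrees with the paper: $\c=\m^{\lceil(a-1)/r\rceil}$ (this is Proposition \ref{cond}), $\gr_\m R$ is Cohen--Macaulay (cited there from \cite{MT}), and Theorem \ref{i2} applies. That half of Part (1) is fine.

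For $H=\langle a,b\rangle$, however, there is a real gap. You correctly isolate the crux --- showing $R/\m\in\ext_R R/\c$ --- but leave it as a hope: ``constructing a nonsplit self-extension of $R/\c$ whose middle term decomposes and splits off a copy of $R/\m$.'' No such single extension is exhibited, and I do not believe one step suffices. The paper's mechanism (Lemma \ref{4}) does produce exactly the kind of sequence you envision, namely $0\to R/xI\to R/I\oplus R/x^2I\to R/xI\to 0$ under the hypothesis $0:_Rx\subseteq xI$, but what splits off is $R/I$ for a \emph{larger} ideal $I\supsetneq xI$, not $R/\m$ itself; one must then iterate. Proposition \ref{5} carries out this induction on the monomial generators of $\c$ inside $k[\![x,y]\!]/(x^a-y^b)$, peeling off one generator at a time and alternating between the variables $x$ and $y$, and this iteration is essentially the whole content. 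Also note two smaller points: once you have $R/\m\in\ext_R R/\c$ you are already done by Remark \ref{7}(1) ($\ext_R R/\m=\fl R$), so the detour through $\c/\m^n$ and $R/\m^n$ is unnecessary; and routing the $\langle a,b\rangle$ case through Theorem \ref{i2} is not available anyway, since that theorem assumes $k$ infinite while the $\langle a,b\rangle$ case of the present statement does not.

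In Part (2) the trichotomy structure is sound (including the socle trick and the observation that $x^nM$ is nonzero torsionfree when $M$ has infinite length), but the decisive step --- passing from one nonzero torsionfree module $N\in\X$ to $\tf R\subseteq\X$ --- is again left as a plan rather than an argument, and the $\langle 2,3\rangle$ prototype gives no indication of how to proceed in general. The paper's route (Proposition \ref{12}) is: $\X$ is $\Hom$-ideal by Remark \ref{2}(2); since $S$ is a DVR, $\Hom_R(S,N)$ is a nonzero free $S$-module lying in $\X$, whence $S\in\X$; then $\c\in\X$ since $\c$ is an ideal of $S$ (Lemma \ref{remim}); and finally $\tf R=\ext_R\c\subseteq\X$. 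That last equality $\ext_R\c=\tf R$ is obtained from Part (1) via the syzygy lemma, Lemma \ref{14}(2), so Part (2) genuinely depends on Part (1). Your proposal never records this dependence, and without some identity like $\ext_R\c=\tf R$ (or the $\Hom$-ideal trick giving $S\in\X$), I do not see how ``an appropriate extension of torsionfree modules'' produces $R$ as a summand for general $H$. You should also fill in, in the mixed case of the trichotomy, why $\X$ actually contains a nonzero module of finite length: given $M'\in\X$ with $tM'\ne 0$, you first need $M'/tM'\in\X$ (which follows once $\tf R\subseteq\X$), and then $tM'=\ker(M'\to M'/tM')\in\X$ is a nonzero finite-length module.
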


We now state the organization of the present paper.
For this, let $R$ be a commutative noetherian ring.
In Section \ref{s2}, we consider when a given subcategory $\X$ of $\mod R$ is a Serre subcategory.
We provide a lot of equivalent conditions for $\X$ to be Serre, which includes Theorem \ref{i1}.
In Section \ref{s3}, we establish the question asking whether an IKE-closed subcategory of $\mod R$ is a torsionfree class.
We give a couple of affirmative answers to the question.
In Section \ref{s4}, we keep exploring the question given in the previous section.
We first reduce it to a more accessible question, and then give positive answers in the case where $R$ is excellent.
In Section \ref{s5}, we explore what modules can be produced by taking only direct summands and extensions.
We find out a way to get a certain extension of modules, and prove Theorem \ref{i2}.
In Section \ref{s6}, we apply results obtained mainly in the previous two sections to the case where $R$ is a numerical semigroup ring, and prove theorems which include Theorem \ref{i3} as a special case.

We close the section by stating our convention.

\begin{conv}
Throughout the present paper, we assume that all rings are commutative and noetherian, that all modules are finitely generated, and that all subcategories are strictly full.
Let $R$ be a (commutative noetherian) ring.
Denote by $\mod R$ the category of (finitely generated) $R$-modules.
The transpose of a matrix $A$ is denoted by ${}^t\!A$.
For a (strictly full) subcategory $\X$ of $\mod R$ we say that:
\begin{enumerate}[(i)]
\item
$\X$ is {\em closed under direct summands} provided if $X\in\X$ and $Y$ is a direct summand of $X$, then $Y\in\X$.
\item
$\X$ is {\em closed under extensions} provided that for an exact sequence $0\to L\to M\to N\to0$ in $\mod R$, if $L$ and $N$ are in $\X$, then $M$ is also in $\X$.
\item
$\X$ is {\em closed under subobjects} (resp. {\em quotients}) provided that if $X$ is a module in $\X$ and $Y$ is a submodule of $X$, then $Y$ (resp. $X/Y$) is also in $\X$.
\item
$\X$ is {\em closed under kernels} (resp. {\em images}, {\em cokernels}) if the kernel (resp. image, cokernel) of each homomorphism $X\to X'$ with $X,X'\in\X$ belongs to $\X$.
\end{enumerate}
\end{conv}

%%%%%%%%%%%%%%%%%%%%%%%%%%%%%%%%%%%%%%%%%%%
\section{Serre subcategories}\label{s2}

Recall that a {\em Serre} subcategory of $\mod R$ is by definition a subcategory of $\mod R$ which is closed under subobjects, quotients and extensions.
In this section, we investigate when a given subcategory of $\mod R$ is Serre.
Precisely speaking, we shall reduce those defining conditions of a Serre subcategory as much as possible, keeping the resulting conditions defining a Serre subcategory.

For a subcategory $\X$ of $\mod R$, we set $\supp\X=\bigcup_{X\in\X}\supp X$, where $\supp X$ stands for the support of the $R$-module $X$.
For a subset $\Phi$ of $\spec R$ we denote by $\supp^{-1}\Phi$ the subcategory of $\mod R$ consisting of $R$-modules $M$ with $\supp M\subseteq\Phi$.
Note that $\supp\X$ is always a specialization-closed subset of $\spec R$, while $\supp^{-1}\Phi$ is always a Serre subcategory of $\mod R$.
We establish a key lemma.

\begin{lem}\label{10}
Let $\X$ be a subcategory of $\mod R$ closed under extensions.
\begin{enumerate}[\rm(1)]
\item
Put $\Phi=\{\p\in\spec R\mid R/\p\in\X\}$.
One then has $\supp^{-1}\Phi\subseteq\X$.
If $\supp\X\subseteq\Phi$, then $\X$ is Serre.
\item
If $\X\ne\supp^{-1}(\supp\X)$, then there exists $\p\in\supp\X$ with $R/\p\notin\X$ and $\supp^{-1}(\V(\p)\setminus\{\p\})\subseteq\X$.
\item
Let $\p$ be a prime ideal of $R$ such that $\supp^{-1}(\V(\p)\setminus\{\p\})\subseteq\X$.
Suppose that $\X$ is closed under direct summands and contains a module $X$ satisfying $\Min X=\{\p\}$ and $\p X_\p=0$.
Then $R/\p$ belongs to $\X$.
\end{enumerate}
\end{lem}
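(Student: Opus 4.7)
The plan is to treat the three parts in order, with (1) following a standard prime-filtration argument, (2) a maximality argument built on (1), and (3) a more substantive splitting construction.

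For (1), given $M\in\mod R$ with $\supp M\subseteq\Phi$, I would take a prime filtration $0=M_0\subsetneq M_1\subsetneq\cdots\subsetneq M_t=M$ with $M_i/M_{i-1}\cong R/\p_i$ and $\p_i\in\supp M\subseteq\Phi$. By definition of $\Phi$, each $R/\p_i$ lies in $\X$, and iterated extension-closure along the filtration yields $M\in\X$, establishing $\supp^{-1}\Phi\subseteq\X$. If moreover $\supp\X\subseteq\Phi$, then $\X\subseteq\supp^{-1}\supp\X\subseteq\supp^{-1}\Phi\subseteq\X$, forcing $\X=\supp^{-1}\Phi$, which is Serre.

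For (2), set $\Psi=\supp\X\setminus\Phi$. If $\Psi$ were empty then $\supp\X\subseteq\Phi$ and (1) would yield $\X=\supp^{-1}\supp\X$, contradicting the hypothesis, so $\Psi\neq\emptyset$. Noetherianity of $R$ supplies a maximal element $\p$ of $\Psi$. Every prime $\q\supsetneq\p$ lies in $\supp\X$ (by specialization-closure of $\supp\X$) and outside $\Psi$ (by maximality of $\p$), so $R/\q\in\X$. Applying (1) with $\V(\p)\setminus\{\p\}$ in place of $\Phi$ then gives $\supp^{-1}(\V(\p)\setminus\{\p\})\subseteq\X$.

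For (3), since $\p\in\Min X\subseteq\ass X$, I would choose $x\in X$ with $(0:_R x)=\p$, giving an embedding $R/\p\cong Rx\hookrightarrow X$. The condition $\p X_\p=0$ makes $X_\p$ a nonzero $k(\p)$-vector space in which $Rx_\p\cong k(\p)$ sits as a subspace. The plan is to produce $R/\p$ as a direct summand of some module already in $\X$. Using the canonical isomorphism $\Hom_R(X,R/\p)_\p\cong\Hom_{R_\p}(X_\p,k(\p))$, a nonzero $k(\p)$-linear form on $X_\p$ lifts (after clearing denominators by some $s\notin\p$) to $\phi\in\Hom_R(X,R/\p)$ whose image $J=\phi(X)$ is a nonzero ideal of $R/\p$ with $J_\p=(R/\p)_\p$; consequently $(R/\p)/J$ is supported on $\V(\p)\setminus\{\p\}$ and lies in $\X$. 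Once $J$ is placed in $\X$, extension-closure applied to $0\to J\to R/\p\to (R/\p)/J\to0$ delivers $R/\p\in\X$, reducing the claim to exhibiting $J$ (equivalently $Rx$) as a direct summand of an auxiliary module in $\X$ assembled from $X$ and members of $\supp^{-1}(\V(\p)\setminus\{\p\})$.

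The main obstacle in (3) is precisely this last splitting step. Extension- and direct-summand-closure do not produce quotients in $\X$ for free, so one must exhibit a short exact sequence that actually splits over $R$. The plan is to use a pushout of the inclusion $Rx\hookrightarrow X$ along the surjection $R/\p\twoheadrightarrow(R/\p)/J$, or an analogous construction, to produce a module in $\X$ in which $R/\p$ (or an isomorphic copy of $J$) appears as a genuine direct summand. The hypothesis $\p X_\p=0$---which makes $X_\p$ semisimple over $k(\p)$---is exactly what allows the linear-algebra splitting at $\p$ to be lifted to $R$ modulo a $\p$-torsion submodule in $\supp^{-1}(\V(\p)\setminus\{\p\})$, which $\X$ already absorbs.
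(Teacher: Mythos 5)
Parts (1) and (2) are fine. Your argument for (1) is identical to the paper's prime-filtration argument. For (2) you take a maximal element of $\Psi=\supp\X\setminus\Phi$ directly, whereas the paper takes a maximal element of the larger poset of ideals $I$ with $R/I\in\supp^{-1}(\supp\X)$ and $R/I\notin\X$, and then proves that maximal element is prime. Your route is a little shorter, leaning twice on (1) instead of once; both are correct.

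Part (3) has a genuine gap, and you essentially flag it yourself. You construct $\phi\colon X\to R/\p$ whose image $J$ satisfies $J_\p=(R/\p)_\p$, so that $(R/\p)/J\in\supp^{-1}(\V(\p)\setminus\{\p\})\subseteq\X$ and $R/\p\in\X$ would follow from $J\in\X$. But $J$ is simultaneously a quotient of $X$ and a submodule of $R/\p$, and closure under extensions and direct summands gives no handle on either. The pushout of $Rx\hookrightarrow X$ along $R/\p\twoheadrightarrow(R/\p)/J$ does not rescue this: it yields exact sequences $0\to J\to X\to P\to0$ and $0\to(R/\p)/J\to P\to X/Rx\to0$, neither of which places $J$, $R/\p$, or even $P$ in $\X$. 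The missing idea is to first \emph{replace $X$ by its $\p$-primary quotient}. From an irredundant primary decomposition $0=M\cap N$ in $X$ with $\ass(X/M)=\{\p\}$ and every associated prime of $X/N$ strictly containing $\p$, the Mayer--Vietoris sequence $0\to X\to X/M\oplus X/N\to X/(M+N)\to0$ has $X\in\X$ and $X/(M+N)\in\supp^{-1}(\V(\p)\setminus\{\p\})\subseteq\X$, so $X/M\in\X$ by extension- and direct-summand-closure. Because $\ass(X/M)=\{\p\}$, a map $X/M\to(R/\p)^{\oplus m}$ obtained by clearing denominators from an isomorphism $(X/M)_\p\cong\kappa(\p)^{\oplus m}$ is \emph{injective}: its kernel $K$ has $\ass K\subseteq\{\p\}$ yet $K_\p=0$, so $K=0$. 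The cokernel $C$ then has $C_\p=0$, hence $C\in\X$, and $0\to X/M\to(R/\p)^{\oplus m}\to C\to0$ exhibits $(R/\p)^{\oplus m}$ as an extension of modules in $\X$, giving $R/\p\in\X$. The point you were missing is that $(R/\p)^{\oplus m}$ must appear as the \emph{middle} term of a short exact sequence with both ends already in $\X$; for that the map into it must be injective, which is exactly what passing from $X$ to $X/M$ buys.
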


\begin{proof}
(1) Let $M\in\supp^{-1}\Phi$.
Take a filtration $0=M_0\subseteq\cdots\subseteq M_n=M$ of submodules of $M$ such that for each $i$ one has $M_i/M_{i-1}\cong R/\p_i$ with $\p_i\in\supp M$.
Then $\p_i\in\supp M\subseteq\Phi$ and $R/\p_i\in\X$.
As $\X$ is closed under extensions, we get $M\in\X$.
If $\supp\X\subseteq\Phi$, then $\X=\supp^{-1}\Phi$, and it is Serre.

(2) There is an $R$-module $M$ with $M\notin\X$ and $M\in\Y:=\supp^{-1}(\supp\X)$.
Note that $\Y$ is a Serre subcategory of $\mod R$.
If $M$ is generated by $n$ elements, then there is an exact sequence $0\to M'\to M\to M''\to0$ of $R$-modules such that $M'$ is cyclic, $M''$ is generated by $n-1$ elements, and $M',M''$ are in $\Y$.
By induction on $n$ we may assume that $M$ is cyclic.
Hence the set $\I$ of ideals $I$ of $R$ with $R/I\in\supp^{-1}(\supp\X)$ and $R/I\notin\X$ is nonempty.
As $R$ is noetherian, there exists a maximal element $\p$ of $\I$ with respect to the inclusion relation.
Then $R/\p\notin\X$ and $\V(\p)=\supp R/\p\subseteq\supp\X$.
Take a filtration $0=M_0\subseteq\cdots\subseteq M_n=R/\p$ of submodules of the $R$-module $R/\p$ such that for each $i$ one has $M_i/M_{i-1}\cong R/\p_i$ with $\p_i\in\spec R$.
Then each $\p_i$ contains $\p$.
Assume that the ideal $\p$ of $R$ is not prime.
Then each $\p_i$ must strictly contain $\p$, and the maximality of $\p$ implies $R/\p_i\in\X$ for all $i$,
As $\X$ is closed under extensions, we have $R/\p\in\X$.
This contradiction shows that $\p$ is a prime ideal of $R$.
Let $N\in\supp^{-1}(\V(\p)\setminus\{\p\})$ and $\q\in\supp N$.
Then $\p\subsetneq\q$ and $\supp R/\q\subseteq\supp R/\p\subseteq\supp\X$.
The maximality of $\p$ implies $R/\q\in\X$.
By (1) we get $N\in\X$.
Hence $\supp^{-1}(\V(\p)\setminus\{\p\})\subseteq\X$.

(3) We have $\p\in\ass X\subseteq\supp X=\V(\p)$.
Taking the irredundant primary decomposition of the submodule $0$ of the $R$-module $X$, we find submodules $M,N$ of $X$ with $0=M\cap N$ such that $\ass X/M=\{\p\}$ and every associated prime ideal of $X/N$ strictly contains $\p$.
There is an exact sequence $\sigma:0\to X\to X/M\oplus X/N\to X/(M+N)\to0$.
Note that $\supp X/(M+N)\subseteq\supp X/N\subseteq\V(\p)\setminus\{\p\}$.
We get $X/(M+N)\in\supp^{-1}(\V(\p)\setminus\{\p\})\subseteq\X$ and $(X/M)_\p\cong X_\p\cong\kappa(\p)^{\oplus m}$ for some $m>0$ as $\p X_\p=0$.
Since $\X$ is closed under direct summands and extensions, the exact sequence $\sigma$ shows $X/M\in\X$.
There is an exact sequence $\tau:0\to K\to X/M\to(R/\p)^{\oplus m}\to C\to0$ with $K_\p=C_\p=0$.
As $\ass K\subseteq\ass X/M=\{\p\}$, we have $K=0$.
We also have $C\in\supp^{-1}(\V(\p)\setminus\{\p\})\subseteq\X$.
The exact sequence $\tau$ implies $R/\p\in\X$.
\end{proof}

A subcategory $\X$ of $\mod R$ is said to be {\em $\otimes$-ideal} if $M\otimes_RX\in\X$ for all $M\in\mod R$ and $X\in\X$.
In the proposition below we give a sufficient condition for $\X$ to be Serre.

\begin{prop}\label{1}
Let $\X$ be a $\otimes$-ideal subcategory of $\mod R$ closed under direct summands and extensions.
Then $\X$ is a Serre subcategory of $\mod R$.
\end{prop}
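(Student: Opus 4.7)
The plan is to reduce the problem to Lemma \ref{10} by showing that $\X = \supp^{-1}(\supp\X)$; this would immediately make $\X$ Serre, since $\supp^{-1}\Phi$ is Serre for every subset $\Phi$ of $\spec R$. I will argue by contradiction: assume $\X \ne \supp^{-1}(\supp\X)$ and use parts (2) and (3) of Lemma \ref{10} in tandem, with the $\otimes$-ideal hypothesis supplying the module that (3) requires.

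Assuming $\X \ne \supp^{-1}(\supp\X)$, Lemma \ref{10}(2) produces a prime $\p \in \supp\X$ with $R/\p \notin \X$ and $\supp^{-1}(\V(\p)\setminus\{\p\}) \subseteq \X$. Since $\p \in \supp\X$, there exists $Y \in \X$ with $\p \in \supp Y$. The key construction is then to form
\[
X := Y \otimes_R R/\p,
\]
which lies in $\X$ because $\X$ is $\otimes$-ideal. This $X$ is annihilated by $\p$, so in particular $\p X_\p = 0$; its support is $\supp Y \cap \V(\p) \subseteq \V(\p)$, and it contains $\p$ because $X_\p \cong Y_\p/\p Y_\p \ne 0$ by Nakayama's lemma (as $Y_\p \ne 0$). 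Since every element of $\supp X$ contains $\p$ and $\p$ itself lies in $\supp X$, we get $\Min X = \{\p\}$.

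At this point all hypotheses of Lemma \ref{10}(3) are in place for the prime $\p$: we have $\supp^{-1}(\V(\p)\setminus\{\p\}) \subseteq \X$, the subcategory $\X$ is closed under direct summands, and $X \in \X$ satisfies $\Min X = \{\p\}$ with $\p X_\p = 0$. Hence Lemma \ref{10}(3) delivers $R/\p \in \X$, contradicting the choice of $\p$. This contradiction forces $\X = \supp^{-1}(\supp\X)$, completing the proof.

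I do not anticipate a real obstacle here: the two parts of the key lemma are tailored precisely to this situation, and the only genuinely new input is the elementary observation that $Y \otimes_R R/\p$ is a module in $\X$ whose minimal support is exactly $\{\p\}$ and which is killed by $\p$. The only point that deserves care is verifying that $\p \in \supp(Y \otimes_R R/\p)$, which is where $Y$ being chosen with $\p \in \supp Y$, together with Nakayama, is used.
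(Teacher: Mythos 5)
Your proof is correct and matches the paper's own argument essentially line for line: both deduce from Lemma \ref{10}(2) a prime $\p$ with $R/\p\notin\X$ and $\supp^{-1}(\V(\p)\setminus\{\p\})\subseteq\X$, then tensor a witness module with $R/\p$ (legitimate by the $\otimes$-ideal hypothesis) to produce the module required by Lemma \ref{10}(3), reaching the same contradiction. The only difference is cosmetic (variable names, and you spell out the Nakayama step that the paper leaves implicit).
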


\begin{proof}
Suppose that $\X$ is not Serre.
Then Lemma \ref{10}(2) implies that there exist $X\in\X$ and $\p\in\supp X$ such that $R/\p\notin\X$ and $\supp^{-1}(\V(\p)\setminus\{\p\})\subseteq\X$.
As $\X$ is $\otimes$-ideal, we have $Y:=X/\p X=R/\p\otimes_RX\in\X$.
Note that $\Min Y=\{\p\}$ and $\p Y_\p=0$.
Lemma \ref{10}(3) implies $R/\p\in\X$, which is a contradiction.
\end{proof}

Let $\X$ be a subcategory of $\mod R$.
We say that $\X$ is {\em $\Hom$-ideal} (resp. {\em $\Ext$-ideal}) if $\Hom_R(M,X)$ (resp. $\Ext_R^1(M,X)$) is in $\X$ for all $M\in\mod R$ and $X\in\X$.
We give two sufficient conditions for $\X$ to be Serre.

\begin{prop}\label{11}
Let $\X$ be a $\Hom$-ideal subcategory of $\mod R$ which is closed under direct summands and extensions.
Suppose either {\rm(1)} $\ass\X$ is specialization-closed or {\rm(2)} $\X$ is $\Ext$-ideal.
Then $\X$ is Serre.
\end{prop}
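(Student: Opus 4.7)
The plan is to imitate the proof of Proposition \ref{1}. Supposing $\X$ is not Serre, Lemma \ref{10}(2) hands us $X\in\X$ and $\p\in\supp X$ with $R/\p\notin\X$ and $\supp^{-1}(\V(\p)\setminus\{\p\})\subseteq\X$. In each of the two cases it then suffices to produce a module $Y\in\X$ satisfying $\Min Y=\{\p\}$ and $\p Y_\p=0$, since Lemma \ref{10}(3) will then force the contradiction $R/\p\in\X$.

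For case (1), the condition $\p\in\supp X$ forces some $\q\in\ass X$ with $\q\subseteq\p$, and then specialization-closedness of $\ass\X$ yields $\p\in\ass\X$; I would fix an $X'\in\X$ with $\p\in\ass X'$ and set $Y:=\Hom_R(R/\p,X')$. This is in $\X$ by $\Hom$-idealness, the equality $\p Y=0$ is built in since $Y=(0:_{X'}\p)$, and the standard identity $\ass_R\Hom_R(R/\p,X')=\V(\p)\cap\ass X'$ furnishes $\p\in\ass Y\subseteq\V(\p)$, whence $\Min Y=\{\p\}$.

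For case (2), I would instead take $t:=\depth_{R_\p}X_\p$ and $Y:=\Ext_R^t(R/\p,X)$. The membership $Y\in\X$ holds because when $t=0$ this is $\Hom$-idealness, and when $t\ge 1$ the dimension-shifting isomorphism $\Ext_R^t(R/\p,X)\cong\Ext_R^1(\syz^{t-1}(R/\p),X)$ reduces to $\Ext$-idealness. Functoriality in the first variable gives $\p Y=0$, so $\ass Y\subseteq\V(\p)$; and the compatibility of $\Ext$ with localization combined with the Ext-depth identity $t=\min\{i\ge 0:\Ext_{R_\p}^i(\kappa(\p),X_\p)\ne 0\}$ forces $Y_\p\ne 0$, so $\p$ lies in $\supp Y\subseteq\V(\p)$ and is therefore its unique minimal member, giving $\p\in\ass Y$ and $\Min Y=\{\p\}$.

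The only non-formal input I foresee is the Ext-depth identity invoked in case (2); apart from that and the routine verification that $\Ext$-idealness for $\Ext^1$ propagates to all higher Ext's via syzygies, the proof is a direct application of Lemma \ref{10} once the right $Y$ has been exhibited.
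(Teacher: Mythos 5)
Your proof is correct. In case (1) you follow exactly the paper's route: use specialization-closedness to get $\p\in\ass\X$, pick $X'\in\X$ with $\p\in\ass X'$, and take $Y=\Hom_R(R/\p,X')$, then invoke Lemma \ref{10}(3).

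In case (2), however, you take a genuinely different path. The paper picks $X\in\X$ with $\p\in\supp X$ and runs a case analysis on the Auslander--Bridger four-term exact sequence
$$
0\to\Ext_R^1(\tr R/\p,X)\to X/\p X\to\Hom_R(\Hom_R(R/\p,R),X)\to\Ext_R^2(\tr R/\p,X)\to0
$$
from \cite[Proposition (2.6)]{AB}: if the middle image $Z$ lies in $\X$, then $\Ext$-idealness and closure under extensions give $X/\p X\in\X$, which serves as $Y$; if $Z\notin\X$, then $Z_\p\ne0$ forces $\p\in\ass X$ and one falls back to the $\Hom_R(R/\p,X)$ construction from case (1). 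You instead set $t=\depth_{R_\p}X_\p$ and take $Y=\Ext_R^t(R/\p,X)$ directly, putting $Y\in\X$ via $\Hom$-idealness when $t=0$ and via dimension-shifting $\Ext_R^t(R/\p,X)\cong\Ext_R^1(\syz^{t-1}(R/\p),X)$ together with $\Ext$-idealness when $t\ge1$. The nonvanishing $Y_\p\cong\Ext_{R_\p}^t(\kappa(\p),X_\p)\ne0$ is Rees's depth-sensitivity of $\Ext$. Your approach is cleaner in that it avoids the Auslander--Bridger machinery and the dichotomy on $Z$, producing the required $Y$ in one stroke; the cost is importing Rees's theorem, whereas the paper's proof keeps to the transpose formalism it already has on hand. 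Both are sound, and your argument would serve as a valid replacement for the paper's case (2).
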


\begin{proof}
Assuming that $\X$ is not Serre, we shall derive a contradiction.
Lemma \ref{10}(2) gives rise to a prime ideal $\p\in\supp\X$ such that $R/\p\notin\X$ and $\supp^{-1}(\V(\p)\setminus\{\p\})\subseteq\X$.

(1) Note that the equality $\ass\X=\supp\X$ holds.
We find a module $X\in\X$ such that $\p\in\ass X$.
Put $Y=\Hom_R(R/\p,X)\in\Hom_R(\mod R,\X)\subseteq\X$.
We have $\p Y_\p=0$ and $\ass Y=\V(\p)\cap\ass X$, the latter of which implies $\Min Y=\{\p\}$.
It follows from Lemma \ref{10}(3) that $R/\p$ is in $\X$, which is a contradiction.

(2) Choose a module $X\in\X$ such that $\p\in\supp X$.
By \cite[Proposition (2.6)]{AB} there is an exact sequence
$$
0\to\Ext_R^1(\tr R/\p,X)\to X/\p X\xrightarrow{\varpi}\Hom_R(\Hom_R(R/\p,R),X)\to\Ext_R^2(\tr R/\p,X)\to0.
$$
Let $Z$ be the image of the map $\varpi$.
By assumption, we have $\Ext_R^1(\tr R/\p,X)\in\Ext_R^1(\mod R,\X)\subseteq\X$.

First, we suppose $Z\in\X$.
Since $\X$ is closed under extensions, we have $Y:=X/\p X\in\X$ and $\p Y_\p=0$.
As $\supp Y=\V(\p)\cap\supp X=\V(\p)$, we get $\Min Y=\{\p\}$.
Lemma \ref{10}(3) yields $R/\p\in\X$, a contradiction.

Next, we suppose $Z\notin\X$.
Then $\supp Z\subseteq\supp X/\p X\subseteq\V(\p)$.
If $Z_\p=0$, then $Z\in\supp^{-1}(\V(\p)\setminus\{\p\})\subseteq\X$, which is contrary to the assumption.
Hence $\p\in\supp Z\subseteq\supp \Hom_R(\Hom_R(R/\p,R),X)$, and we see that $\p\in\ass X$.
Setting $Y=\Hom_R(R/\p,X)$, we have $\p Y_\p=0$, $\ass Y=\V(\p)\cap\ass X$ and $\Min Y=\{\p\}$.
It follows from Lemma \ref{10}(3) that $R/\p$ belongs to $\X$, which is a contradiction.
\end{proof}

A subcategory $\X$ of $\mod R$ is {\em closed under cokernels of monomorphisms} provided for an exact sequence $0\to L\to M\to N\to0$ in $\mod R$ with $L,M\in\X$ one has $N\in\X$.
We collect some elementary facts.

\begin{rem}\label{2}
The following statements hold true for a subcategory $\X$ of $\mod R$.
\begin{enumerate}[(1)]
\item
If $\X$ is closed under finite direct sums and cokernels, then it is $\otimes$-ideal and closed under direct summands.
Indeed, let $M\in\mod R$ and $X\in\X$.
Then there is an exact sequence $R^{\oplus a}\to R^{\oplus b}\to M\to0$, which induces an exact sequence $X^{\oplus a}\to X^{\oplus b}\to M\otimes_RX\to0$.
Hence $M\otimes_RX\in\X$.
Thus $\X$ is $\otimes$-ideal.
That $\X$ is closed under direct summands follows by splicing the split exact sequences $0\to M\to M\oplus N\to N\to0$ and $0\to N\to M\oplus N\to M\to0$ for $R$-modules $M$ and $N$.
\item
If $\X$ is closed under finite direct sums and kernels, then it is $\Hom$-ideal.
Indeed, let $M\in\mod R$ and $X\in\X$.
Then there is an exact sequence $R^{\oplus a}\to R^{\oplus b}\to M\to0$, which induces an exact sequence $0\to\Hom_R(M,X)\to X^{\oplus b}\to X^{\oplus a}$.
Hence $\X$ contains $\Hom_R(\mod R,\X)$.
\item
If $\X$ is $\Hom$-ideal and closed under cokernels of monomorphisms, then it is $\Ext$-ideal.
In fact, let $M\in\mod R$ and $X\in\X$.
Then there is an exact sequence $0\to N\to P\to M\to0$ with $P$ projective, which induces an exact sequence $0\to\Hom_R(M,X)\to\Hom_R(P,X)\to\Hom_R(N,X)\to\Ext_R^1(M,X)\to0$.
\end{enumerate}
\end{rem}

Let $\X$ be a subcategory of $\mod R$ closed under extensions.
Recall that $\X$ is said to be {\em torsion} if it is closed under quotients.
Also, $\X$ is called {\em wide} (resp. {\em narrow}) if it is closed under kernels and cokernels (resp. cokernels).
Now we state and prove the main result of this section, which is the theorem below.
This includes the assertions of the theorems of Stanley and Wang \cite[Theorem 2]{SW} and of Takahashi \cite[Theorem 3.1]{wide} with much simpler proofs.

\begin{thm}\label{15}
For a subcategory of $\X$ of $\mod R$ the following eight conditions are equivalent.\\
{\rm(1)} $\X$ is Serre.\qquad
{\rm(2)} $\X$ is torsion.\qquad
{\rm(3)} $\X$ is wide.\qquad
{\rm(4)} $\X$ is narrow.\\
{\rm(5)} $\X$ is $\otimes$-ideal, and closed under direct summands and extensions.\\
{\rm(6)} $\X$ is $\Hom$-ideal, $\Ext$-ideal, and closed under direct summands and extensions.\\
{\rm(7)} $\X$ is $\Hom$-ideal, and closed under direct summands, extensions and cokernels of monomorphisms.\\
{\rm(8)} $\X$ is $\Hom$-ideal, closed under direct summands and extensions, and $\ass\X$ is specialization-closed.
\end{thm}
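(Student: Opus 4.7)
The plan is to set up a cycle of implications covering all eight conditions, with Propositions~\ref{1} and~\ref{11} doing the deep work and Remark~\ref{2} translating between different closure conditions. Concretely, I would show that $(1)$ implies each of the other seven conditions directly, and then close the loop through $(2),(4) \Rightarrow (5) \Rightarrow (1)$, $(3) \Rightarrow (7) \Rightarrow (6) \Rightarrow (1)$, and $(8) \Rightarrow (1)$.

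The implications $(1) \Rightarrow (2),(3),(4)$ are immediate from the definitions of torsion, wide, and narrow, since a Serre subcategory is closed under subobjects, quotients, and extensions (and in particular under kernels and cokernels). For $(1) \Rightarrow (5),(6),(7)$, I would take a projective presentation $R^a \to R^b \to M \to 0$ and apply $-\otimes_R X$ or $\Hom_R(-,X)$ for $X \in \X$; reading off the resulting (long) exact sequence together with Serre's closure properties shows that $M\otimes_R X$, $\Hom_R(M,X)$, and $\Ext_R^1(M,X)$ all lie in $\X$, while closure under direct summands and under cokernels of monomorphisms are automatic for a Serre subcategory. For $(1) \Rightarrow (8)$, given $\p \in \ass X$ with $X \in \X$ one has $R/\p \hookrightarrow X$, so $R/\p \in \X$; for any $\q \supseteq \p$ the quotient $R/\q$ of $R/\p$ lies in $\X$ and contributes $\q$ to $\ass \X$, showing $\ass\X$ is specialization-closed.

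For the converse chain, I would observe that conditions $(2)$ and $(4)$ each entail closure under cokernels (quotients are cokernels of monomorphisms for torsion classes; narrow is by definition closed under cokernels), so Remark~\ref{2}(1) upgrades either to $(5)$. Condition $(3)$ provides closure under kernels and cokernels, so Remark~\ref{2}(1) yields closure under direct summands, Remark~\ref{2}(2) yields the $\Hom$-ideal property, and closure under cokernels of monomorphisms is automatic; this gives $(7)$. Then $(7) \Rightarrow (6)$ is Remark~\ref{2}(3), $(5) \Rightarrow (1)$ is Proposition~\ref{1}, $(6) \Rightarrow (1)$ is Proposition~\ref{11}(2), and $(8) \Rightarrow (1)$ is Proposition~\ref{11}(1).

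I do not foresee any substantive obstacle: the nontrivial content has already been distilled into Propositions~\ref{1} and~\ref{11}, and Remark~\ref{2} supplies the routine bridges between the various closure conditions. The only care required is bookkeeping to confirm that each of the eight conditions slots into the cycle above, and to check that the passage from a projective presentation to the sought-after module in $\X$ in the $(1) \Rightarrow (5),(6),(7)$ step genuinely uses only the closure properties of a Serre subcategory.
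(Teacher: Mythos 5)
Your proposal is correct and follows essentially the same route as the paper: the trivial implications among (1)--(4), Remark~\ref{2} to bridge closure conditions, Proposition~\ref{1} for (5) $\Rightarrow$ (1), Proposition~\ref{11} for (6),(8) $\Rightarrow$ (1), and the monomorphism/epimorphism argument $R/\p\hookrightarrow X$, $R/\p\twoheadrightarrow R/\q$ for (1) $\Rightarrow$ (8); the only differences are a harmless rearrangement of the implication cycle (the paper funnels (2) and (3) through (4), you route (3) through (7)) and some redundant direct verifications of (1) $\Rightarrow$ (5),(6),(7).
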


\begin{proof}
The implications (1) $\Rightarrow$ (2) $\Rightarrow$ (4) and (1) $\Rightarrow$ (3) $\Rightarrow$ (4) are clear.
The implications (4) $\Rightarrow$ (5) $\Rightarrow$ (1) follow by Remark \ref{2}(1) and Proposition \ref{1}, while (1) $\Rightarrow$ (7) $\Rightarrow$ (6) and (8) $\Rightarrow$ (1) $\Leftarrow$ (6) follow by Remark \ref{2}(2)(3) and Proposition \ref{11}.
It remains to show (1) $\Rightarrow$ (8).
Let $\X$ be Serre.
Then $\X$ is $\Hom$-ideal by Remark \ref{2}(2).
Let $\p\in\ass\X$ and $\q\in\V(\p)$.
There are a monomorphism $R/\p\to X\in\X$ and an epimorphism $R/\p\to R/\q$.
We see $R/\q\in\X$, whence $\q\in\ass\X$.
Thus $\ass\X$ is specialization-closed.
\end{proof}

We say that a subcategory $\X$ of $\mod R$ is {\it $\Tor$-ideal} if $\Tor_1^R(M, X) \in \X$ for all $M \in \mod R$ and $X \in \X$.
To show the final result of this section, we establish a lemma on Ext-ideal and Tor-ideal subcategories.

\begin{lem}\label{torext} 
Let $\X$ be a subcategory of $\mod R$, $X \in \X$, and $\p$ a prime ideal of $R$ of positive grade.
If $\X$ is $\Ext$-ideal, then $X \otimes_R R/\p \in \X$.
If $\X$ is $\Tor$-ideal, then $\Hom_R(R/\p, X) \in \X$.
\end{lem}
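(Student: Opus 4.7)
The plan is to mimic the strategy already employed in the proof of Proposition~\ref{11}, now exploiting the positive grade hypothesis to force a crucial vanishing. Since $\p$ has positive grade, it contains an $R$-regular element $a$, and any $f\in\Hom_R(R/\p,R)$ satisfies $af(\bar1)=0$, so $f=0$; hence $\Hom_R(R/\p,R)=0$.

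With that in hand, I would invoke the two Auslander--Bridger four-term exact sequences \cite[Proposition~(2.6)]{AB} attached to a module $M$ and an $R$-module $X$:
$$0\to\Ext_R^1(\tr M,X)\to X\otimes_RM\to\Hom_R(\Hom_R(M,R),X)\to\Ext_R^2(\tr M,X)\to0,$$
$$0\to\Tor_2^R(\tr M,X)\to\Hom_R(M,R)\otimes_RX\to\Hom_R(M,X)\to\Tor_1^R(\tr M,X)\to0.$$
The first is exactly the sequence used inside the proof of Proposition~\ref{11}; the second is its $\Tor$--analogue, derived from the same free presentation $R^a\to R^b\to M\to0$ by tensoring the dualized complex $0\to M^\ast\to R^b\to R^a\to\tr M\to0$ with $X$ and comparing with the presentation of $\Hom_R(M,X)$ as $\kernel(X^{b}\to X^{a})$.

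Specializing to $M=R/\p$, the vanishing $\Hom_R(R/\p,R)=0$ collapses the middle term in each four-term sequence, producing natural isomorphisms
$$X\otimes_RR/\p\cong\Ext_R^1(\tr R/\p,X),\qquad \Hom_R(R/\p,X)\cong\Tor_1^R(\tr R/\p,X).$$
If $\X$ is $\Ext$-ideal, the first right-hand side lies in $\X$ by definition, yielding $X\otimes_RR/\p\in\X$. If instead $\X$ is $\Tor$-ideal, the second isomorphism similarly gives $\Hom_R(R/\p,X)\in\X$.

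The only point that needs verification beyond what is already in the paper is the existence of the $\Tor$--version of the Auslander--Bridger sequence, since only the $\Ext$--version is explicitly cited earlier. I expect this to be a routine diagram chase, entirely parallel to the derivation of the $\Ext$--version, so it should not present a genuine obstacle.
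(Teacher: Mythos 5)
Your argument is correct and follows essentially the same route as the paper's: both hinge on the observation that positive grade forces $\Hom_R(R/\p,R)=0$, which yields the isomorphisms $X\otimes_R R/\p\cong\Ext^1_R(\tr R/\p,X)$ and $\Hom_R(R/\p,X)\cong\Tor_1^R(\tr R/\p,X)$, after which the $\Ext$-ideal (resp.\ $\Tor$-ideal) hypothesis finishes the job. The paper obtains these isomorphisms a bit more directly by applying $\Hom_R(-,X)$ and $-\otimes_R X$ to the length-one free resolution $0\to R\to R^{\oplus n}\to C\to 0$ of $C=\tr R/\p$ coming from dualizing a presentation of $R/\p$, which avoids having to invoke (or verify) the $\Tor$-analogue of the Auslander--Bridger four-term sequence that you flag as the one gap in your write-up.
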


\begin{proof}
Let $a_1, \dots, a_n$ be a system of generators of $\p$. 
Taking the $R$-dual of the exact sequence $R^{\oplus n} \xrightarrow{(\begin{smallmatrix}a_1&\cdots&a_n\end{smallmatrix})} R \to R/\p \to 0$ and using the assumption that $\Hom_R(R/\p,R)=0$, we have a free resolution $0 \to R \xrightarrow{{}^\t(\begin{smallmatrix}a_1&\cdots&a_n\end{smallmatrix})} R^{\oplus n} \to C \to 0$ of $C$.
We get two exact sequences
$$
\begin{array}{l}
0 \to \Hom_R(C, X) \to X^{\oplus n} \xrightarrow{(\begin{smallmatrix}a_1&\cdots&a_n\end{smallmatrix})} X \to \Ext_R^1(C, X) \to 0,\\
0 \to \Tor_1^R(C, X) \to X \xrightarrow{{}^\t(\begin{smallmatrix}a_1&\cdots&a_n\end{smallmatrix})} X^{\oplus n} \to C\otimes_R X \to 0.
\end{array}
$$
These exact sequences give rise to isomorphisms $X \otimes_R R/\p \cong \Ext_R^1(C, X)$ and $\Hom_R(R/\p, X) \cong \Tor_1^R(C, X)$, respectively.
This finishes the proof of the lemma.
\end{proof}

The proposition below concerns when a subcategory $\X$ of $\mod R$ with $\supp\X\cap\ass R=\emptyset$ is Serre.

\begin{prop}
Let $\X$ be a subcategory of $\mod R$ closed under direct summands and extensions.
Consider the following conditions on the subcategory $\X$.\\
{\rm(1)} $\X$ is Serre.\qquad
{\rm(2)} $\X$ is $\Ext$-ideal.\qquad
{\rm(3)} $\X$ is $\Tor$-ideal and $\ass \X$ is specialization-closed.\\
Then {\rm(1)} implies {\rm(2)} and {\rm(3)}.
If $\supp \X \cap \ass R =\emptyset$, then those three conditions are equivalent.
\end{prop}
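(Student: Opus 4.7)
The plan is to obtain (1)~$\Rightarrow$~(2) and (1)~$\Rightarrow$~(3) by a direct projective-resolution argument, and the converses under the extra hypothesis by combining Lemma~\ref{10} with Lemma~\ref{torext}.

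First I would handle (1)~$\Rightarrow$~(2) and (1)~$\Rightarrow$~(3). A Serre subcategory is closed under subobjects, quotients, and extensions, hence under finite direct sums. Given $M\in\mod R$ and $X\in\X$, pick a projective resolution $\cdots\to P_1\to P_0\to M\to 0$ by finitely generated free modules. Applying $\Hom_R(-,X)$ and $-\otimes_R X$ shows that $\Ext_R^1(M,X)$ and $\Tor_1^R(M,X)$ are subquotients of $\Hom_R(P_1,X)\cong X^{\oplus a}$ and $P_1\otimes_R X\cong X^{\oplus a}$ respectively, both of which lie in $\X$; hence $\X$ is $\Ext$-ideal and $\Tor$-ideal. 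That $\ass\X$ is specialization-closed whenever $\X$ is Serre is already verified in the proof of Theorem~\ref{15} (the step (1)~$\Rightarrow$~(8)).

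For the equivalence under $\supp\X\cap\ass R=\emptyset$, the crucial observation is that this hypothesis forces every prime $\p\in\supp\X$ to have positive grade: since $\supp\X$ is specialization-closed, if $\p$ were contained in some $\q\in\ass R$ then $\q\in\supp\X\cap\ass R$, contradicting the hypothesis, so by prime avoidance $\p$ contains a non-zero-divisor of $R$. Now assume toward contradiction that $\X$ is not Serre. Lemma~\ref{10}(2) then furnishes a prime $\p\in\supp\X$ with $R/\p\notin\X$ and $\supp^{-1}(\V(\p)\setminus\{\p\})\subseteq\X$, and the observation above guarantees that $\p$ has positive grade.

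To close (2)~$\Rightarrow$~(1), pick any $X\in\X$ with $\p\in\supp X$ and set $Y:=X/\p X=X\otimes_R R/\p$. Lemma~\ref{torext} puts $Y$ in $\X$, while clearly $\p Y_\p=0$ and $\supp Y=\V(\p)\cap\supp X$ yields $\Min Y=\{\p\}$; Lemma~\ref{10}(3) then forces $R/\p\in\X$, a contradiction. To close (3)~$\Rightarrow$~(1), note that specialization-closedness of $\ass\X$ combined with the standard inclusions $\Min\supp\X\subseteq\ass\X\subseteq\supp\X$ yields $\ass\X=\supp\X$, so $\p\in\ass\X$ and we may choose $X\in\X$ with $\p\in\ass X$. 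Setting $Y:=\Hom_R(R/\p,X)$, Lemma~\ref{torext} again gives $Y\in\X$, while $\p Y_\p=0$ and $\ass Y=\ass X\cap\V(\p)\ni\p$ imply $\Min Y=\{\p\}$, so Lemma~\ref{10}(3) produces $R/\p\in\X$, a contradiction. The main obstacle is precisely the positive-grade observation above, which is exactly what is needed for Lemma~\ref{torext} to apply.
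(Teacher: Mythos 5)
Your proposal is correct and takes essentially the same route as the paper: the forward implications via finite free resolutions, and the converses under $\supp\X\cap\ass R=\emptyset$ by invoking Lemma~\ref{10}(2), observing that each $\p\in\supp\X$ has positive grade, and then running the proofs of Proposition~\ref{1} and Proposition~\ref{11}(1) with Lemma~\ref{torext} substituting for the $\otimes$-ideal and $\Hom$-ideal closures. The paper compresses the (1)$\Rightarrow$(2) step and the specialization-closedness of $\ass\X$ into a citation of Theorem~\ref{15} and cites \cite[Exercise 1.2.27]{BH} for the positive-grade observation, but the mathematics is identical to what you wrote.
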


\begin{proof}
Suppose that $\X$ is Serre.
Then $\X$ is $\otimes$-ideal, $\Ext$-ideal and $\ass\X$ is specialization-closed by Theorem \ref{15}.
If $M\in\mod R$ and $X\in\X$, then there is an exact sequence $0\to N\to P\to M\to0$ with $P$ projective, which induces an exact sequence $0\to\Tor_1^R(M,X)\to N\otimes X$.
This shows that $\Tor_1^R(M,X)$ belongs to $\X$, and we see that $\X$ is $\Tor$-ideal.
Therefore, (1) implies (2) and (3).

Assume $\supp \X \cap \ass R =\emptyset$.
Then each $\p \in \supp \X$ satisfies $\V(\p) \cap \ass R = \emptyset$, which means that $\p$ has positive grade by \cite[Exercise 1.2.27]{BH}.
In view of Lemma \ref{torext}, the implication (2) $\Rightarrow$ (1) (resp. (3) $\Rightarrow$ (1)) follows from the same argument as in the proof of Proposition \ref{1} (resp. Proposition \ref{11}(1)).
\end{proof}

%%%%%%%%%%%%%%%%%%%%%%%%%%%%%%%%%%%%%%%%%%%%%%%
\section{A fundamental question on IKE-closed subcategories}\label{s3}

In this section, we introduce the notion of IKE-closed subcategories of $\mod R$, and pose a natural and fundamental question about the comparison of them with torsionfree subcategories of $\mod R$.
We make some observations concerning the question, and give a couple of affirmative answers to the question.

\begin{dfn}
Let $\X$ be a subcategory of $\mod R$.
\begin{enumerate}[\rm(1)]
\item
We say that $\X$ is {\em torsionfree} if it is closed under subobjects and extensions.
This name comes from the fact that the torsionfree $R$-modules form a torsionfree subcategory of $\mod R$.
%(Be careful not to confuse a torsionfree subcategory with a torsionfree module; they are different notions.)
\item
We say $\X$ is {\it IKE-closed} if it is closed under images, kernels and extensions.
Note that if $\X$ is IKE-closed, then it is closed under direct summands (indeed, $\X$ is closed under direct summands if it is closed under finite direct sums and kernels, by a similar argument to the last part of Remark \ref{2}(1)).
\end{enumerate}
\end{dfn}

The remark below says that the IKE-closed property is preserved under taking factor rings.

\begin{rem}\label{e}
Let $I$ be an ideal of $R$.
Let $\X$ be an IKE-closed subcategory of $\mod R$.
We then denote by $\X\cap\mod R/I$ the subcategory of $\mod R/I$ consisting of $R/I$-modules that belong to $\X$ as $R$-modules.
Then $\X\cap\mod R/I$ is an IKE-closed subcategory of $\mod R/I$.
This is a consequence of the fact that images, kernels and extensions of $R/I$-modules are images, kernels and extensions of $R$-modules, respectively.
\end{rem}

For a subcategory $\X$ of $\mod R$, we set $\ass\X=\bigcup_{X\in\X}\ass X$.
For a subset $\Phi$ of $\spec R$, we denote by $\ass^{-1} \Phi$ the subcategory of $\mod R$ consisting of $R$-modules $M$ with $\ass M \subseteq \Phi$.
By virtue of \cite[Theorem 4.1]{wide}, the assignments $\X\mapsto\ass\X$ and $\Phi\mapsto\ass^{-1}\Phi$ give a one-to-one correspondence between the torsionfree subcategories of $\mod R$ and the subsets of $\spec R$.

Evidently, every torsionfree subcategory of $\mod R$ is IKE-closed.
Thus it is natural to ask the following.

\begin{ques}\label{qIKE}
Is every IKE-closed subcategory of $\mod R$ torsionfree?
Equivalently, does the equality $\X = \ass^{-1}(\ass \X)$ hold for all IKE-closed subcategories $\X$ of $\mod R$\,?
\end{ques}

The following examples indicate that the assumption in the above question is reasonable.

\begin{ex}
\begin{enumerate}[\rm(1)]
\item
Let $(R,\m,k)$ be a local ring which is not a field, and $\X$ the subcategory of $\mod R$ consisting of modules annihilated by $\m$.
Then $\X$ is closed under subobjects, whence it is closed under kernels and images.
However, $\X$ is not closed under extensions.
Indeed, there is an exact sequence $0\to \m/\m^2\to R/\m^2\to R/\m\to0$ and we have $\m/\m^2,R/\m\in\X$ while $R/\m^2\notin\X$.
This example shows that Question \ref{qIKE} has a negative answer if the assumption ``closed under extensions" is extracted.
\item
Let $(R,\m)$ be a Cohen--Macaulay local ring of dimension at least two.
Let $\X$ be the subcategory of $\mod R$ consisting of modules with depth at least two.
Then $\X$ is closed under kernels and extensions by the depth lemma.
This subcategory is not closed under subobjects.
Indeed, $R$ belongs to $\X$ but its submodule $\m$ does not since it has depth one.
This example shows that Question \ref{qIKE} has a negative answer if the assumption ``closed under images" is extracted.
\end{enumerate}
\end{ex}

Here we state three lemmas.
The first one should be well-known to experts, while the second one is used in the next section as well.
The proof of the third one is taken from \cite[Lemma 4.2]{wide}.

\begin{lem}\label{aslem1}
Let $M \in \mod R$.
Let $A, B$ be subsets of $\ass M$ such that $\ass M = A \sqcup B$.
Then there exist $R$-modules $L, N$ with $\ass L = A$ and $\ass N = B$ that fit into an exact sequence $0 \to L \to M \to N \to 0$.
\end{lem}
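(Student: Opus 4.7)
The plan is to prove Lemma \ref{aslem1} by exploiting a minimal primary decomposition of the zero submodule of $M$. Write $\ass M = \{\p_1,\dots,\p_n\}$, and, reindexing if necessary, assume $A = \{\p_1,\dots,\p_r\}$ and $B = \{\p_{r+1},\dots,\p_n\}$. Take a minimal primary decomposition
$$
0 \;=\; Q_1 \cap \cdots \cap Q_n
$$
of the zero submodule of $M$, where $Q_i$ is $\p_i$-primary in $M$; this exists because $R$ is noetherian. Recall that by definition of primary submodule in the module setting, $\ass(M/Q_i)=\{\p_i\}$ for each $i$.

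Next I would set
$$
L \;=\; \bigcap_{i=r+1}^{n} Q_i, \qquad N \;=\; M/L,
$$
which gives the desired short exact sequence $0\to L\to M\to N\to 0$. The two containments $\ass L\subseteq A$ and $\ass N\subseteq B$ should fall out from standard embeddings into direct sums of coprimary modules. For $N$, the map $M/L\hookrightarrow \bigoplus_{i=r+1}^{n} M/Q_i$ (coming from the quotient maps and the fact that $\bigcap_{i=r+1}^{n}Q_i = L$) gives $\ass N \subseteq \bigcup_{i=r+1}^{n}\ass(M/Q_i) = B$. For $L$, observe that $L\cap(Q_1\cap\cdots\cap Q_r)=\bigcap_{i=1}^{n}Q_i=0$, so $L$ embeds into $M/(Q_1\cap\cdots\cap Q_r)$, which in turn embeds into $\bigoplus_{i=1}^{r}M/Q_i$; thus $\ass L\subseteq A$.

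Finally, from the short exact sequence, $\ass M\subseteq\ass L\cup\ass N$. Combining with the previous step gives
$$
A\sqcup B \;=\; \ass M \;\subseteq\; \ass L\cup \ass N \;\subseteq\; A\cup B,
$$
and since $A$ and $B$ are disjoint, both inclusions $\ass L\subseteq A$ and $\ass N\subseteq B$ must be equalities. There is no real obstacle here; the only point requiring a moment of care is checking that the disjointness of $A$ and $B$ together with $\ass M=A\sqcup B$ forces the containments to saturate, which is immediate from elementary set theory.
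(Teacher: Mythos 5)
Your proof is correct, but it takes a genuinely different route from the paper's. The paper disposes of this lemma by citing Goto--Watanabe and sketching the maximal-submodule construction: take $L$ maximal among submodules $L'$ of $M$ with $\ass L'\subseteq A$, set $N=M/L$, and then $\ass L=A$ and $\ass N=\ass M\setminus A$ (the verification being delegated to the cited lemma, whose key point is that maximality of $L$ forces $\ass(M/L)\cap A=\emptyset$). You instead take a minimal primary decomposition $0=Q_1\cap\cdots\cap Q_n$ with $Q_i$ being $\p_i$-primary, put $L=\bigcap_{\p_i\in B}Q_i$, and read off $\ass N\subseteq B$ from the embedding $M/L\hookrightarrow\bigoplus_{\p_i\in B}M/Q_i$ and $\ass L\subseteq A$ from $L\cap\bigl(\bigcap_{\p_i\in A}Q_i\bigr)=0$, which embeds $L$ into $\bigoplus_{\p_i\in A}M/Q_i$; your closing step, upgrading these inclusions to equalities via $\ass M\subseteq\ass L\cup\ass N$ together with the disjointness of $A$ and $B$, is a clean observation that lets you avoid reproving anything like the maximality argument. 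What each approach buys: yours is fully self-contained, resting only on the existence and first uniqueness theorem for primary decompositions of finitely generated modules over a noetherian ring plus standard behavior of $\ass$ under submodules, finite direct sums, and short exact sequences, whereas the paper's maximal-submodule argument is shorter, needs no primary decomposition at all, and is the form in which the statement appears in the cited reference. (The degenerate cases $A=\emptyset$ or $B=\emptyset$, where one of your intersections is empty and $L$ is $M$ or $0$, are trivially consistent with your argument, though you could note them explicitly.)
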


\begin{proof}
The lemma follows from \cite[Lemma 2.27]{GW}.
As it is written in Japanese, we explain a bit more for the convenience of the reader.
Let $L$ be a maximal element with respect to the inclusion relation of the set of submodules $L'$ of $M$ with $\ass L'\subseteq A$, and put $N=M/L$.
Then $\ass L=A$ and $\ass N=\ass M\setminus A$.
\end{proof}

\begin{lem}\label{remim0}
Let $\X$ be a subcategory of $\mod R$ closed under direct sums and images.
Let $X\in\X$.
Let $M$ be a submodule of $X^{\oplus n}$ with $n\ge0$.
If there is an epimorphism $\pi:X^{\oplus m}\to M$ with $m\ge0$, then $M\in\X$.
\end{lem}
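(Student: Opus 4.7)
The plan is to exhibit $M$ as the image of a single morphism between objects already known to lie in $\X$, and then invoke the closure of $\X$ under images directly. Since $\X$ is closed under direct sums and $X\in\X$, both $X^{\oplus m}$ and $X^{\oplus n}$ belong to $\X$. So we just need a homomorphism $X^{\oplus m}\to X^{\oplus n}$ whose image is exactly $M$.

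The natural candidate is to postcompose the given epimorphism $\pi:X^{\oplus m}\to M$ with the inclusion $\iota:M\hookrightarrow X^{\oplus n}$, producing $\varphi=\iota\circ\pi:X^{\oplus m}\to X^{\oplus n}$. Since $\iota$ is injective, the image of $\varphi$ coincides with $\iota(M)$, which is $M$ under the identification given by $\iota$. Hence $M\cong\image(\varphi)$ belongs to $\X$ because $\X$ is closed under images. The cases $m=0$ or $n=0$ (forcing $M=0$) are harmless since $0$ is the image of the zero morphism between any two objects of $\X$.

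There is essentially no obstacle here: the proof is a one-line construction. The only subtlety worth recording is that the definition of ``closed under images'' in the paper's convention requires the source and target of the morphism to lie in $\X$, which is precisely why one needs both $X^{\oplus m}$ and $X^{\oplus n}$ (not just $X^{\oplus m}$ and $M$) to be in $\X$; the closure of $\X$ under direct sums is exactly what supplies this.
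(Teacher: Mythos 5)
Your proof is correct and is essentially the same as the paper's: you form the composite $X^{\oplus m}\xrightarrow{\pi}M\hookrightarrow X^{\oplus n}$, observe that its image is $M$, and invoke closedness under direct sums and images. Nothing to add.
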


\begin{proof}
Let $\theta:M\to X^{\oplus n}$ be the inclusion map.
Let $f$ be the composite map $\theta\pi:X^{\oplus m}\to X^{\oplus n}$.
We then have $\image f=M$.
Since $\X$ is closed under direct sums and images, the module $M$ belongs to $\X$.
\end{proof}

\begin{lem}\label{18}
Let $\X$ be a subcategory of $\mod R$ closed under images and extensions.
Let $\p$ be a prime ideal of $R$, and let $M$ be an $R$-module.
If $R/\p$ belongs to $\X$ and $\ass M = \{\p\}$, then $M$ belongs to $\X$.	
\end{lem}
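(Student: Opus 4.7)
My plan is to embed $M$ into its localization $M_\p$, lift a composition series from $M_\p$ to a finite filtration of $M$, and then identify the successive quotients with ideals of $R/\p$ in order to feed them to the image-closure hypothesis.

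First, since $\ass M=\{\p\}$, no element of $R\setminus\p$ is a zero-divisor on $M$, so the canonical localization map $M\hookrightarrow M_\p$ is injective. Because $\ass_{R_\p}M_\p=\{\p R_\p\}$ (the maximal ideal of $R_\p$), the module $M_\p$ has finite length over $R_\p$, and so admits a composition series
$$
0=N_0\subsetneq N_1\subsetneq\cdots\subsetneq N_n=M_\p
$$
whose quotients are all isomorphic to $\kappa(\p)$. Setting $M_j:=N_j\cap M$ inside $M_\p$ and using $M_j\cap N_{j-1}=M_{j-1}$, the composite $M_j\hookrightarrow N_j\twoheadrightarrow N_j/N_{j-1}$ yields an injection $M_j/M_{j-1}\hookrightarrow\kappa(\p)$. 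The result is a finite filtration $0=M_0\subseteq M_1\subseteq\cdots\subseteq M_n=M$ whose successive quotients are finitely generated (since $M$ is noetherian) and embed into $\kappa(\p)$.

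The crux of the argument is to show that every finitely generated $R$-submodule $Q$ of $\kappa(\p)$ lies in $\X$. Since $\p$ acts as zero on $\kappa(\p)=\mathrm{Frac}(R/\p)$, the module $Q$ is a finitely generated torsion-free $R/\p$-submodule of $\mathrm{Frac}(R/\p)$, and multiplying by a common denominator (which acts injectively on $\kappa(\p)$) shows $Q$ is isomorphic, as an $R/\p$-module and hence as an $R$-module, to an ideal $I$ of $R/\p$. Choosing generators $t_1,\dots,t_k$ of $I$, the homomorphism $(R/\p)^{\oplus k}\to R/\p$ sending $(r_i)\mapsto\sum r_i t_i$ has image $I$. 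Because $R/\p\in\X$, $\X$ is closed under finite direct sums (via the split extensions $0\to A\to A\oplus B\to B\to 0$), and $\X$ is closed under images, I conclude $I\in\X$, and therefore each $M_j/M_{j-1}\in\X$.

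Finally, closure of $\X$ under extensions applied inductively along the filtration $0=M_0\subseteq\cdots\subseteq M_n=M$ yields $M\in\X$. The main obstacle is the middle paragraph: turning the abstract embedding $M_j/M_{j-1}\hookrightarrow\kappa(\p)$ into an honest isomorphism with an ideal of $R/\p$, which is what allows the single hypothesis $R/\p\in\X$ together with image closure to produce all the filtration quotients inside $\X$.
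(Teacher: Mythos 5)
Your proof is correct, and it takes a genuinely different route from the paper's. The paper argues by contradiction: assuming $M\notin\X$, it builds a descending chain $M=M_0\supseteq M_1\supseteq\cdots$ where $M_{n+1}$ is the kernel of the map $M_n\to(R/\p)^{\oplus s_n}$ determined by a generating set of $\Hom_R(M_n,R/\p)$; since the image of that map is both a quotient of and a submodule of finite direct sums of copies of $R/\p$, it lies in $\X$, so extension-closure propagates $M_n\notin\X$ down the chain; localizing at $\p$ and using that $M_\p$ is artinian, the chain stabilizes, forcing $\Hom_{R_\p}((M_n)_\p,\kappa(\p))=0$, hence $(M_n)_\p=0$ by Nakayama, contradicting $\ass M_n=\{\p\}$. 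You instead argue directly: you lift a composition series of the finite-length module $M_\p$ to a finite filtration of $M$, identify each quotient (a finitely generated submodule of $\kappa(\p)$, killed by $\p$) with an ideal of the domain $R/\p$ by clearing denominators, and then place such an ideal in $\X$ as the image of a map $(R/\p)^{\oplus k}\to R/\p$ (finite direct sums being available from extension-closure via split exact sequences), concluding by extension-closure along the filtration. Both proofs exploit the same two ingredients --- the finite length of $M_\p$ and the fact that modules realized as images of maps between finite sums of copies of $R/\p$ lie in $\X$ --- but your version is constructive and terminates in finitely many explicit steps, and as a by-product exhibits a finite filtration of $M$ whose quotients are ideals of $R/\p$, whereas the paper's chain argument avoids identifying the subquotients at the cost of an indirect stabilization-plus-Nakayama contradiction. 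Minor points worth noting (neither is a gap): a filtration quotient could a priori be zero, but $0\in\X$ since it is the image of the zero endomorphism of $R/\p$; and the isomorphism-closedness you implicitly use is covered by the paper's standing convention that subcategories are strictly full.
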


\begin{proof}
Assume $M \not\in \X$.
We want to make a filtration $\cdots \subseteq M_n \subseteq M_{n-1} \subseteq \cdots  \subseteq M_0= M$ of submodules of $M$ with $M_n \not\in \X$ and $\ass M_n=\{\p\}$ for all $n$.
We shall do this by induction on $n$.
Assume that we have constructed $M_n$.
Let $f_{n1}, \ldots, f_{n s_n}$ be a system of generators of $\Hom_R(M_n, R/\p)$.
Let $M_{n+1}$ be the kernel of the map $f={}^\t(\begin{smallmatrix}f_{n 1}&\cdots& f_{n s_n}\end{smallmatrix}):M_n \to (R/\p)^{\oplus s_n}$.
As there is a surjection from a direct sum of copies of $R/\p$ to $\image f$ and $\X$ is closed under images and extensions, Lemma \ref{remim0} implies $\image f\in\X$, whence $M_n \notin \X$ implies $M_{n+1} \not\in \X$.
Note that $\ass M_{n+1}=\ass M_n = \{\p \}$.
We thus obtain a desired filtration.

Localizing the filtration at $\p$, we get a descending chain $\cdots \subseteq (M_n)_\p \subseteq (M_{n-1})_\p \subseteq \cdots  \subseteq (M_0)_\p=M_\p$ of $R_\p$-modules.
Since $M_\p$ is an artinian $R_\p$-module, the descending chain stabilizes, i.e., $(M_n)_\p = (M_{n+1})_\p = \cdots$ for some integer $n$.
Then $\Hom_{R_\p}((M_n)_\p, \kappa(\p)) = \sum_{i=1}^{s_n} R_\p f_{n i} =0$, and therefore $(M_n)_\p = 0$.
This contradicts the fact that $\ass M_n = \{\p \}$.
Consequently, the module $M$ belongs to $\X$. 
\end{proof}

The following proposition plays a central role in both this section and the next section.

\begin{prop}\label{aslem2}
Let $\X$ be a subcategory of $\mod R$ closed under images and extensions.
Let $\Phi$ be a set of prime ideals of $R$.
Suppose that $R/\p \in \X$ for all $\p \in \Phi$.
Then $\ass^{-1} \Phi$ is contained in $\X$.
\end{prop}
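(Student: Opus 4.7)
The plan is to prove this by induction on the size of $\ass M$ for $M \in \ass^{-1}\Phi$, peeling off one associated prime at a time using Lemma \ref{aslem1} and then invoking Lemma \ref{18} to place the resulting ``single associated prime'' piece inside $\X$.

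More concretely, fix $M \in \ass^{-1}\Phi$. Since $R$ is noetherian and $M$ is finitely generated, $\ass M$ is a finite subset of $\Phi$; write $n = |\ass M|$. If $n = 0$, then $M = 0$, which is trivially in $\X$ (the case $\Phi = \emptyset$ is degenerate, and otherwise $0$ is the image of $R/\p \to 0$ for any $\p \in \Phi$, which lies in $\X$ by the image-closure assumption). If $n \geq 1$, pick any $\p \in \ass M \subseteq \Phi$. Apply Lemma \ref{aslem1} with $A = \{\p\}$ and $B = \ass M \setminus \{\p\}$ to obtain an exact sequence
$$
0 \to L \to M \to N \to 0
$$
with $\ass L = \{\p\}$ and $\ass N = \ass M \setminus \{\p\}$. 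By assumption $R/\p \in \X$, so Lemma \ref{18} (which requires precisely that $\X$ is closed under images and extensions) implies $L \in \X$. Since $|\ass N| = n - 1$ and $\ass N \subseteq \Phi$, the induction hypothesis gives $N \in \X$. Finally, because $\X$ is closed under extensions, $M \in \X$ as required.

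There is no real obstacle here: the content of the argument is packaged into the two preceding lemmas, and the proposition is essentially a bookkeeping induction that combines them. The only mild subtlety is ensuring that the hypotheses of Lemma \ref{18} are genuinely met for $L$ (they are, since $\ass L = \{\p\}$ and $R/\p \in \X$) and that the subcategory $\X$ need not be assumed closed under direct sums at any point in the argument, so no hidden hypothesis is invoked.
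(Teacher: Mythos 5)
Your proof is correct and follows essentially the same route as the paper: peel off one associated prime via Lemma \ref{aslem1}, place that piece in $\X$ via Lemma \ref{18}, and finish by induction on $|\ass M|$ together with closure under extensions. The only difference is that you spell out the base case $n=0$, which the paper leaves implicit.
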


\begin{proof}
Let $M \in \ass^{-1} \Phi$.
There are prime ideals $\p_1, \ldots, \p_n \in\Phi$ such that $\ass M = \{\p_1, \ldots, \p_n\}$.
Lemma \ref{aslem1} yields an exact sequence $0\to L\to M\to N\to0$ with $\ass L=\{\p_1\}$ and $\ass N=\{\p_2,\dots,\p_n\}$.
By Lemma \ref{18} and induction on $n$, we have $L,N\in\X$.
As $\X$ is closed under extensions, $M$ belongs to $\X$.
\end{proof}

To get applications of the above proposition, we establish a lemma.

\begin{lem}\label{max}
Let $\X$ be a subcategory of $\mod R$.
Then the following two statements hold true.
\begin{enumerate}[\rm(1)]
\item
If $\X$ is $\Hom$-ideal and closed under direct summands, then $R/\m \in \X$ for any $\m\in\Max R\cap\ass \X$.
\item
If $\X$ contains $R$ and is closed under finite direct sums and images, then $\X$ is closed under subobjects.
\end{enumerate}
\end{lem}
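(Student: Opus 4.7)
The plan is to prove the two parts independently via short, direct arguments that exploit the given closure properties together with the fact that $R/\m$ is a field in part (1).

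For (1), I would fix $\m\in\Max R\cap\ass\X$ and pick $X\in\X$ with $\m\in\ass X$. Setting $Y=\Hom_R(R/\m,X)$, the $\Hom$-ideal hypothesis puts $Y$ in $\X$. Under the usual identification, $Y$ is the submodule of $X$ annihilated by $\m$, which is nonzero since $\m\in\ass X$, and which carries a natural structure of vector space over the residue field $R/\m$. Consequently, $Y\cong(R/\m)^{\oplus k}$ as $R$-modules for some $k\ge1$. Since $\X$ is closed under direct summands, $R/\m$ itself must belong to $\X$.

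For (2), I would take $X\in\X$ and an arbitrary submodule $Y\subseteq X$. Choosing a finite generating system $y_1,\dots,y_n$ of $Y$ yields a surjection $R^{\oplus n}\twoheadrightarrow Y$; composing with the inclusion $Y\hookrightarrow X$ produces an $R$-linear map $f\colon R^{\oplus n}\to X$ with $\image f=Y$. The hypotheses that $R\in\X$ and that $\X$ is closed under finite direct sums give $R^{\oplus n}\in\X$, and then closure under images, applied to $f$, forces $Y=\image f\in\X$.

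I do not expect any serious obstacle here; both parts amount to writing down the appropriate morphism and invoking the closure hypotheses. The only point that requires a word of justification is the identification in (1) of $\Hom_R(R/\m,X)$ as a free $R/\m$-module, which is automatic because $\m$ is maximal and thus $R/\m$ is a field.
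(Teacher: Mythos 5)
Your proposal matches the paper's proof essentially verbatim: in (1) you take $X\in\X$ with $\m\in\ass X$, observe that $\Hom_R(R/\m,X)$ is a nonzero $R/\m$-vector space lying in $\X$ by the $\Hom$-ideal hypothesis, and extract $R/\m$ as a direct summand; in (2) you compose a surjection $R^{\oplus n}\twoheadrightarrow Y$ with the inclusion $Y\hookrightarrow X$ and apply closure under finite direct sums and images. Both arguments are correct and coincide with the paper's.
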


\begin{proof}
(1) Choose an $R$-module $X\in \X$ with $\m \in\ass X$.
Then $\Hom_R(R/\m, X)$ is a nonzero module that belongs to $\X$.
This implies that $R/\m$ is in $\X$, since $\Hom_R(R/\m, X)$ is an $R/\m$-vector space.

(2) Let $X$ be a module in $\X$ and $M$ a submodule of $X$.
Composing the inclusion map $M\to X$ with a surjection $R^{\oplus n}\to M$, we get a map $f:R^{\oplus n}\to X$ with $R^{\oplus n},X\in\X$.
We then have $M=\image f\in\X$.
\end{proof}

We obtain a corollary of the above proposition, which gives an affirmative answer to Question \ref{qIKE}.

\begin{cor}\label{d}
Suppose that $R$ has dimension at most one.
Let $\X$ be an IKE-closed subcategory of $\mod R$ such that $\Max R \subseteq \ass \X$.
Then the equality $\X=\ass^{-1}(\ass\X)$ holds.
Therefore, $\X$ is torsionfree.
\end{cor}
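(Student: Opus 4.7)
The plan is to prove the equality $\X=\ass^{-1}(\ass\X)$; the torsionfree conclusion is then immediate from the classification of torsionfree subcategories recalled just before Question \ref{qIKE}. The inclusion $\X\subseteq\ass^{-1}(\ass\X)$ is formal, so by Proposition \ref{aslem2} the whole task reduces to showing $R/\p\in\X$ for every $\p\in\ass\X$. Since $\dim R\le1$, each such $\p$ is either maximal or a non-maximal minimal prime, and these two cases are treated separately.

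The maximal case is routine. Being IKE-closed, $\X$ is closed under finite direct sums, kernels, images, extensions and direct summands; Remark \ref{2}(2) upgrades this to $\Hom$-idealness. Since $\Max R\subseteq\ass\X$ by hypothesis, Lemma \ref{max}(1) yields $R/\m\in\X$ for every $\m\in\Max R$, and Proposition \ref{aslem2} applied with $\Phi=\Max R$ already places every module whose associated primes all lie in $\Max R$ inside $\X$.

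The main obstacle is the non-maximal case. Fix $\p\in\ass\X$ that is not maximal; then $\p$ is minimal and $R/\p$ is a one-dimensional domain. Choose $X\in\X$ with $\p\in\ass X$ and set $Y=\Hom_R(R/\p,X)\in\X$. Because $\p Y=0$ and $\supp Y\subseteq\V(\p)$, the set $\ass Y$ consists of $\p$ together with some maximal ideals, so Lemma \ref{aslem1} produces an exact sequence $0\to L\to Y\to N\to0$ with $\ass L=\{\p\}$ and $\ass N\subseteq\Max R$; the previous paragraph puts $N$ in $\X$, and kernel-closure puts $L$ in $\X$. The module $L$ is then a nonzero torsionfree finitely generated $R/\p$-module, say of rank $r\ge1$. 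Lifting a $\kappa(\p)$-basis of $L_\p$ to elements $\ell_1,\dots,\ell_r\in L$ yields a free submodule $L'':=\sum_{i=1}^{r}(R/\p)\ell_i\cong(R/\p)^{\oplus r}$ with $(L/L'')_\p=0$, whence $\ass(L/L'')\subseteq\V(\p)\setminus\{\p\}\subseteq\Max R$. The maximal case puts $L/L''$ in $\X$, kernel-closure applied to the surjection $L\to L/L''$ puts $L''$ in $\X$, and direct-summand closure finally extracts $R/\p\in\X$.

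Combining the two cases, $R/\p\in\X$ for every $\p\in\ass\X$, and Proposition \ref{aslem2} with $\Phi=\ass\X$ supplies the remaining inclusion $\ass^{-1}(\ass\X)\subseteq\X$. I expect the delicate step to be the rank-$r$ trimming inside the non-maximal case: the $R/\p$-module $L$ produced by $\Hom_R(R/\p,-)$ need not be cyclic, so one must descend from $L$ to an honest free $R/\p$-summand $(R/\p)^{\oplus r}\in\X$ before $R/\p$ itself becomes visible to the closure operations defining IKE-closedness.
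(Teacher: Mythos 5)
Your proof is correct and takes essentially the same route as the paper: reduce to showing $R/\p\in\X$ for each $\p\in\ass\X$ via Proposition~\ref{aslem2}, handle maximal primes via Remark~\ref{2}(2) and Lemma~\ref{max}(1), and for a minimal $\p$ pass to $\Hom_R(R/\p,X)\in\X$, extract a free $(R/\p)^{\oplus r}$-submodule with finite-length cokernel, and conclude by kernel-closure and direct-summand-closure. The only deviation is your detour through Lemma~\ref{aslem1}, which first trims $Y=\Hom_R(R/\p,X)$ to its $\p$-primary part $L$ before lifting a $\kappa(\p)$-basis; the paper skips this and lifts a basis directly inside $H=\Hom_R(R/\p,X)$, which works because injectivity of $(R/\p)^{\oplus r}\to H$ can be checked at $\p$ even when $H$ has finite-length torsion. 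So your extra step is harmless but unnecessary — the rank-$r$ free submodule already sits inside $H$ itself.
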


\begin{proof}
It is observed from Remark \ref{2}(2) and Lemma \ref{max}(1) that the module $R/\m$ belongs to $\X$ for every $\m \in \Max R$.
By Proposition \ref{aslem2}, it suffices to show that $R/\p\in\X$ for $\p\in\ass\X$.
Take a monomorphism $R/\p\to X$ with $X\in\X$.
Put $H=\Hom_R(R/\p,X)$ and $r=\rank_{R/\p}H>0$.
The $\Hom$-ideal property of $\X$ shows that $H\in\X$.
There is an exact sequence $0\to(R/\p)^{\oplus r}\to H\to C\to0$ of $R/\p$-modules such that $C$ is torsion.
As $\dim R/\p\le1$, the module $C$ has finite length.
Since $\X$ contains $R/\m$ for every $\m \in \Max R$ and is closed under extensions, $C$ belongs to $\X$.
It follows that $(R/\p)^{\oplus r}$ is in $\X$, and so is $R/\p$.
\end{proof}

We get one more corollary which also answers Question \ref{qIKE} in the affirmative; the corollary below particularly says that all the IKE-closed subcategories of $\mod R$ containing $R$ are torsionfree.

\begin{cor}
The assignments $\X \mapsto \ass \X,\,\Phi \mapsto \ass^{-1} \Phi$ give a one-to-one correspondence between
\begin{itemize}
\item
the subcategories of $\mod R$ closed under images and extensions and containing $R$, and
\item
the subsets of $\spec R$ containing $\ass R$.
\end{itemize}
\end{cor}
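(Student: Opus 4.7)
The plan is to reduce the corollary to the classification of torsionfree subcategories cited just before the statement, which gives a bijection between torsionfree subcategories of $\mod R$ and subsets of $\spec R$ via $\X\mapsto\ass\X$ and $\Phi\mapsto\ass^{-1}\Phi$. Thus it suffices to show that a subcategory $\X$ of $\mod R$ belongs to the family in the corollary if and only if $\X$ is a torsionfree subcategory containing $R$, and then observe that on the category side the condition $R\in\X$ translates to $\ass R\subseteq\ass\X$, while on the subset side it translates to $\ass R\subseteq\Phi$.

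First I would show that if $\X$ contains $R$ and is closed under images and extensions, then $\X$ is torsionfree. Closure under extensions applied to the split exact sequences $0\to M\to M\oplus N\to N\to0$ yields closure under finite direct sums. Combined with $R\in\X$ and closure under images, Lemma \ref{max}(2) then gives closure under subobjects, so $\X$ is torsionfree.

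The reverse implication is essentially formal: a torsionfree subcategory is closed under subobjects, and since the image of any homomorphism $M\to N$ is a submodule of $N$, such a subcategory is automatically closed under images; closure under extensions is built into the definition. Hence every torsionfree subcategory containing $R$ lies in the family of the corollary. Under the bijection of \cite[Theorem 4.1]{wide}, the condition $R\in\X$ becomes $\ass R\subseteq\ass\X$, and conversely $R\in\ass^{-1}\Phi$ is equivalent to $\ass R\subseteq\Phi$; restricting Takahashi's bijection to this condition yields the desired one. I do not anticipate any substantive obstacle beyond what has already been established in Lemma \ref{max}(2) and in \cite[Theorem 4.1]{wide}.
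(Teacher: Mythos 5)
Your proof is correct and hinges on the same key ingredient as the paper's, namely Lemma \ref{max}(2). The only difference is one of packaging: you first establish that the subcategories in question are exactly the torsionfree subcategories containing $R$ and then restrict Takahashi's bijection \cite[Theorem 4.1]{wide}, whereas the paper verifies the two nontrivial inclusions $\ass(\ass^{-1}\Phi)\supseteq\Phi$ and $\ass^{-1}(\ass\X)\subseteq\X$ directly, invoking Proposition \ref{aslem2} for the latter rather than the external classification. Both routes are sound; yours is marginally more conceptual (it makes the identification with torsionfree subcategories explicit), while the paper's stays self-contained within the tools it has just developed.
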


\begin{proof}
Let $\X$ be a subcategory of $\mod R$ closed under images and extensions and containing $R$, and let $\Phi$ be a subset of $\spec R$ containing $\ass R$.
It suffices to verify $\ass(\ass^{-1}\Phi)\supseteq\Phi$ and $\ass^{-1}(\ass\X)\subseteq\X$.
The former follows from the fact that $\ass R/\p=\{\p\}$.
To show the latter, pick any $\p\in\ass\X$.
Then there is an injective homomorphism $R/\p\to X$ with $X\in\X$.
By Lemma \ref{max}(2) the subcategory $\X$ of $\mod R$ is closed under subobjects, and hence $R/\p$ belongs to $\X$.
Proposition \ref{aslem2} implies $\ass^{-1}(\ass\X)\subseteq\X$.
\end{proof}

%%%%%%%%%%%%%%%%%%%%%%%%%%%%
\section{A reduction of the question and excellent rings}\label{s4}

In this section, we proceed with the investigation of Question \ref{qIKE}.
We reduce it to a more accessible question.
When the ring $R$ is excellent, we deduce a certain conclusion from the assumption of the new question, and then give a couple of positive answers to the original Question \ref{qIKE}.

Denote by $\tf R$ and $\fl R$ the subcategories of $\mod R$ consisting of torsionfree $R$-modules and consisting of modules of finite length, respectively.
When $R$ is a Cohen--Macaulay local ring of dimension one, $\tf R$ coincides with the subcategory $\cm(R)$ consisting of maximal Cohen--Macaulay $R$-modules.

\begin{rem}\label{3}
\begin{enumerate}[(1)]
\item
One has $\ass^{-1}(\ass R)=\tf R$.
In particular, $\ass^{-1}\{0\}=\tf R$ if $R$ is a domain.
\item
One has $\ass^{-1}(\Max R)=\fl R$.
In particular, $\ass^{-1}\{\m\}=\fl R$ if $(R,\m)$ is local.
\end{enumerate}
\end{rem}

We state a question, which is what we want to consider in this section.

\begin{ques}\label{19}
Let $R$ be a domain.
Let $\X$ be an IKE-closed subcategory of $\mod R$ with $\ass\X=\{0\}$.
Does then $R$ belong to $\X$\,?
\end{ques}

The following proposition says that our original Question \ref{qIKE} is reduced to the above Question \ref{19}.

\begin{prop}\label{21}
Question \ref{19} is equivalent to Question \ref{qIKE}.
\end{prop}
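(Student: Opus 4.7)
The proof proceeds in two directions. The easy one (Question \ref{qIKE} $\Rightarrow$ Question \ref{19}) is immediate: if $R$ is a domain and $\X$ is IKE-closed in $\mod R$ with $\ass\X=\{0\}$, then Question \ref{qIKE} forces $\X=\ass^{-1}(\ass\X)=\ass^{-1}\{0\}=\tf R$ by Remark \ref{3}(1), whence $R\in\X$. For the converse, assume Question \ref{19} and let $\X$ be IKE-closed in $\mod R$; the inclusion $\X\subseteq\ass^{-1}(\ass\X)$ is clear, so by Proposition \ref{aslem2} it suffices to prove $R/\p\in\X$ for each $\p\in\ass\X$. Fix such a $\p$, choose $X\in\X$ with $\p\in\ass X$, and work inside the finite poset $\ass X$ ordered by reverse inclusion. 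The plan is to show by induction on this finite poset that $R/\q\in\X$ for every $\q\in\ass X$; setting $\q=\p$ then gives what we want.

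For the inductive step, fix $\q\in\ass X$ and assume $R/\q'\in\X$ for all $\q'\in\ass X$ with $\q'\supsetneq\q$. Choose generators $a_1,\dots,a_n$ of $\q$ and form $N=(0:_X\q)$, which equals the kernel of the map $X\to X^{\oplus n}$, $x\mapsto(a_1x,\dots,a_nx)$. Since $\X$ is closed under finite direct sums (via split extensions) and under kernels, $N\in\X$, and a routine computation gives $\ass N=\ass X\cap\V(\q)=\{\q\}\sqcup B$ with $B=\{\q'\in\ass X:\q'\supsetneq\q\}$. By the induction hypothesis combined with Proposition \ref{aslem2}, $\ass^{-1}B\subseteq\X$. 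Apply Lemma \ref{aslem1} to $N$ to obtain an exact sequence $0\to L\to N\to N'\to 0$ with $\ass L=\{\q\}$ and $\ass N'=B$; then $N'\in\X$, the surjection $N\twoheadrightarrow N'$ is a morphism between objects of $\X$, and its kernel $L$ also lies in $\X$.

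Next, set $L'=(0:_L\q)$, once again the kernel of a map $L\to L^{\oplus n}$ between $\X$-objects, so $L'\in\X$. Since $\q\in\ass L$, there is an injection $R/\q\hookrightarrow L$ whose image lies in $L'$; hence $L'\ne 0$ and $\ass_R L'=\{\q\}$, so $L'$ is a nonzero $R/\q$-module. Now put $S=R/\q$ and let $\Y$ be the full subcategory of $\mod S$ consisting of those $S$-modules $M$ which lie in $\X$ viewed as $R$-modules and satisfy $\ass_S M\subseteq\{0\}$. Using Remark \ref{e} and the fact that an extension of $S$-torsionfree modules is $S$-torsionfree (as $S$ is a domain), $\Y$ is IKE-closed in $\mod S$. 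Since $L'$ is a nonzero object of $\Y$, we have $\ass_S\Y=\{0\}$, and Question \ref{19} applied over $S$ yields $S=R/\q\in\Y\subseteq\X$, finishing the induction.

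The main obstacle is that $\X$ is not assumed closed under subobjects or quotients, so we cannot simply extract the ``$\q$-primary part'' of an associated-prime filtration of $X$ and declare it to lie in $\X$. The trick is to realize this $\q$-primary piece $L$ as a \emph{kernel} of a morphism inside $\X$, which in turn requires the complementary piece $N'$ to already be in $\X$; Proposition \ref{aslem2} together with the inductive hypothesis on strictly larger associated primes supplies exactly this. A secondary subtlety is that $\ass L=\{\q\}$ does not entail $\q L=0$, so we must pass once more to $L'=(0:_L\q)$ before invoking Question \ref{19} over the domain $R/\q$.
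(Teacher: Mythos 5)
Your proof is correct and follows essentially the same route as the paper's: both directions hinge on Proposition \ref{aslem2}, on Lemma \ref{aslem1} combined with closure under kernels to isolate the piece with $\ass=\{\q\}$, and on applying Question \ref{19} over the domain $R/\q$ to the torsionfree part of the restricted subcategory. The only differences are organizational --- you induct over the finite poset $\ass X$ and make the passage to $R/\q$-modules explicit via the annihilator kernels $(0:_X\q)$ and $(0:_L\q)$, whereas the paper picks a maximal bad prime in $\ass\X$ and reduces by ``replacing $R$ with $R/\p$'' --- and these do not change the substance of the argument.
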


\begin{proof}
The assertion of the proposition follows from (1) and (2) below.

(1) Let $R$ be a domain and $\X$ an IKE-closed subcategory of $\mod R$ with $\ass\X=\{0\}$.
Find $0\ne X\in\X$.
We have $\ass X=\{0\}$.
Assume that Question \ref{qIKE} is affirmative.
Then $\X=\ass^{-1}(\ass\X)$ holds.
We have $\ass R=\{0\}=\ass X\subseteq\ass\X$, which implies $R\in\ass^{-1}(\ass\X)=\X$.
Thus Question \ref{19} is affirmative.

(2) Suppose that Question \ref{19} is affirmative.
We want to show that Question \ref{qIKE} is affirmative as well.
Let $\X$ be an IKE-closed subcategory of $\mod R$.
Fix a prime ideal $\p\in\ass\X$.
According to Proposition \ref{aslem2}, we will be done if we deduce $R/\p\in\X$.
So, assume that $R/\p$ is not in $\X$, and choose $\p$ to be maximal, with respect to the inclusion relation, among the prime ideals $\p'\in\ass\X$ with $R/\p'\notin\X$.
Remark \ref{e} implies that $\X\cap\mod R/\p$ is an IKE-closed subcategory of $\mod R/\p$.
If $\q/\p$ is a nonzero prime ideal of $R/\p$ that belongs to $\ass_{R/\p}(\X\cap\mod R/\p)$, then we have $\p\subsetneq\q\in\ass_R\X$, and $R/\q$ is in $\X\cap\mod R/\p$ by the maximality of $\p$.
We want to deduce $R/\p\in\X$, and then we will have a desired contradiction.
Toward this, replacing $R$ with $R/\p$, we may assume that $R$ is a domain and $\p=0$.
Note that $R/\q\in\X$ for all $0\ne\q\in\ass\X$.
As $0\in\ass\X$, there is an $R$-module $X\in\X$ with $0\in\ass X$.
Putting $\Phi=\ass X\setminus\{0\}$, we have $\ass X=\{0\}\sqcup\Phi$.
Lemma \ref{aslem1} gives rise to an exact sequence $0\to M\to X\to N\to0$ of $R$-modules such that $\ass M=\{0\}$ and $\ass N=\Phi$.
Proposition \ref{aslem2} shows $\ass^{-1}\Phi\subseteq\X$, which implies $N\in\X$.
Since $\X$ is closed under kernels, $M$ belong to $\X$.
Using Remark \ref{3}(1), we get $M\in\X\cap\tf R$.
It is now observed that $\ass(\X\cap\tf R)=\{0\}$.
Since $\X$ and $\tf R$ is IKE-closed, so is the intersection $\X\cap\tf R$.
Our assumption that Question \ref{19} is affirmative yields $R\in\X\cap\tf R$.
Hence $R$ is in $\X$ and the proof is completed.
\end{proof}

It should be much easier to think of Question \ref{19} than Question \ref{qIKE}.
Indeed, the former has a stronger assumption but a weaker conclusion than the latter.
For example, the following observation may be useful to deduce that Question \ref{19} has a positive answer.

\begin{ex}
Let $\X$ be a subcategory of $\mod R$ closed under direct summands and extensions.
Let $\m$ be a maximal ideal of $R$ such that $0\ne\m\in\X$.
Then $R$ belongs to $\X$.

In fact, note that there exists an exact sequence $0\to\m^2\to\m\xrightarrow{a}(R/\m)^{\oplus n}\to0$ of $R$-modules with $n>0$.
Take an epimorphism $b:R^{\oplus n}\to(R/\m)^{\oplus n}$.
The pullback diagram of $a$ and $b$ gives rise to an exact sequence $0\to\m^{\oplus n}\to\m^2\oplus R^{\oplus n}\to\m\to0$.
Since $\m$ and $\m^{\oplus n}$ belong to $\X$, so does $R^{\oplus n}$, and so does $R$.
\end{ex}

Here we prepare an easy lemma.

\begin{lem}\label{remim}
Let $\X$ be a subcategory of $\mod R$ closed under direct sums and images.
\begin{enumerate}[\rm(1)]
\item
One has $IX\in\X$ for all $R$-modules $X\in\X$ and all ideals $I$ of $R$.
\item
Let $S$ be a module-finite $R$-algebra.
Then $S$ is in $\X$ if and only if so is every torsionless $S$-module.
\end{enumerate}
\end{lem}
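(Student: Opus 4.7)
The plan is to deduce both parts by realizing the relevant modules as images of $R$-linear maps between objects already in $\X$, so that the hypothesis of closure under images does the work; for (2) I additionally invoke the just-established Lemma \ref{remim0}.

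For (1), I would fix a finite system of generators $a_1,\dots,a_n$ of $I$ and consider the $R$-linear map
$\varphi\colon X^{\oplus n}\to X$ sending $(x_1,\dots,x_n)$ to $a_1x_1+\cdots+a_nx_n$. Its image is exactly $IX$. Since $\X$ is closed under direct sums, $X^{\oplus n}\in\X$, and since $X\in\X$, the closure of $\X$ under images yields $IX=\image\varphi\in\X$.

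For (2), the ``if'' direction is immediate: $S$ is itself a torsionless $S$-module, being a submodule of the rank-one free $S$-module $S$. For the ``only if'' direction, I would assume $S\in\X$ and let $M$ be a torsionless $S$-module. By definition there is an $S$-linear (and hence $R$-linear) embedding $\theta\colon M\hookrightarrow S^{\oplus n}$. Since $M$ is a finitely generated $S$-module (this is part of the convention that all modules are finitely generated, and the fact that $S$ is module-finite over $R$ ensures $M\in\mod R$ is consistent with finite generation over $S$), there is an $S$-linear, hence $R$-linear, epimorphism $\pi\colon S^{\oplus m}\to M$. Taking $X=S$ in Lemma \ref{remim0}, the hypotheses are met and the lemma concludes $M\in\X$.

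The argument is purely formal and I foresee no real obstacle; the only point to check carefully is the compatibility of the $S$-module and $R$-module structures so that the embedding and surjection can be treated as $R$-homomorphisms between modules in $\mod R$, which is automatic from $S$ being module-finite over $R$.
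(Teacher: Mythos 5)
Your proof is correct and follows essentially the same route as the paper: part (1) uses the map $X^{\oplus n}\to X$ induced by generators of $I$, and part (2) uses Lemma \ref{remim0} applied to the torsionless $S$-module sandwiched between a free surjection and a free embedding. (The paper also funnels part (1) through Lemma \ref{remim0}, while you apply closure under images directly since $IX\subseteq X$; this is a cosmetic simplification of the same idea.)
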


\begin{proof}
(1) Let $a_1,\dots,a_n$ be a system of generators of the ideal $I$.
Then we have a surjection $f:X^{\oplus n}\to IX$ given by $f({}^\t(x_1,\dots,x_n))=a_1x_1+\cdots+a_nx_n$.
The assertion follows from Lemma \ref{remim0}.

(2) The ``if'' part holds since $S$ is itself a torsionless $S$-module.
Let us show the ``only if'' part.
Assume $S\in\X$, and let $N$ be a torsionless $S$-module.
Then there is an injective homomorphism $N\to S^{\oplus n}$ with $n\ge0$.
Also, there exists a surjective homomorphism $S^{\oplus m}\to N$.
The assertion follows from Lemma \ref{remim0}.
\end{proof}

We state and prove the proposition below.
It would say something close to the affimativity of Question \ref{19} for such a ring $R$ as in it.

\begin{prop} \label{int}
Let $R$ be an excellent domain of characteristic zero.
Let $S$ be the integral closure of $R$.
Let $\X$ be an IKE-closed subcategory of $\mod R$ such that $\ass\X=\{0\}$.
Then every torsionfree $S$-module belongs to $\X$, and so does the module $\Hom_R(N, R)$ for each $S$-module $N$.
\end{prop}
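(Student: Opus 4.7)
The plan has three parts: (A) reduce the Hom statement to the first statement; (B) reduce the first statement to $S \in \X$; (C) prove $S \in \X$.

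For (A), I would identify $\Hom_R(S, R) \cong \c$ as $S$-modules, where $\c = (R :_R S)$ is the conductor. Every $\phi \in \Hom_R(S, R)$ extends by $K$-linearity to a map $K \to K$ (since $S \otimes_R K = K$ as $K = \mathrm{Frac}(R)$), namely multiplication by $\phi(1)$, and the condition $\phi(S) \subseteq R$ amounts to $\phi(1) \cdot S \subseteq R$, i.e., $\phi(1) \in \c$. Hom-tensor adjunction then gives $\Hom_R(N, R) \cong \Hom_S(N, \c)$ for any $S$-module $N$, and since $\c$ is $S$-torsionfree (being a fractional $S$-ideal), so is $\Hom_S(N, \c)$. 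Thus the Hom statement follows from the first. For (B), any finitely generated $S$-torsionfree module embeds into $L^d$ (with $L = \mathrm{Frac}(S)$) and, after clearing denominators, into $S^d$, so is torsionless over $S$. By Lemma \ref{remim}(2), this reduces (B) to showing $S \in \X$.

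For (C), fix a nonzero $X \in \X$. Then $\ass X \subseteq \ass \X = \{0\}$ forces $X$ to be $R$-torsionfree. Produce an $S$-module in $\X$ in either of two ways: Lemma \ref{remim}(1) gives $\c X \in \X$, which is $S$-stable since $\c$ is an $S$-ideal; or apply $\Hom_R(-, X)$ to a free presentation $R^a \to R^b \to S \to 0$ to express $\Hom_R(S, X)$ as the kernel of a map $X^b \to X^a$, so $\Hom_R(S, X) \in \X$, and this is $S$-torsionfree because the $K$-linear extension of any $\phi: S \to X$ vanishes iff $\phi$ does (using that $X$ is $R$-torsionfree, so $X \otimes_R K \cong K^n$ is $K$-torsionfree). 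Next, I would reduce to $R$-rank one: since $\Hom_R(X, X) \otimes_R K \cong M_n(K)$ where $n$ is the rank of $X$, a suitable endomorphism of $X$ has $K$-extension a rank-one projection, and its image lies in $\X$ and has rank one. After this reduction, both constructions produce rank-one fractional $S$-ideals in $\X$.

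The main obstacle is then to upgrade from a rank-one fractional $S$-ideal $Y \in \X$ to $S \in \X$ itself. My intended approach is to use the characteristic-zero hypothesis to build the inverse fractional ideal $Y^{-1}$ (or a suitable analogue) inside $\X$, for example via trace or duality arguments that supply compatible Hom-constructions, and then to invoke a Steinitz-type identity $Y \oplus Y^{-1} \cong S \oplus S$ for invertible fractional ideals to realize $S$ as a direct summand. The delicate point is producing this inverse using only IKE-closure starting from modules already known to be in $\X$; this is where the characteristic-zero and excellence hypotheses should play their essential role.
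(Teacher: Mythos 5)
Your parts (A) and (B) are sound, and the first half of (C) is also fine: $\Hom_R(S,X)$ is an $S$-torsionfree module in $\X$, and the reduction of the whole proposition to ``$S\in\X$'' via Lemma \ref{remim}(2) is exactly right (though your rank-one reduction is unnecessary; the paper never needs it).

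The genuine gap is the last step, and you have correctly identified where it is. Your sketched fix --- produce an inverse fractional ideal $Y^{-1}$ in $\X$ and invoke $Y\oplus Y^{-1}\cong S\oplus S$ --- would at best work when $S$ is Dedekind (dimension one) and $Y$ is invertible, and even then it is unclear how IKE-closure would put $Y^{-1}$ into $\X$. The paper avoids this entirely by making a more efficient choice of $S$-module. With $Y=\Hom_R(S,X)$, Hom-tensor adjunction gives $\Hom_S(Y,Y)=\Hom_S(Y,\Hom_R(S,X))\cong\Hom_R(Y,X)$, which lies in $\X$ because $\X$ is $\Hom$-ideal (Remark \ref{2}(2)). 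The key external input --- this is where excellence and characteristic zero are used --- is \cite[Proposition A.2 and Corollary A.5]{HL}: for a torsionfree module $Y$ over the normal domain $S$, the natural map $S\to\Hom_S(Y,Y)$ splits, so $S$ is a direct summand of $\Hom_S(Y,Y)\in\X$. Closure under direct summands then gives $S\in\X$. In other words, the trace/duality idea you gestured at is indeed what makes it work, but it must be applied to $\Hom_S(Y,Y)$ rather than to an inverse fractional ideal, and the specific choice $Y=\Hom_R(S,X)$ is what lets you see $\Hom_S(Y,Y)$ inside $\X$.
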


\begin{proof}
First of all, since $R$ is excellent, the $R$-algebra $S$ is module-finite.
Take a nonzero module $X\in \X$.
Set $Y= \Hom_R(S,X)$ and $Z= \Hom_S(Y,Y)$.
We then have $Z=\Hom_S(Y,\Hom_R(S,X))\cong\Hom_R(Y,X)$ by adjointness.
Remark \ref{2}(2) implies $Y,Z\in\X$.
Since $X$ is a torsionfree $R$-module by Remark \ref{3}(1), $Y$ is torsionfree as an $R$-module.
We directly verify that $Y$ is torsionfree as an $S$-module.
It follows from \cite[Proposition A.2 and Corollary A.5]{HL} that $S$ is a direct summand of $Z$.
Therefore, $S$ belongs to $\X$ since $\X$ is closed under direct summands.
It follows by Lemma \ref{remim}(3) that every torsionfree $S$-module is in $\X$.

Fix $N\in\mod S$.
A presentation $S^{\oplus m} \to S^{\oplus n} \to N \to 0$ induces an exact sequence $0 \to \Hom_R(N, R) \to \Hom_R(S, R)^{\oplus n} \to \Hom_R(S, R)^{\oplus m}$.
Since $\Hom_R(S, R)$ is isomorphic to an ideal of $S$, it belongs to $\X$ by Lemma \ref{remim}(3).
As $\X$ is closed under finite direct sums and kernels, we obtain $\Hom_R(N,R)\in\X$.
\end{proof}

Here is a direct application of the above proposition.

\begin{cor}\label{f}
Let $R$ be an excellent normal domain of characteristic zero.
Then there exists no IKE-closed subcategory $\X$ of $\mod R$ with $0\subsetneq\X\subsetneq\tf R$.
\end{cor}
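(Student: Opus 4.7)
The plan is to derive a contradiction from the existence of such an $\X$ by applying Proposition \ref{int} to show $\X \supseteq \tf R$, while the hypothesis $\X \subseteq \tf R$ gives the reverse inclusion.

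First I would show that the hypotheses of Proposition \ref{int} are satisfied. Since $\X$ is an IKE-closed subcategory of $\mod R$ with $0 \subsetneq \X$, there exists a nonzero module $X \in \X$. The assumption $\X \subseteq \tf R$ forces $X$ to be torsionfree, and because $R$ is a domain this means $\ass X = \{0\}$. Every other nonzero module in $\X$ is likewise torsionfree with associated primes equal to $\{0\}$, so $\ass \X = \{0\}$ as required.

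Next I would invoke the assumption that $R$ is normal to identify the integral closure $S$ appearing in Proposition \ref{int} with $R$ itself. Applied to our $\X$, the proposition asserts that every torsionfree $S$-module, i.e. every torsionfree $R$-module, belongs to $\X$. In other words, $\tf R \subseteq \X$, which combined with the standing hypothesis $\X \subseteq \tf R$ gives $\X = \tf R$. This directly contradicts the strict inclusion $\X \subsetneq \tf R$, completing the proof.

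There is no real obstacle here: once Proposition \ref{int} is in hand, the only thing to verify is that its hypothesis $\ass\X = \{0\}$ holds, which is immediate from $0 \subsetneq \X \subseteq \tf R$ over a domain. The normality of $R$ is used exclusively to guarantee $S = R$, so that the conclusion about torsionfree $S$-modules translates verbatim into a statement about $\tf R$.
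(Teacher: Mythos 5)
Your proof is correct and follows essentially the same route as the paper's: both verify $\ass\X=\{0\}$ from the inclusion $0\subsetneq\X\subseteq\tf R$ (the paper cites Remark \ref{3}(1), you argue it directly from torsionfreeness over a domain), then invoke Proposition \ref{int} with $S=R$ via normality to obtain $\tf R\subseteq\X$, forcing $\X=\tf R$.
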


\begin{proof}
Let $\X$ be an IKE-closed subcategory of $\mod R$ with $0\ne\X\subseteq\tf R$.
Then, since $\tf R=\ass^{-1}\{0\}$ by Remark \ref{3}(1), we see that $\ass\X=\{0\}$.
Letting $R=S$ in Proposition \ref{int} yields that the inclusion $\X\supseteq\tf R$ holds.
We obtain the equality $\X=\tf R$, and the assertion of the corollary now follows.
\end{proof}

The $2$-dimensional version of Corollary \ref{d} holds under some mild assumptions.

\begin{cor}
Let $(R,\m,k)$ be a $2$-dimensional excellent normal local domain of characteristic $0$.
Let $\X$ be an IKE-closed subcategory of $\mod R$ with $\m\in\ass\X$.
Then $\X=\ass^{-1}(\ass\X)$ and $\X$ is torsionfree.
\end{cor}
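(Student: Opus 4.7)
The plan is to apply Proposition \ref{aslem2}: once I verify $R/\p \in \X$ for every $\p \in \ass\X$, I obtain $\ass^{-1}(\ass\X) \subseteq \X$, and since the reverse inclusion is automatic from the definition of $\ass\X$, the equality $\X = \ass^{-1}(\ass\X)$ will exhibit $\X$ as a torsionfree class by the correspondence recalled just before Question \ref{qIKE}. Since $\dim R = 2$, each $\p \in \ass\X$ has height $0$, $1$, or $2$, and I handle the three possibilities separately.

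First, $\X$ is IKE-closed, hence closed under direct summands and---via the split extensions $0 \to X \to X \oplus Y \to Y \to 0$ together with extension-closure---under finite direct sums. By Remark \ref{2}(2), this makes $\X$ Hom-ideal, so Lemma \ref{max}(1) applied to the hypothesis $\m \in \ass\X$ gives $R/\m \in \X$. Next let $\p \in \ass\X$ be a height-one prime. Mimicking the argument in the proof of Corollary \ref{d}, an injection $R/\p \hookrightarrow X$ with $X \in \X$ yields $H = \Hom_R(R/\p, X) \in \X$; since $R/\p$ is a one-dimensional local domain, there is an exact sequence $0 \to (R/\p)^{\oplus r} \to H \to C \to 0$ with $r \geq 1$ and $C$ torsion over $R/\p$. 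Such a $C$ has finite length over $R$, is therefore an iterated extension of copies of $R/\m$, and hence lies in $\X$; kernel-closure then gives $(R/\p)^{\oplus r} \in \X$, and direct-summand-closure gives $R/\p \in \X$.

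It remains to treat $\p = 0$, assuming $0 \in \ass\X$, and to show $R \in \X$. Pick $X \in \X$ with $0 \in \ass X$ and apply Lemma \ref{aslem1} to the partition $\ass X = \{0\} \sqcup (\ass X \setminus \{0\})$ to obtain an exact sequence $0 \to M \to X \to N \to 0$ with $\ass M = \{0\}$ (so $M$ is torsionfree and nonzero) and $\ass N \subseteq \ass\X$ consisting only of primes of positive height. By the preceding two cases together with Proposition \ref{aslem2}, we have $N \in \X$, and then kernel-closure gives $M \in \X$. Consequently $\X \cap \tf R$ is a nonzero IKE-closed subcategory of $\mod R$ contained in $\tf R$, and Corollary \ref{f} forces $\X \cap \tf R = \tf R$; in particular $R \in \X$. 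The main obstacle I expect is exactly this last step: the height-zero case is precisely where the hypotheses on $R$---excellence, normality, and characteristic zero---must enter, and they do so only through Corollary \ref{f}, after the non-torsionfree part of a witness $X \in \X$ has been peeled off by Lemma \ref{aslem1}.
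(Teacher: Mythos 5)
Your proposal is correct and follows essentially the same three-part decomposition by $\height\p$ as the paper. The only cosmetic differences are that for $\height\p=1$ you inline the argument of Corollary \ref{d} directly over $R$ rather than passing to the factor ring $R/\p$ and citing that corollary, and for $\height\p=0$ you invoke Corollary \ref{f} where the paper cites Proposition \ref{int} (of which Corollary \ref{f} is the $R=S$ specialization); both variants carry the same content.
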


\begin{proof}
According to Proposition \ref{aslem2}, it is enough to prove that $R/\p$ belongs to $\X$ for every $\p\in\ass\X$.
By Remark \ref{2}(2) and Lemma \ref{max}(1) we have that $k\in\X$.
Hence we are done when $\height\p=2$.

Let us consider the case where $\height\p=1$.
Put $\overline\X=\X\cap\mod R/\p$.
Then $\overline\X$ is an IKE-closed subcategory of $\mod R/\p$ with $0\in\ass_{R/\p}\overline\X$.
Note that $\dim R/\p=1$, and that $\m/\p\in\ass_{R/\p}\overline\X$ as $k\in\X$.
It follows from Corollary \ref{d} that $R/\p\in\ass_{R/\p}^{-1}(\ass_{R/\p}\overline\X)=\overline\X$.
Therefore, the module $R/\p$ belongs to $\X$.

Now consider $\height\p=0$.
Then $0=\p\in\ass\X$.
The argument so far proves $R/\q\in\X$ for all $0\ne\q\in\ass\X$.
The latter part of the proof of Proposition \ref{21}(2) shows that $\X\cap\tf R$ is an IKE-closed subcategory of $\mod R$ with $\ass(\X\cap\tf R)=\{0\}$.
Proposition \ref{int} concludes $\X\cap\tf R=\tf R$, which implies $R\in\X$.
\end{proof}

Finally, we study the case where $R$ has dimension one from another point of view than so far.
Let us begin with determining all the torsionfree subcategories of $\mod R$ in the remark below.

\begin{rem}\label{13}
Let $(R,\m)$ be a local domain of dimension one.
Then the torsionfree subcategories of $\mod R$ are $0,\,\fl R,\,\tf R,\,\mod R$.
Indeed, thanks to \cite[Theorem 4.1]{wide}, each of the torsionfree subcategories has the form $\ass^{-1}\Phi$, where $\Phi$ is a subset of $\spec R=\{0,\m\}$.
Note that $\Phi$ is one of the sets $\emptyset,\,\{0\},\,\{\m\},\,\spec R$.
We have $\ass^{-1}\emptyset=0$, $\ass^{-1}\{0\}=\tf R$, $\ass^{-1}\{\m\}=\fl R$ and $\ass^{-1}(\spec R)=\mod R$; see Remark \ref{3}.
\end{rem}

Let $R$ be a reduced ring with total quotient ring $Q$ and integral closure $S$.
The set $R:_QS$ is called the {\em conductor} of $R$.
In what follows, we freely use knowledge of conductors stated in \cite[Chapter 12]{SH}.

We introduce the following notation: for an $R$-module $M$, we denote by $\ext_RM$ the {\em extension closure} of $M$, which is defined to be the smallest subcategory of $\mod R$ that contains $M$ and is closed under direct summands and extensions (note here that we require closedness under direct summands).

Now we prove the following result, which gives a sufficient condition for Question \ref{qIKE} to be affirmative.
This proposition plays an important role in the proofs of the main results of Section \ref{s6}.

\begin{prop}\label{12}
Let $R$ be a one-dimensional excellent henselian local domain.
Let $S$ be the integral closure of $R$.
Let $\c$ be the conductor of $R$.
Then the following statements hold.
\begin{enumerate}[\rm(1)]
\item
Let $\X$ be a Hom-ideal subcategory of $\mod R$ closed under direct summands.
If $\X$ contains a nonzero torsionfree $R$-module $X$, then $\X$ contains the $R$-module $S$.
\item
Suppose that $\ext_R\c=\tf R$ holds.
Then the IKE-closed subcategories of $\mod R$ are $0,\,\fl R,\,\tf R,\,\mod R$.
In particular, Question \ref{qIKE} has a positive answer for $R$.
\end{enumerate}
\end{prop}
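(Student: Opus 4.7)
My plan for part (1) is to mimic Proposition \ref{int}, exploiting the henselian hypothesis to shorten the argument. Since $R$ is a one-dimensional excellent henselian local domain, its integral closure $S$ is a module-finite $R$-algebra that is a local normal one-dimensional domain, that is, a discrete valuation ring. I would set $Y=\Hom_R(S,X)$, which lies in $\X$ by the $\Hom$-ideal hypothesis and carries a natural $S$-module structure. The key step is to verify that $Y$ is nonzero and $S$-torsionfree: for nonzero $x\in X$ and nonzero $c\in\c$, the assignment $s\mapsto csx$ defines a nonzero element of $Y$ (since $cs\in\c\subseteq R$), while if $sf=0$ for some $0\ne s\in S$ and $f\in Y$, then $f$ vanishes on the nonzero ideal $sS$ and factors through the torsion module $S/sS$, forcing $f=0$ as $X$ is $R$-torsionfree. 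Because $S$ is a DVR, any finitely generated $S$-torsionfree module is free, hence $Y\cong S^{\oplus n}$ for some $n\ge 1$ as $S$-modules and a fortiori as $R$-modules, so closedness of $\X$ under direct summands yields $S\in\X$.

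For part (2), my plan is a case analysis on $\ass\X\subseteq\spec R=\{0,\m\}$, observing first that an IKE-closed subcategory is automatically closed under finite direct sums (via split extensions) and direct summands, and hence $\Hom$-ideal by Remark \ref{2}(2). The case $\ass\X=\emptyset$ is trivial ($\X=0$). When $\ass\X=\{\m\}$, Lemma \ref{max}(1) gives $R/\m\in\X$, and Proposition \ref{aslem2} together with Remark \ref{3}(2) yields $\fl R=\ass^{-1}\{\m\}\subseteq\X\subseteq\fl R$. When $\ass\X=\{0\}$, part (1) gives $S\in\X$; Lemma \ref{remim}(1) then produces $\c=\c\cdot S\in\X$ (since the conductor is an $S$-ideal), and the hypothesis $\ext_R\c=\tf R$ combined with closure under direct summands and extensions forces $\tf R\subseteq\X\subseteq\tf R$.

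The case $\ass\X=\{0,\m\}$ needs a little more work: I must produce a nonzero torsionfree module in $\X$ and then combine torsionfree and finite-length modules to recover arbitrary modules. For the former, I would pick $X\in\X$ with $0\in\ass X$ and apply Lemma \ref{aslem1} to obtain an exact sequence $0\to L\to X\to N\to 0$ with $\ass L=\{0\}$ and $\ass N\subseteq\{\m\}$; the $\{\m\}$-case argument yields $R/\m\in\X$, hence $N\in\fl R\subseteq\X$, whereupon closedness of $\X$ under kernels puts $L\in\X$, a nonzero torsionfree member of $\X$. Applying part (1) to $L$ gives $\tf R\subseteq\X$, and combined with $\fl R\subseteq\X$, any $M\in\mod R$ fits into $0\to T\to M\to M/T\to 0$ where $T$ is the $\m$-torsion submodule (of finite length since $\dim R=1$) and $M/T$ is torsionfree; closure under extensions then gives $M\in\X$, so $\X=\mod R$. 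Comparing the list $0,\,\fl R,\,\tf R,\,\mod R$ with Remark \ref{13} shows that the IKE-closed and torsionfree subcategories of $\mod R$ coincide, answering Question \ref{qIKE} affirmatively for $R$. The main obstacle I anticipate is the verification in part (1) that $Y$ is nonzero and $S$-torsionfree: this is the heart of the argument and rests on the existence of a nonzero element of the conductor together with the $R$-torsionfreeness of $X$; everything else amounts to bookkeeping with closure properties and the classification in Remark \ref{13}.
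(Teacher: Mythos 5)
Your proof is correct and follows essentially the same strategy as the paper. Part (1) hinges on the same module $Y=\Hom_R(S,X)$ being $S$-free of positive rank; you verify nonzeroness and $S$-torsionfreeness by hand (via a nonzero conductor element and a torsion quotient argument), whereas the paper cites \cite[Exercise 1.2.27]{BH} for nonvanishing and then observes that $Y$ is maximal Cohen--Macaulay over the DVR $S$, hence $S$-free. Both routes are sound; yours is a bit more self-contained. In part (2), the paper handles the whole case $\m\in\ass\X$ in one stroke by invoking Corollary~\ref{d} (which already gives $\X=\ass^{-1}(\ass\X)$, covering both $\ass\X=\{\m\}$ and $\ass\X=\{0,\m\}$) and then reads off the answer from Remark~\ref{13}; you instead re-derive these two cases from scratch using Lemma~\ref{max}(1), Proposition~\ref{aslem2}, Lemma~\ref{aslem1}, and the torsion/torsionfree decomposition. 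This is more work than necessary but entirely correct. The remaining case $\ass\X=\{0\}$ is handled the same way in both: get $S\in\X$ from part (1), then $\c\in\X$ as an $S$-ideal, then $\tf R=\ext_R\c\subseteq\X\subseteq\tf R$.
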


\begin{proof}
Since $R$ is an excellent henselian local ring of dimension one, $S$ is a discrete valuation ring.

(1) Put $M=\Hom_R(S,X)$.
It is observed from \cite[Exercise 1.2.27]{BH} that $M$ is nonzero.
The Hom-ideal property of $\X$ shows that $M$ belongs to $\X$.
Since the $R$-module $X$ is maximal Cohen--Macaulay, so is the $S$-module $M$.
Since $S$ is regular, $M$ is $S$-free.
As $\X$ is closed under direct summands, we get $S\in\X$.

(2) Let $\X$ be an IKE-closed subcategory of $\mod R$.
Note that $\spec R=\{0,\m\}$.
If $\ass\X=\emptyset$, then $\X=0$.
If $\m\in\ass\X$, then Corollary \ref{d} implies that $\X$ is torsionfree, and $\X\in\{0,\,\fl R,\,\tf R,\,\mod R\}$ by Remark \ref{13}.
We may assume $\ass\X=\{0\}$.
Then $\X\subseteq\tf R$ by Remark \ref{3}(1).
We have $S\in\X$ by (1), and $\c\in\X$ by Lemma \ref{remim}(3) as $\c$ is an ideal of $S$.
Hence $\X\supseteq\ext_R\c=\tf R$.
We get $\X=\tf R$, which completes the proof of the first assertion.
The second assertion follows from the first and Remark \ref{13}.
\end{proof}

%%%%%%%%%%%%%%%%%%%%%%%%%%%%%%%%%%%%%%
\section{Closedness under direct summands and extensions}\label{s5}

In this section, we consider what modules are built by taking only direct summands and extensions.
Results given in this section are used in our further investigation of Question \ref{qIKE} on IKE-closed subcategories developed in the next section.
We first make elementary observations on extension closures.

\begin{rem}\label{7}
\begin{enumerate}[(1)]
\item
Let $R$ be a local ring with residue field $k$.
Then $\ext_Rk=\fl R$.
Indeed, the inclusion $(\subseteq)$ is obvious.
To show $(\supseteq)$, we take a composition series of each module of finite length.
\item
Let $I$ be an ideal of $R$.
Let $M$ and $N$ be $R/I$-modules.
If $N\in\ext_{R/I}M$, then $N\in\ext_RM$.
This is a consequence of the fact that a short exact sequence of $R/I$-modules is a short exact sequence of $R$-modules, and a direct summand of an $R/I$-module $L$ is a direct summand of the $R$-module $L$.
\end{enumerate}
\end{rem}

We denote by $\tf_0R$ the subcategory of $\mod R$ consisting of torsionfree $R$-modules which are generically free (i.e., locally free on $\Min R$).
When $R$ is a Cohen--Macaulay local ring of dimension one, $\tf_0R$ coincides with the subcategory $\cm_0(R)$ consisting of maximal Cohen--Macaulay $R$-modules that are locally free on the punctured spectrum of $R$.
Below, we establish three lemmas to show the main result of this section.
The first one concerns the extension closures of ideals of a local ring.

\begin{lem}\label{14}
Let $R$ be a local ring with maximal ideal $\m$ and residue field $k$.
\begin{enumerate}[\rm(1)]
\item
Let $I,J$ be ideals of $R$.
If $R/I\in\ext_RR/J$, then $I\in\ext_RJ$.
\item
Suppose that $R$ is a Cohen--Macaulay ring of dimension one.
Let $I$ be an ideal of $R$ which is generically free as an $R$-module.
If $k$ belongs to $\ext_RR/I$, then the equality $\ext_RI=\tf_0R$ holds.
\end{enumerate}
\end{lem}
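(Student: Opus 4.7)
My plan is to prove (1) via a syzygy argument, and to derive (2) from (1) together with Schanuel's lemma.

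For (1), the idea is to pass to first syzygies of minimal free presentations. Writing $\syz M = \ker(R^{\mu(M)} \twoheadrightarrow M)$ so that $\syz(R/J) = J$, I would consider the class
\[
\Y = \{\, M \in \mod R : \syz M \in \ext_R J \,\}
\]
and show $\ext_R R/J \subseteq \Y$. The containment $R/J \in \Y$ is tautological; closure of $\Y$ under direct summands follows from the additivity $\syz(A \oplus B) \cong \syz A \oplus \syz B$ for minimal presentations together with closure of $\ext_R J$ under summands. For closure under extensions, the Horseshoe Lemma applied to $0 \to A \to B \to C \to 0$, combined with Schanuel's lemma and Krull--Schmidt over the local ring $R$, will produce an exact sequence
\[
0 \to \syz A \to \syz B \oplus R^{s} \to \syz C \to 0
\]
with $s = \mu(A) + \mu(C) - \mu(B) \ge 0$. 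If $\syz A, \syz C \in \ext_R J$, the middle term is then an extension of members of $\ext_R J$, and $\syz B$ lies in $\ext_R J$ as a direct summand. Specializing $\ext_R R/J \subseteq \Y$ to $M = R/I$ gives $I = \syz(R/I) \in \ext_R J$.

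For (2), the inclusion $\ext_R I \subseteq \tf_0 R$ is routine: $I \in \tf_0 R$ by hypothesis, and $\tf_0 R$ is closed under direct summands and extensions (for extensions, use the depth lemma together with splitting after localizing at the minimal primes of $R$). For the reverse inclusion I would argue in three steps. First, Remark \ref{7}(1) gives $\ext_R k = \fl R$; combined with $k \in \ext_R R/I$, this forces $\fl R \subseteq \ext_R R/I$, and part (1) then yields $J \in \ext_R I$ for every proper ideal $J$ of $R$ with $R/J$ of finite length. Second, since $\depth R \ge 1$ I pick a nonzerodivisor $a \in \m$: the ideal $aR$ is proper with $R/aR$ of finite length (since $a$ is a system of parameters of the one-dimensional ring $R$), so $aR \in \ext_R I$, and multiplication by $a$ gives an $R$-module isomorphism $R \cong aR$, whence $R \in \ext_R I$. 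Third, for $N \in \tf_0 R$ of generic rank $r$, I embed $N \hookrightarrow R^r$ with finite length cokernel $C$ by clearing denominators in $N \hookrightarrow N \otimes Q \cong Q^r$, and compare the resulting sequence with the minimal presentation $0 \to \syz C \to R^{\mu(C)} \to C \to 0$ via Schanuel's lemma to get
\[
N \oplus R^{\mu(C)} \cong \syz C \oplus R^r.
\]
Here $\syz C \in \ext_R I$ by part (1) (since $C \in \fl R \subseteq \ext_R R/I$), and $R \in \ext_R I$ by the previous step, so the right-hand side lies in $\ext_R I$; then $N$ does too, as a direct summand.

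The main subtlety will be the embedding $N \hookrightarrow R^r$ with finite length cokernel in the third step: this is completely transparent when $R$ is a domain (where $Q$ is a field), while in the general one-dimensional Cohen--Macaulay local setting one has to work with $Q$ as a finite product of Artinian local rings at the minimal primes of $R$ and adjust the ranks accordingly. The careful minimal-syzygy bookkeeping in (1) is essential precisely so that the free summand $R^s$ produced by the non-minimal Horseshoe presentation of $B$ can be absorbed by closure under direct summands.
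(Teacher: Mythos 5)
Your proof of (1) is correct and is essentially the paper's proof, with the closure properties of the auxiliary class $\{X : \syz X \in \ext_R J\}$ (which the paper only asserts) spelled out via Horseshoe plus Schanuel. Your inclusion $\ext_R I \subseteq \tf_0 R$ in (2) is also fine. For the reverse inclusion your route genuinely diverges from the paper's: the paper derives $\m \in \ext_R I$ from (1) and then cites \cite[Theorem 2.4]{stcm} for the equality $\ext_R\m = \tf_0 R$, whereas you attempt to reprove that equality from scratch by first establishing $R \in \ext_R I$ (a nice observation) and then running a Schanuel comparison on an embedding of $N$ into a free module.

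The gap is exactly where you flag it, and I do not think it can be waved away by ``adjusting ranks''. An embedding $N \hookrightarrow R^{r}$ with finite-length cokernel exists if and only if $N \otimes_R Q$ is a free $Q$-module, that is, $N$ has constant rank on $\Min R$; this is automatic when $R$ is a domain but fails in general. Concretely, take $R = k[[x,y]]/(xy)$, a one-dimensional Cohen--Macaulay local ring with $\gr_\m R \cong k[X,Y]/(XY)$ of positive depth, so squarely in the setting of Theorem \ref{8} where Lemma \ref{14}(2) is applied. Here $N = R/(x) \cong yR$ lies in $\tf_0 R$ with rank vector $(1,0)$ over $\Min R = \{(x),(y)\}$, and $N\otimes_R Q \cong k((y)) \times 0$ is not $Q$-free, so no embedding of $N$ into a free module with finite-length cokernel exists and your third step never starts. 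Replacing $R^{r}$ by a non-free torsionfree target $F$ breaks the Schanuel comparison (Schanuel needs both middle terms projective), and padding $N$ by a complementary-rank module already known to lie in $\ext_R I$ is circular. As written your argument establishes (2) only when every module in $\tf_0 R$ has constant rank on $\Min R$ (in particular when $R$ is a domain); for the general statement you still need \cite[Theorem 2.4]{stcm}, or a genuinely different handling of non-constant-rank torsionfree modules.
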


\begin{proof}
(1) For each $R$-module $M$, denote by $\syz M$ the first syzygy of $M$ in the minimal free resolution of $M$; hence $\syz M$ is uniquely determined up to isomorphism.
Let $\X$ be the subcategory of $\mod R$ consisting of modules $X$ with $\syz X\in\ext_RJ$.
Then it is seen that $\X$ is closed under direct summands and extensions, and contains $R/J$.
Hence $\X$ contains $\ext_RR/J$, which contains $R/I$.
It follows that $I=\syz(R/I)\in\ext_RJ$.

(2) The subcategory $\tf_0R$ is closed under direct summands and extensions, and contains $I$.
Hence $\tf_0R$ contains $\ext_RI$.
Conversely, since $k=R/\m$ belongs to $\ext_RR/I$, it is seen by (1) that $\m$ belongs to $\ext_RI$.
As $R$ is a $1$-dimensional Cohen--Macaulay local ring, there is an equality $\ext_R\m=\tf_0R$ by \cite[Theorem 2.4]{stcm}.
Therefore, $\tf_0R$ is contained in $\ext_RI$.
We conclude that $\tf_0R=\ext_RI$.
\end{proof}

The following lemma may be well-known to experts.
Perhaps it is more usual to show the lemma by using the notion of Ratliff--Rush closures, but we give here an elementary direct proof.

\begin{lem}\label{6}
Let $R$ be a local ring with infinite residue field $k$.
Let $I$ be a proper ideal of $R$.
If the associated graded ring $\gr_I R$ has positive depth, then $I^i:I^j=I^{i-j}$ holds for all integers $i\ge j\ge0$.
\end{lem}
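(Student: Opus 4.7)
The plan is to reduce the statement to a colon identity involving a single element. Since $k$ is infinite and $\gr_I R$ has positive depth, I would first produce an element $x\in I$ whose initial form $x^{\ast}=x+I^{2}\in(\gr_I R)_1$ is a non-zerodivisor on $\gr_I R$. The associated primes of $\gr_I R$ are homogeneous, and by the positive depth hypothesis none of them contains $\bigoplus_{n\ge1}(\gr_I R)_n$; a standard homogeneous prime avoidance argument, combined with $k$ being infinite (so that $I/\m I$ cannot be covered by finitely many proper $k$-subspaces), then yields such an $x$.

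The heart of the proof is to establish the single-element identity $I^{i}:x=I^{i-1}$ for every $i\ge1$. The inclusion $\supseteq$ is clear, so suppose $ax\in I^{i}$. Because $I$ is proper and $R$ is local, Krull's intersection theorem gives $\bigcap_{n}I^{n}=0$, so there is a largest $n\ge0$ with $a\in I^{n}$. If $n<i-1$, then $a+I^{n+1}$ is a nonzero element of $I^{n}/I^{n+1}\subseteq\gr_I R$, while $ax\in I^{i}\subseteq I^{n+2}$ means $x^{\ast}\cdot(a+I^{n+1})=0$ in $\gr_I R$; this contradicts the regularity of $x^{\ast}$. Hence $a\in I^{i-1}$, as desired.

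Iterating the identity gives $I^{i}:x^{j}=(I^{i}:x):x^{j-1}=I^{i-1}:x^{j-1}=\cdots=I^{i-j}$ for all $i\ge j\ge1$ (with the trivial $j=0$ case handled separately). Since $x^{j}\in I^{j}$, the chain
\[
I^{i-j}\subseteq I^{i}:I^{j}\subseteq I^{i}:x^{j}=I^{i-j}
\]
forces equality, completing the proof. The step I expect to be the main obstacle is the first one, locating a degree-one non-zerodivisor in $\gr_I R$: this is where both hypotheses (infinite residue field and $\depth\gr_I R>0$) are genuinely used, and it must be carried out carefully via homogeneous prime avoidance applied to the associated primes of the graded ring. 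Once that element is in hand, the remainder of the argument is a clean inductive reduction of the ideal colon to a principal colon.
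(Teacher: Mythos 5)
Your proof is correct and rests on the same key idea as the paper's: select $x\in I$ whose initial form $\overline{x}\in I/I^{2}$ is a non-zerodivisor on $\gr_{I}R$ (using infinite $k$ and positive depth), then reduce the colon with $I^{j}$ to the principal colon $I^{i}:x^{j}$ and sandwich. The bookkeeping differs slightly: the paper runs an induction on $i-j$ and invokes only the injectivity of the single graded multiplication map $I^{i-j-1}/I^{i-j}\xrightarrow{\;x^{j}\;}I^{i-1}/I^{i}$, whereas you prove the stronger identity $I^{i}:x=I^{i-1}$ outright via Krull's intersection theorem and iterate it, which makes the final sandwich immediate and eliminates the induction. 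Both arguments are sound; yours is perhaps a touch more self-contained, the paper's a touch leaner in what it extracts from regularity of $\overline{x}$.
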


\begin{proof}
We show $I^i:I^j\subseteq I^{i-j}$ by induction on $i-j$.
It is clear if $i-j=0$.
Let $i-j>0$.
The induction hypothesis implies $I^i:I^j\subseteq I^{i-1}:I^j\subseteq I^{i-j-1}$
As $k$ is infinite, we can choose a ($\gr_IR$)-regular element $\overline x\in(\gr_IR)_1=I/I^2$ with $x\in I$; see \cite[Proposition 1.5.12]{BH}.
The injectivity of the multiplication map $I^{i-j-1}/I^{i-j}\xrightarrow{x^j}I^{i-1}/I^i$ deduces $I^{i-j}=(I^i:x^j)\cap I^{i-j-1}$.
As $I^i:I^j\subseteq I^i:x^j$, we get $I^i:I^j\subseteq I^{i-j}$.
\end{proof}

The lemma below is not advanced but plays a crucial role in the proof of our next theorem.
Indeed, it is essential in exploring extension closures to find a short exact sequence as in the proof of the lemma.

\begin{lem}\label{4}
Let $x$ be an element of $R$, and let $I$ be an ideal of $R$.
If $0:_Rx\subseteq xI$, then $R/I\in\ext_RR/xI$.
\end{lem}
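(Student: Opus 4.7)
The plan is to exhibit an explicit short exact sequence
\[
0 \to R/xI \xrightarrow{\iota} R/x^2I \oplus R/I \xrightarrow{\pi} R/xI \to 0
\]
whose middle term visibly has $R/I$ as a direct summand. Granted such a sequence, the middle term is an extension of $R/xI$ by itself and therefore lies in $\ext_R R/xI$; since $\ext_R R/xI$ is closed under direct summands by definition, $R/I \in \ext_R R/xI$ will follow at once.

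I would define the maps by $\iota(\overline r) = (\overline{xr}, \overline r)$ and $\pi(\overline a, \overline b) = \overline{a - xb}$. Both are well-defined from the trivial containments $x\cdot xI \subseteq x^2 I$ and $xI \subseteq I$, and $\pi\circ\iota = 0$ is immediate. Surjectivity of $\pi$ is clear, since any $\overline c \in R/xI$ satisfies $\pi(\overline c, \overline 0) = \overline c$ via the natural surjection $R/x^2 I \to R/xI$. For exactness at the middle, given $(\overline a, \overline b)\in\ker\pi$, one writes $a - xb = xj$ with $j\in I$ and sets $r = b + j$; then $xr = a$ in $R$ and $r\equiv b\pmod I$, so $\iota(\overline r) = (\overline a, \overline b)$.

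The crucial step, and the only one that actually invokes the hypothesis $0:_R x\subseteq xI$, is injectivity of $\iota$. Suppose $\iota(\overline r) = 0$: then $xr\in x^2 I$ and $r\in I$. Writing $xr = x^2 i$ with $i\in I$ gives $x(r - xi) = 0$, so $r - xi\in 0:_R x$; the hypothesis then forces $r - xi\in xI$, and hence $r\in xI$, i.e.\ $\overline r = 0$. The only real obstacle in the argument is spotting the right middle term: the extra summand $R/x^2 I$ is precisely what lets $\iota$ be both well-defined and injective via the hypothesis, while the desired summand $R/I$ is what makes the conclusion read off immediately by closedness under direct summands.
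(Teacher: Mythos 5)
Your proof is correct and is essentially the paper's own argument: it constructs the same short exact sequence (with the two middle summands merely in the opposite order) with the same maps, verifies exactness in the same way, and invokes the hypothesis $0:_R x \subseteq xI$ at the same spot, namely injectivity of the first map.
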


\begin{proof}
It suffices to show that $0\to R/xI\xrightarrow{f}R/I\oplus R/x^2I\xrightarrow{g}R/xI\to0$ is an exact sequence, where $f(\overline a)=\binom{\overline a}{\overline{xa}}$ and $g(\binom{\overline a}{\overline b})=\overline{b-xa}$ for $a,b\in R$.
It is easy to see that $f,g$ are well-defined homomorphisms, $g$ is surjective and $gf=0$.
If $b-xa\in xI$, then $b-xa=xc$ for some $c\in I$ and $\binom{\overline a}{\overline b}=\binom{\overline{a+c}}{\overline{x(a+c)}}$.
Hence the equality $\image f=\kernel g$ holds.
Suppose $xa\in x^2I$.
Then $xa=x^2d$ for some $d\in I$ and $a-xd\in0:_Rx$.
The assumption $0:_Rx\subseteq xI$ implies $a-xd\in xI$, and $a\in xI$.
This shows that $f$ is injective.
\end{proof}

Now we can prove the main result of this section, which is the following theorem.
This is not only used in the proof of one of the main results of the next section on IKE-closed subcategories, but also of independent interest as a result purely about subcategories closed under direct summands and extensions.

\begin{thm}\label{8}
Let $(R,\m,k)$ be a Cohen--Macaulay local ring of dimension one with $k$ infinite.
Suppose that $\gr_\m R$ has positive depth.
Then $\ext_RR/\m^i=\fl R$ and $\ext_R\m^i=\tf_0R$ for all positive integers $i$.
\end{thm}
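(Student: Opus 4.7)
The plan is to establish the first equality $\ext_R R/\m^i = \fl R$ by induction on $i$; the second equality $\ext_R \m^i = \tf_0 R$ will then follow formally from Lemma~\ref{14}(2), since $R$ being one-dimensional Cohen--Macaulay makes $\m^i$ generically free (each $\m^i R_\p = R_\p$ at a minimal prime $\p$), and $k = R/\m$ lies in $\fl R = \ext_R R/\m^i$ once the first equality is in hand.

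For the first equality, the inclusion $\ext_R R/\m^i \subseteq \fl R$ is trivial, and by Remark~\ref{7}(1) the other direction reduces to $R/\m \in \ext_R R/\m^i$. I induct on $i$, with $i = 1$ immediate. For the inductive step, I will establish the key claim $R/\m^{i-1} \in \ext_R R/\m^i$; combined with the induction hypothesis $R/\m \in \ext_R R/\m^{i-1}$, this yields $R/\m \in \ext_R R/\m^i$ by transitivity of extension closures.

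To prove the key claim, invoke that $k$ is infinite and $\gr_\m R$ has positive depth to pick $x \in \m$ whose image $\bar x \in \m/\m^2 = (\gr_\m R)_1$ is a nonzerodivisor on $\gr_\m R$; then $x$ is regular on $R$, and by Lemma~\ref{6}, $\m^n : x = \m^{n-1}$ for every $n \geq 1$, equivalently $x\m^{n-1} = xR \cap \m^n$. Apply Lemma~\ref{4} with $I = \m^{i-1}$ (the hypothesis $0 :_R x = 0 \subseteq x\m^{i-1}$ holds since $x$ is regular): this gives $R/\m^{i-1} \in \ext_R R/(x\m^{i-1})$. The short exact sequence
\[
0 \to \m^i/(x\m^{i-1}) \to R/(x\m^{i-1}) \to R/\m^i \to 0,
\]
together with the isomorphism $\m^i/(x\m^{i-1}) \cong (\m^i + xR)/xR$ embedding the kernel in the artinian ring $R/xR$ (so that it has finite length), reduces the key claim to showing $\m^i/(x\m^{i-1}) \in \ext_R R/\m^i$.

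The hardest step is this last containment. The inductive hypothesis places the finite-length module $\m^i/(x\m^{i-1})$ in $\fl R = \ext_R R/\m^{i-1}$, and Lemma~\ref{4} yields $\ext_R R/\m^{i-1} \subseteq \ext_R R/(x\m^{i-1})$, so the module lies in $\ext_R R/(x\m^{i-1})$. Transferring this containment into $\ext_R R/\m^i$ is the delicate point; I expect to accomplish it by exploiting the artinian structure of $R/xR$---whose maximal ideal has nilpotency index equal to $r+1$, where $r$ is the reduction number of $x$ (finite because $\gr_\m R$ is Cohen--Macaulay)---together with iterated applications of Lemma~\ref{4} to auxiliary ideals of the form $x^n\m^j$, thereby realizing $\m^i/(x\m^{i-1})$ as a direct summand of an appropriate extension built entirely from copies of $R/\m^i$.
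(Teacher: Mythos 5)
There is a genuine gap at the step you yourself flag as hardest, and I do not think your sketched plan for closing it is going to work without the central idea of the paper's proof.

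Your application of Lemma~\ref{4} in $R$ with $I=\m^{i-1}$ does give $R/\m^{i-1}\in\ext_R R/(x\m^{i-1})$, and the short exact sequence $0\to\m^i/(x\m^{i-1})\to R/(x\m^{i-1})\to R/\m^i\to0$ does reduce the key claim to $\m^i/(x\m^{i-1})\in\ext_R R/\m^i$. But this last containment is not a routine step: the inductive hypothesis only tells you $\m^i/(x\m^{i-1})\in\fl R=\ext_R R/\m^{i-1}$, and $\ext_R R/\m^i$ is a priori a (possibly strictly) smaller subcategory of $\fl R$ --- indeed $\ext_R R/\m^{i-1}\subseteq\ext_R R/\m^i$ is exactly the statement under proof. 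So feeding the inductive hypothesis into the chain $\ext_R R/\m^{i-1}\subseteq\ext_R R/(x\m^{i-1})$ lands the module only in $\ext_R R/(x\m^{i-1})$, not in $\ext_R R/\m^i$, and transferring between those two extension closures is again the original difficulty (it would follow from the SES if you already knew $\m^i/(x\m^{i-1})\in\ext_R R/\m^i$, which is circular). Your closing suggestion --- iterating Lemma~\ref{4} over ideals $x^n\m^j$ and realizing $\m^i/(x\m^{i-1})$ as a summand of an extension of copies of $R/\m^i$ --- is not backed by any concrete construction, and I see no obvious way to produce such an extension, because the cyclic quotients appearing in Lemma~\ref{4} keep involving $x$-twisted ideals rather than powers of $\m$.

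The paper sidesteps this exact problem by never forming the quotient $R/(x\m^{i-1})$ in $R$ at all. Instead, it writes $\m^i=x\m^{i-1}+I$ for a carefully chosen ideal $I\subseteq\m^i$ with $\mu(I)$ minimal, passes to the factor ring $R/I$, and applies Lemma~\ref{4} there to the element $x$ and the ideal $\m^{i-1}/I$: since $x(\m^{i-1}/I)=\m^i/I$ in $R/I$, Lemma~\ref{4} directly produces $R/\m^{i-1}\in\ext_{R/I}R/\m^i$ (with no extra kernel term to dispose of), and Remark~\ref{7}(2) lifts this to $\ext_R$. The price is that the hypothesis $0:_{R/I}x\subseteq x(\m^{i-1}/I)$ of Lemma~\ref{4} is no longer trivial (unlike $0:_Rx=0$ in $R$); verifying it is the real work, using the reduction $\m^{n+1}=x\m^n$, the colon identities $\m^p:\m^q=\m^{p-q}$ from Lemma~\ref{6}, and crucially the minimality of $\mu(I)$ to rule out the bad cases. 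Without something playing the role of this choice of $I$, your reduction cannot be completed, so as it stands the argument is incomplete in a way that is not merely a matter of filling in routine details.
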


\begin{proof}
The second assertion is a consequence of the first and Lemma \ref{14}(2).
In what follows, we prove the first assertion.
As $k$ is infinite and $\dim R=1$, we can choose a system of parameters $x$ of $R$ such that $(x)$ is a reduction of $\m$; see \cite[Corollary 4.6.10]{BH}.
There is an integer $n>0$ such that $\m^{n+1}=x\m^n$.
Since $k$ is infinite and $\depth(\gr_\m R)>0$, we have $\m^p:\m^q=\m^{p-q}$ for all integers $p\ge q\ge0$ by Lemma \ref{6}.

Fix an integer $i$ with $1\le i\le n$.
The ideal $\m^i$ contains $x\m^{i-1}$, and there is an ideal $I\subseteq\m^i$ such that $\m^i=x\m^{i-1}+I$.
Choose such an ideal $I$ so that the minimal number of generators $\mu(I)$ of $I$ is minimum.

We claim that $0:_{R/I}x\subseteq x(\m^{i-1}/I)$.
In fact, assume that it is not true.
We can choose an element $y\in R$ with $\overline y\in0:_{R/I}x$ and $\overline y\notin x(\m^{i-1}/I)$.
Then $xy\in I$ and $y\notin x\m^{i-1}+I=\m^i$.
There are implications
$$
xy\in\m I
\ \Rightarrow\  xy\in\m\m^i=\m^{i+1}
\ \Rightarrow\  xy\m^{n-i}\subseteq\m^{n+1}=x\m^n
\ \Rightarrow\  y\m^{n-i}\subseteq\m^n
\ \Rightarrow\  y\in\m^n:\m^{n-i}=\m^i,
$$
where the third implication holds since $x$ is $R$-regular, and the last equality holds since $n\ge n-i\ge0$.
As $y\notin\m^i$, we get $xy\notin\m I$.
Hence $xy$ is part of a minimal system of generators of $I$.
If $xy\in x\m^{i-1}$, then there exists an ideal $I'$ with $\mu(I')<\mu(I)$ such that $\m^i=x\m^{i-1}+I'$, which contradicts the choice of $I$.
We have $xy\notin x\m^{i-1}$, and $y\notin\m^{i-1}$.
Similarly as above, we see that there are implications
$$
xy\in I
\ \Rightarrow\  xy\in\m^i
\ \Rightarrow\  xy\m^{n-i+1}\subseteq\m^{n+1}=x\m^n
\ \Rightarrow\  y\m^{n-i+1}\subseteq\m^n
\ \Rightarrow\  y\in\m^n:\m^{n-i+1}=\m^{i-1}.
$$
Recall that we have $xy\in I$ and $y\notin\m^{i-1}$.
The implications give a contradiction, and the claim follows.

The above claim enables us to apply Lemma \ref{4} to the ring $R/I$ and the ideal $\m^{i-1}/I$ to get $R/\m^{i-1}\in\ext_{R/I}R/\m^i$ (since $x\m^{i-1}+I=\m^i$).
By Remark \ref{7}(2), we get $\ext_RR/\m^{i-1}\subseteq\ext_RR/\m^i$ for all $1\le i\le n$.
Using Remark \ref{7}(1), we see that $\ext_RR/\m^i=\fl R$ for all $1\le i\le n$.
Recall that $n$ is a positive integer such that $\m^{n+1}=x\m^n$.
Multiplying this equality by $\m^{n'-n}$, we have $\m^{n'+1}=x\m^{n'}$ for all integers $n'\ge n$.
Replacing $n$ with $n'$ in the above argument, we observe that $\ext_RR/\m^i=\fl R$ for all integers $i>0$.
\end{proof}

%%%%%%%%%%%%%%%%%%%%%%%%%%%%%%%%%%%%%%%
\section{Numerical semigroup rings}\label{s6}

In this section, we focus on the case where $R$ is a numerical semigroup ring.
We apply results which we have obtained in the previous sections, especially Proposition \ref{12} and Theorem \ref{8}, and figure out certain cases where our Question \ref{qIKE} has a positive answer.

Recall that a {\em numerical semigroup} is by definition a subsemigroup $H$ of the additive semigroup $\N$ such that $0\in H$ and $\N\setminus H$ is finite.
In this section, we investigate IKE-closed subcategories of $\mod R$ in the case where $R$ is a (completed) numerical semigroup ring to consider our Question \ref{qIKE}.

In what follows, we freely use knowledge of numerical semigroups stated in \cite[Page 178]{BH}.
We begin with introducing a numerical semigroup defined by consecutive integers and computing its conductor.

\begin{dfn}
For positive integers $a$ and $r$, we put
$$
H_{a,r}= \langle a, a+1, \dots, a+r \rangle
=\{c_0a+c_1(a+1)+\cdots+c_r(a+r)\mid c_0,c_1,\dots,c_r\in\N\}.
$$
As $\gcd(a,a+1)=1$, the set $\N\setminus\langle a,a+1\rangle$ is finite, and so is $\N\setminus H_{a,r}$.
Thus $H_{a,r}$ is a numerical semigroup.
\end{dfn}

\begin{lem} \label{numlem}
Let $a,r$ be positive integers.
Let $c$ be the conductor of the numerical semigroup $H_{a,r}$.
\begin{enumerate}[\rm(1)]
\item	
For any integers $n \ge 1$ and $0 \le j \le nr$, one has $na + j \in H_{a,r}$.
\item
If $a \le ur + 1$ for an integer $u$, then $ua \ge c$ (i.e., $b \in H_{a,r}$ for any integer $b \ge ua$).
\item
If $a > u r + 1$, then $ua+j \not\in H_{a,r}$ for any integer $ur < j \le a-1$.
In particular, $c > ua + (a-1)$.
\end{enumerate}	
\end{lem}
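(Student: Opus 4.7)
The three parts are really statements about when integers of the form $na+j$ can (or cannot) be written as nonnegative integer combinations of $a,a+1,\dots,a+r$, so the plan is to handle everything by separating the coefficient of $a$ from the ``residue'' $j$ and then controlling ranges. The key identity to keep in mind is that any element of $H_{a,r}$ is of the form $\sum_{i=0}^r c_i(a+i)=na+m$ where $n=\sum c_i\ge 0$ and $m=\sum ic_i$ satisfies $0\le m\le nr$; conversely, any such $(n,m)$ is achievable. So membership in $H_{a,r}$ of an integer $b$ amounts to: there exists $n\ge 0$ with $b-na\ge 0$ and $b-na\le nr$.

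For part (1), my plan is to use exactly this characterisation. Given $n\ge 1$ and $0\le j\le nr$, I will write $j=qr+s$ with $0\le s<r$ and distribute: take $c_r=q$ of the generators equal to $a+r$, one copy of $a+s$, and $n-q-1$ copies of $a$ (when $j<nr$, so $q<n$), or simply $c_r=n$ when $j=nr$. This gives $n$ summands totaling $na+j$, proving $na+j\in H_{a,r}$. Part (1) is therefore pure bookkeeping.

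Part (2) follows from (1) by a direct reduction. Given $b\ge ua$, write $b=(u+q)a+s$ with $0\le s\le a-1$ and $q\ge 0$; it suffices to verify $s\le (u+q)r$. Because $a\le ur+1$, we have $s\le a-1\le ur\le (u+q)r$, and (1) yields $b\in H_{a,r}$, hence $ua\ge c$.

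Part (3) is the delicate one, and I expect it to be the main step: I need to rule out membership of $ua+j$ for $ur<j\le a-1$ under the assumption $a>ur+1$. Using the characterisation above, suppose $ua+j=na+m$ with $0\le m\le nr$, and split into three cases according to $n$ versus $u$. If $n=u$, then $j=m\le ur$, contradicting $j>ur$. If $n>u$, then $j=m+(n-u)a\ge a$, contradicting $j\le a-1$. If $n<u$, then $(u-n)a=m-j$; but $m\le nr\le (u-1)r<a$ (using $a>ur+1$) while $(u-n)a\ge a$, and moreover $m\le nr<ur<j$ forces $m-j<0<(u-n)a$, a contradiction. Taking $j=a-1$ (which satisfies $ur<j$ precisely because $a>ur+1$), the ``in particular'' assertion is immediate from the definition of the conductor: $ua+(a-1)\notin H_{a,r}$ forces $c>ua+(a-1)$. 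The only obstacle I foresee is keeping the inequalities between $m$, $j$, $nr$, and $a$ straight in the $n<u$ case; the hypothesis $a>ur+1$ is used precisely there.
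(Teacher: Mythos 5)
Your proof is correct and follows essentially the same approach as the paper's: part (1) via Euclidean division $j=qr+s$ and explicit distribution among the generators, part (2) by reducing any $b\ge ua$ to the form $(u+q)a+s$ with $s\le a-1\le ur$, and part (3) by writing a hypothetical representation as $na+m$ with $0\le m\le nr$ and deriving a contradiction (the paper phrases this via $n=\sum l_i$ and $m=\sum il_i$ rather than an explicit case split on $n$ versus $u$, but it is the same computation).
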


\begin{proof}
(1) We have $j = qr + i$ for some integers $q$ and $0 \le i \le r-1$.	
Then $0 \le q \le n$ since $0 \le j \le nr$.
If $i = 0$ (i.e., $q= n$), then we get $na + j = n(a+r) \in H_{a, r}$.
If $i > 0$ (i.e., $q < n$), then we also get $na + qr + i = (n-q-1)a + q(a+r)+ (a+i) \in H_{a,r}$.

(2) Note that $a \le ur + 1$ if and only if $ua + ur \ge ua + (a-1)$.
Then (1) and the assumption yield that $ua + j \in H_{a,r}$ for all $0 \le j \le a-1$.
Since any integer $b \ge ua$ can be written as $b = ma + j$ for some $m \ge n$ and $0 \le j \le a-1$, we get $b = (m-n)a + (na + j) \in H_{a,r}$.

(3) Similarly as above, $a > ur + 1$ if and only if $ua + ur < ua + (a-1)$.
Assume $ua+j \in H_{a,r}$ and we seek a contradiction.
Then $ua+j = \sum_{i=0}^r l_i (a+i)$ for some integers $l_i \ge 0$.
Since $j \le a-1$, we have $(u+1)a > ua+j = \sum_{i=0}^r l_i (a+i) \ge (\sum_{i=0}^r l_i) a$.
Therefore, $\sum_{i=0}^r l_i \le u$ and hence $\sum_{i=0}^r l_i \cdot i \le ur < j$ hold.
Then we conclude $u = \sum_{i=0}^r l_i$ and $j = \sum_{i=0}^r l_i \cdot i$, which is a desired contradiction.
\end{proof}

\begin{prop} \label{num1}
Let $c$ be the conductor of $H_{a,r}$, where $a, r \ge 1$.
Then one has $c=\lceil \frac{a-1}{r} \rceil \cdot a$.
\end{prop}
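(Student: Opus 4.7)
The plan is to pin down $c$ from above and below by combining parts (2) and (3) of Lemma \ref{numlem} applied with suitably chosen multipliers. Set $u=\lceil\frac{a-1}{r}\rceil$; the goal is to prove $c=ua$.

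First I would dispose of the degenerate case $a=1$: then $1\in H_{1,r}$, so $H_{1,r}=\N$, giving $c=0$, which agrees with $\lceil 0/r\rceil\cdot 1=0$. From now on assume $a\ge 2$, so $u\ge 1$. Unpacking the ceiling, $u-1<\frac{a-1}{r}\le u$, which (for integers) is equivalent to the two-sided bound
$$
(u-1)r+1<a\le ur+1.
$$
These are exactly the hypotheses needed to feed Lemma \ref{numlem}(2) and Lemma \ref{numlem}(3).

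For the upper bound $c\le ua$, the right-hand inequality $a\le ur+1$ is the hypothesis of Lemma \ref{numlem}(2) taken with its parameter equal to our $u$. That lemma immediately yields $b\in H_{a,r}$ for every $b\ge ua$, so $c\le ua$. For the lower bound $c\ge ua$, I would invoke Lemma \ref{numlem}(3) with its $u$ replaced by $u-1$: the required hypothesis is $a>(u-1)r+1$, which is exactly the left-hand inequality above. The conclusion gives $(u-1)a+j\notin H_{a,r}$ for every $(u-1)r<j\le a-1$, and in particular, since $a-1>(u-1)r$ is precisely our left-hand bound, the choice $j=a-1$ is legal and produces $ua-1\notin H_{a,r}$. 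Hence $c>ua-1$, i.e. $c\ge ua$, and the two bounds combine to $c=ua=\lceil\frac{a-1}{r}\rceil\cdot a$.

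There is no serious obstacle here; the entire argument is just careful ceiling-function bookkeeping. The only point that needs a second glance is to make sure the same inequality $(u-1)r+1<a$ simultaneously (i) certifies that Lemma \ref{numlem}(3) applies at level $u-1$ and (ii) places $j=a-1$ strictly above $(u-1)r$ so that the exclusion produced by that lemma really hits $ua-1$.
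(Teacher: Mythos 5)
Your proof is correct and follows essentially the same route as the paper: set $u=\lceil\frac{a-1}{r}\rceil$, derive $(u-1)r+1<a\le ur+1$, and apply Lemma \ref{numlem}(2) at level $u$ for the upper bound and Lemma \ref{numlem}(3) at level $u-1$ for the lower bound. Your write-up is in fact slightly more careful than the paper's (which contains a small typo, writing $(u-1)a+1<a\le ua+1$ where $(u-1)r+1<a\le ur+1$ is intended), and your separate treatment of the degenerate case $a=1$, where $u=0$, is a sensible precaution.
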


\begin{proof}
Set $u=\lceil \frac{a-1}{r} \rceil$.
Then there are inequalities $(u-1)a + 1 < a \le ua +1$.
Therefore, it follows from (2) and (3) of Lemma \ref{numlem} that the desired equality $c = ua$ holds true.
\end{proof}

Let $H$ be a numerical semigroup.
Let $B=k[t]$ be a polynomial ring over a field $k$.
Take the subring $A=k[t^h|h\in H]$ of $B$ and the ideal $I=(t^h|h\in H)$ of $A$.
We denote by $k[\![H]\!]$ the $I$-adic completion of $A$, and call it the {\em numerical semigroup ring} of $H$.
Note that the formal power series ring $k[\![t]\!]$ is the integral closure of $k[\![H]\!]$.
We establish two lemmas on numerical semigroup rings to deduce our next proposition.

\begin{lem}\label{20}
Let $R=k[\![H]\!]$ be a numerical semigroup ring over a field $k$ with integral closure $S=k[\![t]\!]$.
Denote by $\m$ the maximal ideal of $R$.
Then the following two statements hold true.
\begin{enumerate}[\rm(1)]
\item
For an ideal $I$ of $R$, one has $IS=I$ if and only if $I= t^aS$ for some integer $a\ge0$.
\item
If $H=\langle a_1, a_2, \ldots, a_r \rangle$ with $a_1 < a_2 < \cdots < a_r$, then $\m^nS= t^{na_1}S$.
\end{enumerate}
\end{lem}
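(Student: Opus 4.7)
The plan is to dispatch both parts by reducing to the fact that the integral closure $S = k[\![t]\!]$ is a discrete valuation ring with uniformizer $t$, so its ideals are classified as the powers $t^aS$ for $a\ge 0$.

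For part (1), the backward implication is essentially tautological: if $I = t^aS$, then multiplication by $S$ leaves $I$ unchanged, so $IS = I$. For the forward direction, I would exploit the observation that the equality $IS = I$ says exactly that $I$, a priori only an $R$-submodule of $S$, is actually closed under multiplication by $S$; in other words, $I$ is an ideal of $S$ sitting inside $R$. The DVR structure of $S$ then forces $I = t^aS$ for some integer $a \ge 0$ (the zero-ideal case being trivial or obtained formally by letting $a = \infty$).

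For part (2), the key observation is that $\m = (t^{a_1}, \dots, t^{a_r})$ as an ideal of $R$, so $\m^n$ is generated over $R$ by the monomial products $t^{a_{i_1} + \cdots + a_{i_n}}$ with $i_1, \dots, i_n \in \{1, \dots, r\}$. Since $a_1 \le a_i$ for each $i$, every such exponent is bounded below by $na_1$, and the minimum is attained by taking every index equal to $1$. Passing to $S$, the ideal $\m^n S$ is generated by these same monomials, and in a DVR an ideal is determined by the minimum $t$-adic valuation among its generators. This yields $\m^n S = t^{na_1}S$ at once.

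I do not expect any real obstacle; the lemma is a bookkeeping exercise. The one subtle point worth highlighting is the reinterpretation in part (1) of the condition $IS = I$ as saying that $I$ is simultaneously an ideal of $R$ and of $S$—after that, everything is immediate from the classification of ideals in the DVR $S$, and part (2) is just a matter of tracking minimum exponents among monomial generators.
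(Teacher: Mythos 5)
Your proposal is correct and follows essentially the same route as the paper: the paper's proof of (1) also rests on the ideal structure of the DVR $S=k[\![t]\!]$ (it takes $a$ minimal with $t^a\in I$ and uses $IS=I$ to conclude $I=t^aS$), and it dismisses (2) as straightforward, which your minimum-valuation bookkeeping with the generators $t^{a_{i_1}+\cdots+a_{i_n}}$ supplies. The only point both you and the paper gloss over is the zero ideal, which you at least flag explicitly.
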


\begin{proof}
It is straightforward to verify the second assertion.
In what follows, we show the first assertion.
The ``if" part is clear.
To show the ``only if'' part, suppose $IS=I$.
Take $a$ to be the minimum integer $i$ with $t^i \in I$.
For any integer $n\ge0$, we have $t^{a+n} = t^a \cdot t^n \in IS=I$.
This means that $I= t^aS$ holds.
\end{proof}

\begin{lem} \label{num2}
Let $a$ and $r$ be positive integers.
Let $R=k[\![H_{a,r}]\!]$ be the numerical semigroup ring over a field $k$ with integral closure $S=k[\![t]\!]$.
Let $\m$ be the maximal ideal of $R$.
Then the equality $\m^nS= \m^n$ (or equivalently, $\m^n = t^{na}S$ by Lemma \ref{20}(2)) holds if and only if one has the inequality $n \ge\lceil \frac{a-1}{r} \rceil$.
\end{lem}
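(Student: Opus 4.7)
The plan is to use Lemma~\ref{20}(2) to reduce the statement to characterizing when $\m^n = t^{na}S$. Indeed, Lemma~\ref{20}(2) gives $\m^n S = t^{na}S$, and the inclusion $\m^n \subseteq \m^n S$ is automatic, so the desired equality $\m^n = \m^n S$ is equivalent to $t^{na}S \subseteq \m^n$.

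For the \emph{only if} direction, I would observe that $t^{na}S \subseteq \m^n \subseteq R$ forces $t^{na+i} \in R$ for every $i \ge 0$, equivalently $na + i \in H_{a,r}$ for every $i \ge 0$. Thus $na$ is at least the conductor $c$ of $H_{a,r}$. By Proposition~\ref{num1} we have $c = \lceil \tfrac{a-1}{r} \rceil \cdot a$, which gives $n \ge \lceil \tfrac{a-1}{r} \rceil$.

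For the \emph{if} direction, set $u = \lceil \tfrac{a-1}{r} \rceil$ and assume $n \ge u$, so that $nr \ge a-1$. The heart of the argument is a regrouping trick. An arbitrary element of $t^{na}S$ has the form $f = \sum_{i \ge 0} b_i\, t^{na+i}$ with $b_i \in k$. Writing $i = qa + s$ with $0 \le s \le a-1$ and $q \ge 0$, we reorganize
$$
f \;=\; \sum_{s=0}^{a-1} t^{na+s} \Bigl(\sum_{q \ge 0} b_{qa+s}\, t^{qa}\Bigr),
$$
where each inner factor is a formal power series in $t^a$ and hence lies in $k[\![t^a]\!] \subseteq R$. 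On the other hand, for each $s$ with $0 \le s \le a-1 \le nr$ we may write $s = c_1 + \cdots + c_n$ with $c_j \in \{0,1,\dots,r\}$, so that $t^{na+s} = \prod_j t^{a+c_j}$ is a product of generators of $\m$, and therefore a generator of $\m^n$. Hence $f$ is a \emph{finite} $R$-linear combination of generators of $\m^n$, proving $t^{na}S \subseteq \m^n$.

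The main obstacle I anticipate is handling the infinite sums implicit in the inclusion $t^{na}S \subseteq S$ without invoking topological closedness of $\m^n$ inside $R$; the regrouping step above is designed precisely to sidestep this issue by turning an a priori infinite $k$-linear combination into a finite $R$-linear combination. Once this is done, the combinatorial verification that each $t^{na+s}$ with $0 \le s \le a-1$ really is a generator of $\m^n$ uses only the inequality $nr \ge a-1$ and is immediate.
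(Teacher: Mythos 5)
Your proof is correct and follows the same overall line as the paper's: reduce via Lemma~\ref{20}(2) to deciding when $t^{na}S\subseteq\m^n$, handle the forward direction by comparison with the conductor, and handle the reverse direction by writing $na+j$ (for $0\le j\le a-1\le nr$) as a sum of $n$ generators of $H_{a,r}$. The one place you genuinely add something is the regrouping step in the ``if'' direction. The paper shows each monomial $t^b$ with $b\ge na$ lies in $\m^n$ and then asserts $t^{na}S\subseteq\m^n$, passing silently from monomials to arbitrary power series --- a step that a priori requires $\m^n$ to be closed in the $t$-adic topology (true, since $R$ is complete local and $\m^n$ is a finitely generated ideal, but left unremarked). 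Your decomposition
$$
\sum_{i\ge0}b_i\,t^{na+i}\;=\;\sum_{s=0}^{a-1}t^{na+s}\Bigl(\sum_{q\ge0}b_{qa+s}\,t^{qa}\Bigr),
$$
with each inner factor lying in $k[\![t^a]\!]\subseteq R$ and each $t^{na+s}\in\m^n$, turns the infinite $k$-linear combination into a finite $R$-linear combination of elements of $\m^n$, so no topological input is needed. The ``only if'' direction is the same in substance: you invoke Proposition~\ref{num1} (the conductor of $H_{a,r}$ is $\lceil\tfrac{a-1}{r}\rceil a$) where the paper instead cites Lemma~\ref{numlem}(3) directly to exhibit $t^{na+a-1}\notin R$ when $n<\lceil\tfrac{a-1}{r}\rceil$; since Proposition~\ref{num1} is itself deduced from Lemma~\ref{numlem}, the content is identical.
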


\begin{proof}
Set $u=\lceil \frac{a-1}{r} \rceil$.
Assume $n < u$.
Then $nr + 1 < a$ since $n \le u-1 <\lceil \frac{a-1}{r}\rceil$.
By Lemma \ref{numlem}(3), we have $na + a -1 \not\in H$.
Therefore, we get $t^{na+a-1} \in t^{na}S\setminus R \subseteq t^{na}S\setminus \m^n$.
Hence $\m^n \ne t^{na}S$.

Next consider the case of $n \ge u$.
In this case, $nr + 1 \ge a$ holds.
For any integer $0 \le j \le a-1 \le nr$, $na + j$ is the sum of $n$ elements of $H$ by the proof of Lemma \ref{numlem}(1).
Since every integer $b \ge na$ is of the form $b = ma + j$ for some $m \ge n$ and $0 \le j \le a-1$, it is the sum of $m$ elements of $H$.
Therefore, we conclude $t^b \in \m^m$ for all integers $b \ge na$.
Hence the equality $\m^n = t^{na}S$ holds.
\end{proof}

Now the proposition below is deduced; it is a direct consequence of Proposition \ref{num1} and Lemma \ref{num2}.
This proposition especially says that the conductor of the numerical semigroup ring of $H_{a,r}$ is given by a power of the maximal ideal, which plays an essential role in the proof of our next theorem.

\begin{prop}\label{cond}
Let $a,r$ be positive integers.
Let $R=k[\![H_{a,r}]\!]$ be the numerical semigroup ring over a field $k$ with integral closure $S=k[\![t]\!]$.
Let $\c$ be the conductor of $R$.
Then $\c=\m^u = t^{ua}S$, where $u=\lceil\frac{a-1}{r}\rceil$.
\end{prop}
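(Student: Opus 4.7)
The plan is to combine the two preceding results in this section, Proposition \ref{num1} and Lemma \ref{num2}, with the standard identification of the ring-theoretic conductor $\c = R:_Q S$ in terms of the numerical semigroup conductor of $H_{a,r}$. Setting $u = \lceil \frac{a-1}{r}\rceil$, Proposition \ref{num1} tells us that the conductor of the semigroup $H_{a,r}$, i.e., the smallest integer $c$ such that every integer $\ge c$ belongs to $H_{a,r}$, equals $ua$. On the other hand, Lemma \ref{num2} tells us precisely that $\m^u = t^{ua}S$. So the job is to show $\c = t^{ua}S$ and then the chain of equalities is immediate.

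First I would identify the ring conductor as an $S$-ideal of the form $t^n S$ for some $n \ge 0$. Indeed $\c$ is by definition an ideal of $S$ contained in $R$, hence in particular $\c S = \c$, so Lemma \ref{20}(1) gives $\c = t^n S$ for some $n \ge 0$. Next I would characterize this $n$ as the semigroup conductor of $H_{a,r}$: the inclusion $t^n S \subseteq R$ is equivalent to $t^{n+i} \in R$ for all $i \ge 0$, that is, $n + i \in H_{a,r}$ for all $i \ge 0$; since $\c$ is the largest such $S$-ideal inside $R$, the integer $n$ must be the smallest one with this property, namely the numerical semigroup conductor $c$ of $H_{a,r}$. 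Now invoke Proposition \ref{num1} to conclude $\c = t^{ua}S$.

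Finally, Lemma \ref{num2} with $n = u$ gives $\m^u = t^{ua}S$, so stringing the equalities together yields $\c = \m^u = t^{ua}S$, as desired.

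There is no real obstacle here; the only mildly delicate point is the identification $\c = t^{c}S$ where $c$ is the semigroup conductor, but this follows directly from Lemma \ref{20}(1) and the maximality characterization of the conductor. All the arithmetic of powers of $t$ versus powers of $\m$ has already been done in Lemma \ref{num2}, and the computation of the numerical conductor has been carried out in Proposition \ref{num1}, so the proof reduces to a two-line assembly.
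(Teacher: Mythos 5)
Your proof is correct and takes essentially the same route as the paper, which simply deduces the proposition as a direct consequence of Proposition \ref{num1} and Lemma \ref{num2}; your extra step identifying $\c=t^cS$ with $c$ the semigroup conductor (via Lemma \ref{20}(1) and maximality) is exactly the background fact the paper leaves implicit.
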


The following theorem is one of the main results of this section, which yields complete classification of the IKE-closed subcategories of the module category of the numerical semigroup ring of $H_{a,r}$.

\begin{thm}\label{16}
Let $a\ge2,\,r\ge1$ and $H=H_{a,r}=\langle a,a+1,\dots,a+r\rangle$.
Let $R=k[\![H]\!]$, where $k$ is an infinite field.
Let $\c$ be the conductor of $R$.
Then the following statements hold.
\begin{enumerate}[\rm(1)]
\item
There are equalities $\ext_RR/\c=\fl R$ and $\ext_R\c=\tf R$.
\item
The IKE-closed subcategories of $\mod R$ are $0,\,\fl R,\,\tf R,\,\mod R$.
In particular, Question \ref{qIKE} has an affirmative answer for $R$.
\end{enumerate}
\end{thm}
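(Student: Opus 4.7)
The plan is to deduce part (1) from Theorem \ref{8} together with Proposition \ref{cond}, and then obtain part (2) directly from Proposition \ref{12}(2).

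First I observe that $R = k[\![H]\!]$ is a complete (hence excellent and henselian) one-dimensional local domain, with integral closure $S = k[\![t]\!]$ a DVR; in particular $R$ is Cohen--Macaulay. By Proposition \ref{cond}, the conductor satisfies $\c = \m^u$ for $u = \lceil (a-1)/r \rceil \ge 1$.

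The substantive step is to verify that $\gr_\m R$ has positive depth, so that Theorem \ref{8} applies. I would do this by proving that the initial form of $t^a$ in $(\gr_\m R)_1$ is a regular element, equivalently that $\m^{n+1} :_R t^a = \m^n$ for every $n \ge 0$. Translated combinatorially, this amounts to showing that if $b \in H$ and $a + b$ is a sum of $n+1$ elements of $\{a, a+1, \ldots, a+r\}$, then $b$ is a sum of at least $n$ such elements. Writing $a + b = (n+1)a + e$ with $0 \le e \le (n+1)r$, so that $b = na + e$, I would split into three cases on $e$: when $e \le nr$, Lemma \ref{numlem}(1) supplies a representation of $b$ with exactly $n$ summands; when $e \ge a$, rewriting $b = (n+1)a + (e-a)$ gives a representation with $n+1$ summands; the intermediate range $nr < e < a$ is ruled out because $na + e \notin H$ in this range, as one checks by considering any hypothetical expression $b = \sum_{i=0}^r c_i(a+i)$ and showing that each of the three possibilities $C < n$, $C = n$, $C > n$ for $C = \sum c_i$ leads to a direct contradiction using the constraints $e > nr$ and $e < a$.

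Granted positive depth of $\gr_\m R$, Theorem \ref{8} applied with $i = u$ gives $\ext_R R/\c = \fl R$ and $\ext_R \c = \tf_0 R$. Since $R$ is a one-dimensional local domain, every $R$-module is free at the unique minimal prime $(0)$, so $\tf_0 R = \tf R$, completing part (1). Part (2) then follows immediately from Proposition \ref{12}(2).

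The main obstacle will be the combinatorial argument for positive depth of $\gr_\m R$, specifically the exclusion of the range $nr < e < a$; this is where the consecutive structure of the generators of $H_{a,r}$ is exploited in an essential way, and where the assumption $a \ge 2,\ r \ge 1$ enters through the three-way case analysis on $C$.
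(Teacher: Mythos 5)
Your proposal is correct and follows the same overall skeleton as the paper's proof — Proposition \ref{cond} to identify $\c$ with a power of $\m$, Theorem \ref{8} to compute both extension closures (using $\tf_0 R = \tf R$ since $R$ is a domain), and Proposition \ref{12}(2) for part (2). The genuine difference is in how you verify the hypothesis of Theorem \ref{8} that $\gr_\m R$ has positive depth. The paper simply cites Molinelli--Tamone \cite[Proposition 1.1]{MT}: since $a, a+1, \dots, a+r$ is an arithmetic sequence, $\gr_\m R$ is Cohen--Macaulay, hence of depth $1 > 0$. You instead give a direct, self-contained combinatorial argument that the initial form of $t^a$ is a regular element of $\gr_\m R$, i.e., that $\m^{n+1} :_R t^a = \m^n$ for all $n$, by writing a generator exponent as $na+e$ with $0 \le e \le (n+1)r$ and splitting on whether $e \le nr$, $e \ge a$, or $nr < e < a$; the last case is excluded by showing $na+e \notin H_{a,r}$ via a three-way analysis on $C = \sum c_i$. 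I checked the case analysis and it is sound: $C<n$ forces $e < nr$, $C=n$ forces $e \le nr$, and $C>n$ forces $e \ge a$, each contradicting $nr < e < a$. (One small point you elide: $t^{a+b}\in\m^{n+1}$ means $a+b$ is a sum of \emph{at least} $n+1$ generators rather than exactly $n+1$, but applying your statement to the exact number of summands in such a representation immediately yields what is needed, so this is harmless.) The trade-off is that the paper's route is shorter but relies on an external result, whereas yours is longer but self-contained; both are valid, and your version is a reasonable way to avoid the Molinelli--Tamone citation.
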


\begin{proof}
(1) We may assume $r\le a-1$.
Note then that $\gcd(a,a+1,\dots,a+r)=1$.
Proposition \ref{cond} implies $\c=\m^t$ for some $t>0$.
As $a,a+1,\dots,a+r$ is an arithmetic sequence, $\gr_\m R$ is Cohen--Macaulay by \cite[Proposition 1.1]{MT}.
Hence $\depth(\gr_\m R)=\dim(\gr_\m R)=\dim R=1>0$.
It follows from Theorem \ref{8} that $\ext_RR/\m^t=\fl R$ and $\ext_R\m^t=\tf_0R=\tf R$.
Now the assertion follows.

(2) The assertion is an immediate consequence of (1) and Proposition \ref{12}(2).
\end{proof}

To obtain one more theorem, we prove the general proposition below concerning extension closures.

\begin{prop}\label{5}
Let $A$ be a ring.
Let $R=A[x,y]/(x^a\pm y^b)$ be a quotient of a polynomial ring over $A$.
Let $I=(x^{a_1}y^{b_1},\dots,x^{a_n}y^{b_n})$ be an ideal of $R$, where $a>a_1>\cdots>a_n=0$ and $0=b_1<\cdots<b_n<b$ with $n\ge2$.
Then the $R$-module $R/(x,y)$ belongs to $\ext_RR/I$.
\end{prop}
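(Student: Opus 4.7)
The plan is to induct on $n$.

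\textbf{Base case ($n=2$).} Here $I=(x^{a_1},y^{b_2})$, and I proceed in two stages by applying Lemma~\ref{4} in auxiliary quotient rings. First set $\bar R:=R/(y^{b_2})$; since $b>b_2$ forces $y^b\in(y^{b_2})$ and hence $x^a=\mp y^b=0$ in $\bar R$, we have $\bar R=A[x,y]/(x^a,y^{b_2})$. Using the $A$-basis $\{x^iy^j:0\le i<a,\,0\le j<b_2\}$ of $\bar R$ one verifies $0:_{\bar R}x=(x^{a-1})$. Since $a_1<a$, the containment $0:_{\bar R}x\subseteq(x^m)$ holds for every $1\le m\le a_1$, so Lemma~\ref{4} applied in $\bar R$ (with element $x$ and ideal $(x^{m-1})$) yields $\bar R/(x^{m-1})\in\ext_{\bar R}\bar R/(x^m)$. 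By Remark~\ref{7}(2) and transitivity of extension closures, chaining these relations from $m=a_1$ down to $m=2$ gives $\bar R/(x)=A[y]/(y^{b_2})\in\ext_R\bar R/(x^{a_1})=\ext_R R/I$. Next set $\tilde R:=R/(x)=A[y]/(y^b)$; an analogous computation gives $0:_{\tilde R}y=(y^{b-1})$, and since $b_2<b$, Lemma~\ref{4} in $\tilde R$ produces $\tilde R/(y^{m-1})\in\ext_{\tilde R}\tilde R/(y^m)$ for $1\le m\le b_2$. Chaining from $m=b_2$ down to $m=2$ yields $R/(x,y)=\tilde R/(y)\in\ext_R\tilde R/(y^{b_2})=\ext_R A[y]/(y^{b_2})\subseteq\ext_R R/I$.

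\textbf{Inductive step ($n\ge3$).} Let $\bar R:=R/(y^{b_n})=A[x,y]/(x^a,y^{b_n})$; the image of $I$ in $\bar R$ is $\bar I=(x^{a_1},x^{a_2}y^{b_2},\dots,x^{a_{n-1}}y^{b_{n-1}})$. Since $a_{n-1}\ge1$, every generator of $\bar I$ is divisible by $x$, so define for $0\le k\le a_{n-1}$
$$\bar J_k:=(x^{a_1-k},\,x^{a_2-k}y^{b_2},\,\dots,\,x^{a_{n-1}-k}y^{b_{n-1}}),$$
making $\bar J_0=\bar I$ and $\bar J_{k-1}=x\bar J_k$. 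The Lemma~\ref{4} hypothesis $0:_{\bar R}x=(x^{a-1})\subseteq\bar J_{k-1}$ holds at each step, because the smallest $x$-exponent appearing in a generator of $\bar J_{k-1}$ is $a_1-k+1\le a_1\le a-1$. Hence $\bar R/\bar J_k\in\ext_{\bar R}\bar R/\bar J_{k-1}$ for every $k$, and by transitivity $\bar R/\bar J_{a_{n-1}}\in\ext_{\bar R}\bar R/\bar I$. At $k=a_{n-1}$ the last generator of $\bar J_k$ becomes $y^{b_{n-1}}$, which makes $y^{b_n}=0$ redundant, so $\bar R/\bar J_{a_{n-1}}\cong R/I'$ as $R$-modules, where
$$I':=(x^{a_1-a_{n-1}},\,x^{a_2-a_{n-1}}y^{b_2},\,\dots,\,x^{a_{n-2}-a_{n-1}}y^{b_{n-2}},\,y^{b_{n-1}}).$$
This $I'$ has $n-1$ generators and satisfies the proposition's hypotheses (the strict inequalities among exponents are preserved, $a_1-a_{n-1}<a$, and $b_{n-1}<b$). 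Remark~\ref{7}(2) and the inductive hypothesis applied to $I'$ then give $R/(x,y)\in\ext_R R/I'\subseteq\ext_R R/I$.

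The substantive content is the choice of iteration scheme. The key observation is that passing to $\bar R=R/(y^{b_n})$ forces $x^a=0$, which pins down $0:_{\bar R}x=(x^{a-1})$, and the hypothesis $a_1<a$ is precisely what makes this annihilator fit inside the ideals $\bar J_{k-1}$ so that Lemma~\ref{4} can be applied at each step. In the base case, an analogous second reduction in $\tilde R=R/(x)$ handles the remaining power of $y$; in the inductive step, the single $x$-reduction already drops the generator count of $I$ by one, enabling the induction.
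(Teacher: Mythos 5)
Your proof is correct and takes essentially the same route as the paper: pass to $\bar R = R/(y^{b_n})$ where $0:_{\bar R}x=(x^{a-1})$, iterate Lemma \ref{4} to divide the $x$-exponents until $y^{b_{n-1}}$ appears (making $y^{b_n}$ redundant and dropping the generator count by one), and in the two-generator case finish with a symmetric $y$-reduction in $R/(x)$. The only difference is cosmetic: you package the reduction as a formal induction on $n$, while the paper phrases it as repeated iteration.
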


\begin{proof}
We have $I=(x^{a_1},x^{a_2}y^{b_2},\dots,x^{a_{n-1}}y^{b_{n-1}},y^{b_n})$.
Since $a_{n-1}>a_n=0$, we can define the ideal $J=(x^{a_1-1},x^{a_2-1}y^{b_2},\dots,x^{a_{n-1}-1}y^{b_{n-1}})$ of $R$ and have $I=xJ+(y^{b_n})$.
There is an isomorphism $R/(y^{b_n})\cong A[x,y]/(x^a\pm y^b,y^{b_n})=A[x,y]/(x^a,y^{b_n})$ since $b>b_n$.
Hence $0:_{R/(y^{b_n})}x=x^{a-1}(R/(y^{b_n}))$, which is contained in $I(R/(y^{b_n}))= xJ(R/(y^{b_n}))$ as $a-1\ge a_1$.
It follows from Lemma \ref{4} that $R/(J+(y^{b_n}))\in\ext_{R/(y^{b_n})}R/I$.
By Remark \ref{7}(2), we have $R/(J+(y^{b_n}))\in\ext_RR/I$.
Note that
$$
J+(y^{b_n})=(x^{a_1-1},x^{a_2-1}y^{b_2},\dots,x^{a_{n-1}-1}y^{b_{n-1}},y^{b_n}),\qquad a>a_1-1>a_2-1>\cdots>a_{n-1}-1.
$$
If either ($n=2$ and $a_1-1>1$) or ($n\ge3$ and $a_{n-1}-1>0$), then we can apply the same argument to get $R/K\in\ext_RR/(J+(y^{b_n}))\subseteq\ext_RR/I$, where $K:=(x^{a_1-2},x^{a_2-2}y^{b_2},\dots,x^{a_{n-1}-2}y^{b_{n-1}},y^{b_n})$.
Iterating this procedure, we obtain $R/L\in\ext_RR/I$, where $L=(x,y^{b_2})$ if $n=2$, and
$$
\begin{array}{l}
L=(x^{a_1-a_{n-1}},x^{a_2-a_{n-1}}y^{b_2},\dots,x^{a_{n-2}-a_{n-1}}y^{b_{n-2}},y^{b_{n-1}},y^{b_n})\\
\phantom{L}=(x^{a_1-a_{n-1}},x^{a_2-a_{n-1}}y^{b_2},\dots,x^{a_{n-2}-a_{n-1}}y^{b_{n-2}},y^{b_{n-1}})
\end{array}
$$
if $n\ge3$; the last equality holds since $b_{n-1}<b_n$.

When $n=2$, we replace $x$ with $y$ in the above argument on the ideal $I=(x^{a_1},y^{b_2})$.
Applying it to the ideal $L=(x,y^{b_2})$ and using the assumption $b-1\ge b_2$, we obtain $R/(x,y)\in\ext_RR/L\subseteq\ext_RR/I$.

Let us consider the case where $n\ge3$.
We then have $a>a_1-a_{n-1}>a_2-a_{n-1}>\cdots>a_{n-2}-a_{n-1}>0$.
Applying the above argument on $I$ to $L$, we see that $R/M\in\ext_RR/L\subseteq\ext_RR/I$, where $M=(x^{a_1-a_{n-2}},x^{a_2-a_{n-2}}y^{b_2},\dots,x^{a_{n-3}-a_{n-2}}y^{b_{n-3}},y^{b_{n-2}})$.
Repeating this, we finally obtain $R/(x^{a_1-a_2},y^{b_n})\in\ext_RR/I$.
The above argument in the case $n=2$ deduces the containment $R/(x,y)\in\ext_RR/I$.
\end{proof}

Now we can prove the following theorem, which is another main result of this section.
This theorem completely classifies the IKE-closed subcategories of the module category of the numerical semigroup ring of a numerical semigroup minimally generated by two elements.

\begin{thm}\label{17}
Let $a>b>0$ be integers with $\gcd(a,b)=1$.
Let $H=\langle a,b\rangle$ be a numerical semigroup, and let $R=k[\![H]\!]$ be the numerical semigroup ring of $H$ over a field $k$.
Let $\m$ be the maximal ideal of $R$ and $\c$ the conductor of $R$.
Then the following statements hold.
\begin{enumerate}[\rm(1)]
\item
There are equalities $\ext_RR/\c=\fl R$ and $\ext_R\c=\tf R$.
\item
The IKE-closed subcategories of $\mod R$ are $0,\,\fl R,\,\tf R,\,\mod R$.
In particular, Question \ref{qIKE} has an affirmative answer for $R$.
\end{enumerate}
\end{thm}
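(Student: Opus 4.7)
My plan is to mirror the proof of Theorem \ref{16}, replacing Theorem \ref{8} by Proposition \ref{5} at the key step. Since a complete local ring is automatically excellent and henselian, assertion (2) will be an immediate consequence of (1) and Proposition \ref{12}(2), so I focus on (1). I may assume $b\ge 2$; if $b=1$ then $R=k[\![t]\!]$ is a DVR with $\c=R$ and the theorem is trivial.

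The first move is to present $R$ as $k[\![x,y]\!]/(x^b-y^a)$ by setting $x=t^a$ and $y=t^b$, so that $R$ fits the form $A[x,y]/(x^\alpha-y^\beta)$ of Proposition \ref{5} with $\alpha=b$ and $\beta=a$. The conductor of $H=\langle a,b\rangle$ as a numerical semigroup equals $c:=(a-1)(b-1)$, and thus $\c=t^c k[\![t]\!]$. Because $\m=(x,y)$ has $t$-order $b$ and every integer $\ge c$ lies in $H$, I will argue that, as an $R$-ideal, $\c$ is minimally generated by the $b$ elements $t^c,t^{c+1},\dots,t^{c+b-1}$.

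The main technical step is to realize these $b$ generators as monomials $x^{A_i}y^{B_i}$ in the shape demanded by Proposition \ref{5}. Since $\gcd(a,b)=1$, each $j\in[c,c+b-1]$ admits a unique representation $t^j=x^Ay^B$ with $0\le A<b$, and the map $j\mapsto A$ is a bijection onto $\{0,1,\dots,b-1\}$. Setting $A_i=b-i$ (and letting $j_i,B_i$ be the corresponding data), I will verify that (i) $A_1=b-1<b$; (ii) $B_1=0$, since $A_1=b-1$ forces $j_1=a(b-1)=c+b-1$; (iii) $B_b<a$, from $bB_b=j_b\le c+b-1=ab-a$; and (iv) $B_1<B_2<\cdots<B_b$, which follows from the identity $b(B_{i+1}-B_i)=(j_{i+1}-j_i)+a$ together with $j_{i+1}-j_i\ge -(b-1)>-a$. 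Step (iv) uses crucially the hypothesis $a>b$, and is the main technical obstacle.

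With the hypotheses of Proposition \ref{5} in place, I obtain $R/\m\in\ext_R R/\c$. Remark \ref{7}(1) then gives $\fl R=\ext_R k\subseteq\ext_R R/\c$, and the reverse containment is automatic because $R/\c$ has finite length; so $\ext_R R/\c=\fl R$. Since $R$ is a one-dimensional Cohen--Macaulay local domain and $\c$ is a nonzero ideal (hence generically free of rank one), Lemma \ref{14}(2) yields $\ext_R\c=\tf_0R=\tf R$, proving (1). Assertion (2) then follows from Proposition \ref{12}(2).
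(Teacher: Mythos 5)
Your proof follows the same overall strategy as the paper's: present $R$ as a power-series hypersurface, exhibit $\c$ via monomial generators of the shape required by Proposition \ref{5}, deduce $k\in\ext_RR/\c$, then finish with Remark \ref{7}(1), Lemma \ref{14}(2), and Proposition \ref{12}(2). Where you differ is in how the required monomial presentation of $\c$ is verified. The paper takes a minimal monomial generating set and argues abstractly from minimality that the exponents can be arranged with the $x$-exponents strictly decreasing to $0$ and the $y$-exponents strictly increasing from $0$, bounded by $a$ and $b$ respectively. You instead set $x=t^a$, $y=t^b$ (so $R=k[\![x,y]\!]/(x^b-y^a)$, swapping the roles of $a$ and $b$ relative to Proposition \ref{5}) and compute the exponents explicitly: for $j\in[c,c+b-1]$ the $x$-exponent $A$ of $t^j$ runs bijectively over $\{0,\dots,b-1\}$, and the recurrence $b(B_{i+1}-B_i)=(j_{i+1}-j_i)+a$ together with $|j_{i+1}-j_i|\le b-1<a$ forces the $y$-exponents to increase strictly and to lie in $[0,a-1]$. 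This is a correct and somewhat more concrete alternative to the paper's minimality argument, and it makes transparent exactly where the hypothesis $a>b$ enters. Two small remarks. First, Proposition \ref{5} is stated for the polynomial hypersurface $A[x,y]/(x^a\pm y^b)$, so (as the paper does) you should add a word about passing through the $(x,y)$-adic completion before concluding $k\in\ext_RR/\c$. Second, your dispatch of the $b=1$ case is not right: there $\c=R$, so $\ext_RR/\c=\ext_R0=0\ne\fl R$ and assertion (1) fails literally. The theorem tacitly assumes $H\ne\N$, i.e.\ $b\ge2$ — as you do in the body of your argument, and as the paper does implicitly via the requirement $n\ge2$ in its claim.
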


\begin{proof}
(1) Let $S=k[\![t]\!]$ be a formal power series ring, which is equal to the integral closure of $R=k[\![t^a,t^b]\!]$.
Let $c$ be the conductor of the numerical semigroup $H$.
Then $c=(a-1)(b-1)$ and $\c=t^cS=(t^c,t^{c+1},\dots,t^{c+b-1})R$.
We identify $R$ with the quotient $k[\![x,y]\!]/(x^a-y^b)$ of a formal power series ring, so that $x=t^b$ and $y=t^a$ in $R$.
Take any integer $n$ with $c\le n\le c+b-1$.
Then there exist integers $p,q\ge0$ such that $n=ap+bq$.
Hence $t^n=(t^a)^p(t^b)^q=x^qy^p$.
Note that
$$
n\le c+b-1=a(b-1)+b\cdot0,\qquad
n\le c+b-1\le c+a-1=a\cdot0+b(a-1).
$$
We see that $0\le p\le b-1$ and $0\le q\le a-1$.

We claim that one can choose integers $a>a_1>\cdots>a_n=0$ and $0=b_1<\cdots<b_n<b$ with $n\ge2$ such that $\c=(x^{a_1}y^{b_1},\dots,x^{a_n}y^{b_n})R$.
Indeed, since $\c$ is a monomial ideal, there is a minimal system of generators $\{x^{a_i}y^{b_i}\}_{i=1}^n$.
If $b_{i} = b_{j}$ and $a_i \ge a_j$, then $x^{a_j}y^{b_{j}}$ divides $x^{a_{i}}y^{b_{i}}$, contradicting the minimality of $\{x^{a_i}y^{b_i}\}_{i=1}^n$.
Arranging the order of $\{b_i\}_{i=1}^n$, we may assume $b_1 < \cdots < b_n < b$.
If $a_{i+1} \ge a_{i}$, then $x^{a_i}y^{b_{i}}$ divides $x^{a_{i+1}}y^{b_{i+1}}$, again giving a contradiction.
Therefore, there are integers $a> a_1 > \cdots > a_n$ and $b_1 < \cdots < b_n < b$ such that $\c=(x^{a_1}y^{b_1},\dots,x^{a_n}y^{b_n})R$.
Moreover, we have $a_n = b_1 = 0$ as $\c$ is $\m$-primary.

Applying Proposition \ref{5} to $A=k,\,I=(x^{a_1}y^{b_1},\dots,x^{a_n}y^{b_n})$ and taking the $(x,y)$-adic completion, we see that $k\in\ext_RR/\c$.
The assertion follows from Remark \ref{7}(1) and Lemma \ref{14}(2).

(2) The assertion is an immediate consequence of (1) and Proposition \ref{12}(2).
\end{proof}

%%%%%%%%%%%%%%%%%%%%%%%%%%%%%%%%%%%%%%%%%
\begin{ac}
The authors thank Kazuho Ozeki for helpful discussion with the fourth author about associated graded rings in April, 2020.
The authors also thank Haruhisa Enomoto for useful information on IKE-closed subcategories.
\end{ac}
%%%%%%%%%%%%%%%%%%%%%%%%%%%%%%%%%%%%%%%%%%%%%%%%%%%%%%%%%%%%

%%%%%%%%%%%%%%%%%%%%%%%%%%%%%%%%%%%%%%%%%%%%%%%%%%%%%%%%%%%%
\end{document}